%
%
%
%
%
%
%

%
%
%
\pdfpagewidth=8.5truein
\pdfpageheight=11truein
%

\documentclass[11pt]{amsart}
\usepackage{geometry}
\usepackage{amssymb, amsmath, amsthm, color, tikz,enumerate,mathrsfs}
\usepackage{pdfpages}
\usepackage{hyperref}
\usepackage{textcomp}
\usepackage{color}
\usepackage{xcolor}
\usepackage{listings}
\usepackage{todonotes}

\geometry{letterpaper}

\usepackage{fullpage}

\newtheorem{lemma}{Lemma}[section]
\newtheorem{proposition}[lemma]{Proposition}
\newtheorem{theorem}[lemma]{Theorem}
\newtheorem{corollary}[lemma]{Corollary}
\newtheorem{conjecture}[lemma]{Conjecture}
\newtheorem{example}[lemma]{Example}
\newtheorem{remark}[lemma]{Remark}

\theoremstyle{definition}
\newtheorem{definition}[lemma]{Definition}

\makeatletter
\newcommand{\setword}[2]{%
  \phantomsection
  #1\def\@currentlabel{\unexpanded{#1}}\label{#2}%
}
\makeatother

\numberwithin{equation}{section}

\hyphenation{Ehrenborg}
\hyphenation{arrange-ments}

\newcommand{\Ball}{\mathbb{B}}
\newcommand{\Ccc}{\mathbb{C}}

\newcommand{\Rrr}{\mathbb{R}}
\newcommand{\Sss}{\mathbb{S}}

\newcommand{\Zzz}{\mathbb{Z}}
\newcommand{\R}{\Rrr}

\newcommand{\Z}{\Zzz}
\newcommand{\C}{\Ccc}
\newcommand{\bl}[2]{(#1,#2)}

\newcommand{\circdots}{\circ \cdots \circ}
\newcommand{\Hf}{\mathcal{H}}
\newcommand{\Tf}{\mathscr{T}}

\newcommand{\fl}{\longrightarrow}

\newcommand{\ungras}{1\!\!\mkern -1mu1}

\DeclareMathOperator{\im}{imult}
\DeclareMathOperator{\id}{id}
\DeclareMathOperator{\Span}{Span}
\DeclareMathOperator{\Vol}{Vol}
\DeclareMathOperator{\divv}{div}

\makeatletter
\@namedef{subjclassname@2020}{%
  \textup{2020} Mathematics Subject Classification}
\makeatother

\begin{document}

\title{Sharing pizza in n dimensions}

\author{Richard EHRENBORG, Sophie MOREL and Margaret READDY}

\address{Department of Mathematics, University of Kentucky, Lexington,
  KY 40506-0027, USA.\hfill\break \tt http://www.math.uky.edu/\~{}jrge/,
  richard.ehrenborg@uky.edu.}

\address{Department of Mathematics and Statistics, ENS de Lyon,
Unit\'e De Math\'ematiques Pures Et Appliqu\'ees,
69342 Lyon Cedex 07,
France.\hfill\break
\tt http://perso.ens-lyon.fr/sophie.morel/,
sophie.morel@ens-lyon.fr.}

\address{Department of Mathematics, University of Kentucky, Lexington,
  KY 40506-0027, USA.\hfill\break
\tt http://www.math.uky.edu/\~{}readdy/,
margaret.readdy@uky.edu.}

\subjclass[2020]
{Primary 51F15, 52C35, 51M20, 51M25; Secondary 26B15}

\date{\today.}

\begin{abstract}
We introduce and prove the $n$-dimensional Pizza Theorem: Let $\Hf$ be a hyperplane arrangement in~$\Rrr^{n}$. If $K$ is a measurable set of finite volume, the \emph{pizza quantity} of $K$ is the alternating sum of the volumes of the regions obtained by intersecting $K$ with the arrangement~$\Hf$. We prove that if $\Hf$ is a Coxeter arrangement different from $A_{1}^{n}$ such that the group of isometries $W$ generated by the reflections in the hyperplanes of $\Hf$ contains the map $-\id$, and if~$K$ is a translate of a convex body that is stable under $W$ and contains the origin, then the pizza quantity of~$K$ is equal to zero. Our main tool is an induction formula for the pizza quantity involving a subarrangement of the restricted arrangement on hyperplanes of $\Hf$ that we call the \emph{even restricted arrangement}. More generally, we prove that for a class of arrangements that we call \emph{even} (this includes the Coxeter arrangements above) and for a \emph{sufficiently symmetric} set $K$, the pizza quantity of $K+a$ is polynomial in $a$ for $a$ small enough, for example if $K$ is convex and $0\in K+a$. We get stronger results in the case of balls, more generally, convex bodies bounded by quadratic hypersurfaces. For example, we prove that the pizza quantity of the ball centered at $a$ having radius $R\geq\|a\|$ vanishes for a Coxeter arrangement $\Hf$ with $|\Hf|-n$ an even positive integer. We also prove the Pizza Theorem for the surface volume: When $\Hf$ is a Coxeter arrangement and  $|\Hf| - n$ is a nonnegative even integer, for an $n$-dimensional ball the alternating sum of the $(n-1)$-dimensional surface volumes of the regions is equal to zero.

\end{abstract}

\maketitle

\section{Introduction}

Given a disc in the plane
select any point in the disc.
Cut the disc by four lines through this point
that are equally spaced.
We obtain eight slices of the disc, each having
angle $\pi/4$ at the point.
The alternating sum of the areas of these eight slices is equal to zero.
This is known as the Pizza Theorem and was first stated
as a problem in Mathematics Magazine by Upton~\cite{Upton}
and solved by Goldberg~\cite{Goldberg}.
There are many two-dimensional extensions of this result;
see~\cite{Frederickson,Mabry_Deiermann}
and the references therein.
An especially interesting solution to the original problem
is by Carter and Wagon~\cite{Carter_Wagon},
who prove the result by a dissection.

In hyperplane arrangement terminology four equally spaced lines through one point
is equivalent to the $B_{2}$ arrangement.
Goldberg~\cite{Goldberg} 
pointed out that the Pizza Theorem also holds for $2k$ equally spaced
lines through a point inside a disc, that is,
the dihedral arrangement~$I_{2}(2k)$.
We extend these results to
hyperplane arrangements in higher dimensions.
Given a base chamber, every chamber~$T$ of an arrangement
has a natural sign $(-1)^{T}$.
We define the {\em pizza quantity} of a measurable set~$K$ with
finite volume
to be the alternating sum
$P(\Hf, K) = \sum (-1)^{T} \cdot \Vol(K \cap T)$,
where the sum ranges over all chambers~$T$ of the arrangement~$\Hf$;
see equation~\eqref{equation_pizza_definition}.

We first establish an expression that relates the variation of the pizza
quantity $P(\Hf,K)$ when we translate a set~$K$ to pizza quantities
of arrangements on hyperplanes in $\Hf$
known as the even restricted arrangements;
see Theorem~\ref{theorem_reduction}.
Using this formula and induction on the dimension of $V$, we show
that if $\Ball(a,R)$ is the closed ball with center $a$ and radius
$R$ and if $|\Hf|$ and $\dim(V)$ have the same parity, 
then the pizza quantity $P(\Hf,\Ball(a,R))$
is polynomial in
the pair $(R,a)$, homogeneous of degree $\dim(V)$ and 
only having terms of even degree in~$R$ as long as $\Ball(a,R)$ contains
the origin; see Theorem~\ref{theorem_second_in_introduction}.
When~$|\Hf|$ and $\dim(V)$ do not have have the same parity then we
can only say that $P(\Hf,\Ball(a,R))$ is a real analytic function of
$(a,R)$. If the arrangement $\Hf$ has enough symmetries, we can use
them to show that the lower degree terms of
$P(\Hf,\Ball(a,R))$ vanish, and in some cases,
that the pizza quantity~$P(\Hf,\Ball(a,R))$ itself vanishes.
More precisely, we obtain the following result
(see Theorem~\ref{theorem_almost_all_the_products} and
Corollary~\ref{corollary_going_to_infinity} for more general statements):

\begin{theorem}
Let $\Hf$ be a Coxeter arrangement 
on a finite-dimensional
inner product space $V$ such that $|\Hf| \geq \dim(V)$.
Assume that the ball $\mathbb{B}(a,R)$ contains the origin,
that is, $R \geq \|a\|$.
\begin{itemize}
\item[(i)]
If the number of hyperplanes is strictly greater than the dimension of~$V$
and has the same parity as that dimension
then the pizza quantity $P(\Hf,\mathbb{B}(a,R))$ vanishes.
\item[(ii)]
If the number of hyperplanes is strictly greater than the dimension of~$V$
and does not have the same parity as that dimension then
the pizza quantity $P(\Hf,\mathbb{B}(a,R))$ tends to $0$ as
the radius~$R$ tends to infinity (when the center~$a$ is fixed).
\item[(iii)]
If the number of hyperplanes is equal to the dimension of $V$
then
$P(\Hf,\mathbb{B}(a,R))$ is independent of the radius~$R$.
\end{itemize}
\label{theorem_second_in_introduction}
\end{theorem}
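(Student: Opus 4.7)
The plan is to combine the polynomiality and real-analyticity statements that come out of the reduction formula (Theorem~\ref{theorem_reduction}) with the $W$-antiinvariance of the pizza quantity in the center $a$, plus a simple scaling identity for balls.

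\textbf{Step 1 (Coxeter antiinvariance).} First I would show that, for every $w$ in the Coxeter group $W$ generated by the reflections in $\Hf$,
\[
P(\Hf, \Ball(wa, R)) \;=\; (-1)^{\ell(w)} \, P(\Hf, \Ball(a, R)),
\]
where $\ell(w)$ denotes the length of $w$. Since $w$ is an isometry fixing the origin, $w\Ball(a,R)=\Ball(wa,R)$; after the change of chambers $T\mapsto wT$ in the definition of $P$, the identity reduces to the standard sign relation $(-1)^{wT}=(-1)^{\ell(w)}(-1)^T$, which itself follows from a symmetric-difference count of the hyperplanes separating chambers. Thus, for each fixed $R$, the map $a\mapsto P(\Hf,\Ball(a,R))$ is $W$-antiinvariant. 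Since the $W$-antiinvariants form a free rank-$1$ module over the $W$-invariants, generated by $\prod_{H\in\Hf}\alpha_H$ (a polynomial of degree $|\Hf|$), every such antiinvariant function --- whether polynomial or merely real-analytic near $0$ --- is divisible by this product and hence vanishes to order at least $|\Hf|$ at the origin.

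\textbf{Step 2 (Cases (i) and (iii)).} In the same-parity cases, the polynomiality half of the paper's preparatory results writes
\[
P(\Hf, \Ball(a, R)) \;=\; \sum_{k \geq 0} R^{2k}\, Q_k(a),
\]
where $Q_k$ is a homogeneous polynomial of degree $n-2k$, with $n=\dim(V)$. By Step 1, each $Q_k$ is a $W$-antiinvariant polynomial, hence vanishes as soon as $\deg Q_k = n-2k < |\Hf|$. In case (i), $|\Hf|>n\geq n-2k$ for every $k\geq 0$, so every $Q_k$ is zero and $P\equiv 0$. In case (iii), $|\Hf|=n$ forces $Q_k=0$ for $k\geq 1$ while $Q_0$ is a scalar multiple of $\prod_H \alpha_H(a)$, which is independent of $R$.

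\textbf{Step 3 (Case (ii)).} For the different-parity case, polynomiality is no longer available, but real-analyticity is. I would use the scaling identity $\Ball(a,R) = R\cdot\Ball(a/R,1)$, which immediately gives
\[
P(\Hf, \Ball(a, R)) \;=\; R^n\, P(\Hf, \Ball(a/R, 1)).
\]
By Step 1, $b\mapsto P(\Hf,\Ball(b,1))$ is a $W$-antiinvariant real-analytic function of $b$, so it vanishes to order at least $|\Hf|$ at $b=0$; that is, $P(\Hf,\Ball(b,1))=O(\|b\|^{|\Hf|})$ as $b\to 0$. Letting $R\to\infty$ with $a$ fixed, one has $a/R\to 0$, and the scaling identity yields
\[
P(\Hf, \Ball(a,R)) \;=\; R^n\cdot O\!\left(R^{-|\Hf|}\right) \;=\; O\!\left(R^{\,n-|\Hf|}\right) \longrightarrow 0,
\]
since $n<|\Hf|$.

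The main obstacle I anticipate is the analytic divisibility claim used in Step~1: promoting the classical statement that polynomial $W$-antiinvariants are divisible by $\prod_H\alpha_H$ to the statement that any real-analytic antiinvariant germ at $0$ is the product of $\prod_H\alpha_H$ and an invariant analytic germ. One handles this by working first in the formal power series ring, where divisibility is automatic from the polynomial case applied degree by degree to the Taylor expansion, and then checking that the resulting invariant quotient still converges. Once this divisibility is in hand, the three cases reduce to the bookkeeping of Steps~2 and~3.
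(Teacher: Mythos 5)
Your proposal is correct and takes essentially the same route as the paper: the paper's Corollaries~\ref{corollary_almost_all_the_products} and~\ref{corollary_going_to_infinity} likewise combine the homogeneous-even-in-$R$ polynomiality of $P(\Hf,\Ball(a,R))$ (resp.\ real analyticity in the opposite-parity case), the $W$-antiinvariance from Corollary~\ref{corollary_Coxeter_on_a_hyperplane}, the observation that a nonzero antiinvariant polynomial of degree $<|\Hf|$ cannot vanish on all the reflecting hyperplanes, and scaling for the limit in case~(ii). The only cosmetic difference is that you organize the antiinvariance degree-by-degree in the $Q_k$, while the paper fixes $R$ and applies it to $h(x)=g_{\Hf,q}(R,x)$; also, for the vanishing-order estimate in Step~3 you need only the degree-by-degree vanishing of the Taylor coefficients, not the full divisibility of the analytic germ by $\prod_H\alpha_H$, so the extra convergence worry is unnecessary.
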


In the case when $V$ is $3$-dimensional
and
$\Hf$ is the arrangement of type~$A_{1} \times I_{2}(2k)$,
this corollary
specializes to the Calzone Theorem;
see~\cite{Berzsenyi}
and~\cite[page~32]{Frederickson}.

Note that if $V$ is $1$-dimensional, if $\Hf$ consists of the
hyperplane $\{0\}$ and if $K$ is a segment centered at $0$, then the
pizza quantity $P(\Hf,K+a)$ is equal to $2a$ as long as $0\in K+a$, so
it is polynomial in $a$ and independent of $K$. Building on this
observation, we prove a similar result in higher dimensions for
an \emph{even} arrangement $\Hf$ (see Definition~\ref{def_even})
and a measurable set that is \emph{sufficiently symmetric} with
respect to $\Hf$ (see Definition~\ref{definition_sufficiently_symmetric}).
In particular, if $\Hf$ is a Coxeter arrangement then
it is even if and only if $-\id_V$ is in its Coxeter group
(Corollary~\ref{corollary_nested_even_arrangements_Coxeter}).
Furthermore, we show any measurable set
stable by its Coxeter group is sufficiently symmetric
(Corollary~\ref{corollary_sufficiently_symmetric_Coxeter}).
We then conclude the following result, which is a particular
case of Theorem~\ref{theorem_amazing}
(recall that a convex body is a compact convex set):

\begin{theorem}
Let $\Hf$ be a Coxeter arrangement 
on a finite-dimensional
inner product space $V$ such that
the map $-\id_{V}$
belongs to the Coxeter group.
Equivalently,
let $\Hf$ be a product arrangement where the factors
are from the types
$A_{1}$,
$B_{n}$ for $n \geq 3$,
$D_{2m}$ for $m \geq 2$,
$E_{7}$, $E_{8}$, $F_{4}$,
$H_{3}$, $H_{4}$
and $I_{2}(2k)$ for $k \geq 2$.
Let $K$ be a convex body stable under
reflections in the hyperplanes of the arrangement~$\Hf$
such that the translate $K+a$ contains the origin.
Then the pizza quantity $P(\Hf,K+a)$ is given by
\begin{align*}
P(\Hf,K+a)
& =
\begin{cases}
2^{n} \cdot a_{1} \cdots a_{n} &
\text{if $\Hf$ is of type $A_{1}^{n}$,} \\
0 &
\text{otherwise.}
\end{cases}
\end{align*}
When the arrangement $\Hf$ has type $A_{1}^{n}$,
we assume that
it is given by the coordinate hyperplanes $\{x_i=0 : 1\leq i\leq n\}$,
the base chamber is $T_{0}=(\Rrr_{>0})^{n}$
and the point $a$ is given by $a = (a_{1},\ldots,a_{n})$.
\label{theorem_first_in_introduction}
\end{theorem}

The paper is organized as follows.
In Section~\ref{section_Initial_remarks}
we introduce the pizza quantity,
review Coxeter and product arrangements,
and point out some basic properties of these notions.
In Section~\ref{section_The_even_restricted_arrangement}
we define the even restricted arrangement.
This is a subarrangement of the restricted arrangement.
Using this notion we develop a recursion 
to compute
the pizza quantity; see Theorem~\ref{theorem_reduction}.
In Section~\ref{section_condition_E}
we introduce the notion of
an even restriction sequence,
that is, a sequence iterating the notion
of the even restricted arrangement.
This yields the notion of an {\em even arrangement},
that is, 
an arrangement such that
every even restriction sequence extends to a sequence
that has length equal to the dimension of the space.
In Corollary~\ref{corollary_nested_even_arrangements_Coxeter}
we classify all the even Coxeter arrangements.
One equivalent condition
is that the negative of the identity map belongs to the Coxeter group.
In Section~\ref{section_sufficiently_symmetric}
we introduce the notion of a sufficiently symmetric measurable set.
For sufficiently symmetric measurable sets $K$ of finite volume
and for $a\in V$ satisfying some conditions,
we show for example that the pizza quantity of $K+a$
only depends on the shift $a$
and is given by the associated polynomial
of the even arrangement;
see Theorem~\ref{theorem_pizza_is_polynomial}.
In Section~\ref{section_ball}
we restrict our attention to balls
and to convex bodies bounded by quadratic surfaces.
For a Coxeter arrangement~$\Hf$ such that $|\Hf|>\dim(V)$, we show
that the pizza quantity $P(\Hf,\mathbb{B}(a,R))$ is equal to~$0$
if $|\Hf|$ and $\dim(V)$ have the same
parity, and that otherwise
$P(\Hf,\mathbb{B}(a,R)) \longrightarrow 0$ as $R \longrightarrow +\infty$
(with the center $a$ fixed).
Moreover if $\Hf=\dim(V)$ we show that
$P(\Hf,\mathbb{B}(a,R))$ is independent of~$R$. In all these cases,
we always assume that the ball $\mathbb{B}(a,R)$ contains the origin.
In Section~\ref{section_surface_volume}
we look at the case of the $n$-dimensional ball
and consider the alternating sum of 
the surface volume of the regions.
This is the Pizza Theorem for the $(n-1)$st intrinsic volume;
see Theorem~\ref{theorem_surface_volume_two}.
Finally, in Section~\ref{section_concluding_remarks}
we briefly consider the problem of sharing pizza among
more than two people, and state
some concluding remarks
and open questions.

\section{Initial remarks and definitions}
\label{section_Initial_remarks}

Let $V$ be an $n$-dimensional real vector
space endowed with an inner product: for $v,w\in V$, we denote
their inner product by $\bl{v}{w}$.
Let $E$ be a collection of {\em unit} vectors in~$V$
such that no two vectors of $E$ are linearly dependent.
In other words, the intersection $E \cap (-E)$ is empty.
By restricting to unit vectors, we will avoid having
normalization factors in our expressions.
Let $\Hf$ be the central hyperplane arrangement corresponding to~$E$,
that is, $\Hf = \{H_{e} : e \in E\}$ where
$H_{e} = \{x \in V : \bl{e}{x} = 0\}$.
Note that each hyperplane contains the origin. 
Let $|\Hf|$ denote the number of hyperplanes in the arrangement~$\Hf$,
that is, the cardinality of the index set $E$.
A {\em chamber} of a hyperplane arrangement is a connected
component of the complement of the arrangement in~$V$.
Let $\Tf=\Tf(\Hf)$ be the collection of chambers of~$\Hf$.
For two chambers~$T_{1}$ and~$T_{2}$ in~$\Tf$,
define the {\em separation set} $S(T_{1},T_{2})$
to be the set of all indices $e \in E$
such that the two chambers lie on different sides of the hyperplane~$H_{e}$.

The arrangement $\Hf$ is oriented by the data of the set~$E$: if
$e\in E$ then the hyperplane $H_{e}$ cuts~$V$ into a positive and a
negative half-space, where the positive half-space is the one containing
the vector $e$. If $T$ is a chamber of $\Hf$ then it gives rise
to a sign vector in $\{\pm\}^E$ whose $e$-component, for $e\in E$, is
$+$ if and only if $T$ is included in the positive half-space bounded by
$H_{e}$. We assume that there is a chamber $T_{0}$ whose corresponding
sign vector only has $+$ components, and we call it the \emph{base chamber}.
The base chamber also determines the direction of each vector in the set~$E$:
for each $H\in\Hf$, the
corresponding vector is the unique normal unit vector $e$ of $H$ such that
$T_{0}$ and $e$ are on the same side of $H$.
Each chamber $T$ has a {\em sign}~$(-1)^{T}$,
which is $-1$ to the number of hyperplanes of the arrangement 
one must pass through
when walking from $T_{0}$ to $T$,
that is,
$(-1)^{T} = (-1)^{|S(T_{0},T)|}$.
For a reference on hyperplane arrangements,
see~\cite[Chapter~1]{OM}
or~\cite[Section~3.11]{Stanley_EC_I}.

Let $a$ be a point in the vector space $V$ 
and let $R$ be a nonnegative real number.
Let $\Ball(a,R)$ be the ball of radius $R$ centered at $a$,
that is,
\begin{align*}
\Ball(a,R) & = \{x \in V : \| x-a \| \leq R \} . 
\end{align*}

The space $V$ with its inner product is isomorphic to $\R^n$, and
we denote by $\Vol_V$ or just $\Vol$ 
the pullback by such an isomorphism of Lebesgue
measure on $\R^n$.
This does not depend on the choice of the isomorphism,
because Lebesgue measure is invariant under isometries. 
For every Lebesgue measurable subset $K$ of $V$ that has finite volume,
define the {\em pizza quantity} for $K$ to be
the alternating sum
\begin{align}
P(\Hf,K)
& = 
\sum_{T \in \Tf} (-1)^{T} \cdot \Vol(K \cap T).
\label{equation_pizza_definition}
\end{align}
There is a slight abuse of notation here:
the quantity in~\eqref{equation_pizza_definition}
not only depends on the arrangement~$\Hf$,
but also on the base chamber $T_{0}$.
More generally, if $f:V \longrightarrow \Ccc$ is any $L^1$ function,
we define the {\em pizza quantity} for $f$ to be
\begin{align*}
P(\Hf,f)
& = 
\sum_{T \in \Tf} (-1)^{T} \cdot \int_{T} f(x) \: dV .
\end{align*}
We then have $P(\Hf,K)=P(\Hf,\ungras_K)$, where $\ungras_K$
is the characteristic function of $K$.

The $2$-dimensional Pizza Theorem is as follows:
\begin{theorem}[Goldberg~\cite{Goldberg}]
Let $\Hf$ be the dihedral arrangement $I_{2}(2k)$ in $\Rrr^{2}$
for $k \geq 2$.
For every $a \in \Rrr^{2}$ and every $R \geq \|a\|$,
the pizza quantity for the ball $\Ball(a,R)$ vanishes:
\begin{align*}
P(\Hf, \Ball(a,R)) & = 0.
\end{align*}
\end{theorem}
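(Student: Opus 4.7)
The plan is to pass to polar coordinates centered at the common intersection point of the hyperplanes and reduce the identity to a disjointness of Fourier supports. In polar coordinates $(r,\theta)$, the $4k$ chambers of $I_{2}(2k)$ are the angular sectors $T_{j} = \{(r,\theta) : j\pi/(2k) < \theta < (j+1)\pi/(2k)\}$ for $j = 0, \ldots, 4k-1$, with alternating signs $(-1)^{j}$ after a suitable choice of base chamber. Since $R \geq \|a\|$, the origin lies in $\Ball(a,R)$, so for each $\theta$ the ray from the origin in direction $\hat\theta = (\cos\theta, \sin\theta)$ meets $\Ball(a,R)$ in a segment $[0, r_{+}(\theta)]$, with $r_{+}(\theta) = \bl{a}{\hat\theta} + \sqrt{R^{2} - \|a\|^{2} + \bl{a}{\hat\theta}^{2}}$. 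Integrating $\tfrac{1}{2}r_{+}(\theta)^{2}$ on each sector and interchanging the finite sum with the integral would rewrite
\[
P(\Hf, \Ball(a,R)) = \int_{0}^{2\pi} g(\theta)\, f(\theta)\, d\theta,
\]
where $g(\theta) = (-1)^{\lfloor 2k\theta/\pi \rfloor}$ and $f(\theta) = \tfrac{1}{2}r_{+}(\theta)^{2}$.

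The second step is a Fourier support analysis. The square wave $g$ agrees up to sign with $\operatorname{sgn}(\sin(2k\theta))$, so its Fourier expansion is supported in the frequencies $\{\pm 2k, \pm 6k, \pm 10k, \ldots\}$, which are all even integers of magnitude at least $2k \geq 4$. After rotating so that $a$ lies on the positive $x$-axis, expanding $r_{+}(\theta)^{2}$ yields a constant, a multiple of $\cos^{2}\theta$ (Fourier support $\{0,\pm 2\}$), and a term of the form $\|a\|\cos\theta \cdot \sqrt{R^{2} - \|a\|^{2}\sin^{2}\theta}$. Since the square root depends on $\theta$ only through $\sin^{2}\theta$, it is even and $\pi$-periodic, hence its Fourier expansion consists only of cosines at even integers; multiplying by $\cos\theta$ shifts these to odd integers. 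The constant dies against the zero-mean $g$; the frequencies $\pm 2$ are absent from the support of $g$ because $k \geq 2$; and the odd integers are disjoint from the even Fourier support of $g$. Hence $P(\Hf, \Ball(a,R)) = 0$ by Parseval's identity.

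The main obstacle is really just the bookkeeping of the Fourier supports, and in particular verifying the two parity claims above; the rest of the argument is a routine computation. The hypothesis $k \geq 2$ enters essentially: for $k = 1$ (the $A_{1} \times A_{1}$ arrangement) the square wave $g$ would have support $\{\pm 2, \pm 6, \ldots\}$, which overlaps the $\pm 2$ produced by the $\cos^{2}\theta$ term, and indeed the theorem fails in that case.
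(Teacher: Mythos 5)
Your proof is correct, and it is a genuinely different argument from the one the paper's machinery provides. You work directly in polar coordinates centered at the common intersection point: the hypothesis $R \geq \|a\|$ guarantees that each ray from the origin meets $\Ball(a,R)$ in a segment $[0, r_+(\theta)]$, so $P(\Hf, \Ball(a,R)) = \int_0^{2\pi} g(\theta)\,\tfrac{1}{2}r_+(\theta)^2\,d\theta$ with $g$ a $\pm 1$ square wave of period $\pi/k$, and then you check disjointness of Fourier supports. The parity bookkeeping is right: the Fourier support of $g$ (before or after the rotation putting $a$ on the positive $x$-axis, since translating $\theta$ does not move the support) lies in $\{\pm 2k, \pm 6k, \ldots\}$, all even and of magnitude at least $4$; while $r_+^2 = R^2 + \|a\|^2\cos 2\theta + 2\|a\|\cos\theta\sqrt{R^2 - \|a\|^2\sin^2\theta}$ has support in $\{0, \pm 2\}\cup(\text{odd integers})$, because a $\pi$-periodic even function has only cosines at even frequencies and multiplying by $\cos\theta$ shifts those to odd frequencies. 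With $k \geq 2$ the supports are disjoint, and orthogonality finishes it. This is a clean, self-contained argument, but it is essentially tied to dimension $2$ (where the sphere is a circle and Fourier series are available in one angular variable). By contrast, the paper cites Goldberg for this $2$-dimensional statement and derives it instead as the base case of a higher-dimensional theory: the recursion of Theorem~\ref{theorem_reduction} (proved via the divergence theorem) reduces $P(\Hf,\Ball(a,R))$ to pizza quantities of even restricted arrangements one dimension lower, and a Coxeter-symmetry plus polynomial-vanishing argument (as in Proposition~\ref{proposition_polynomial_Coxeter_arrangement} and Corollary~\ref{corollary_almost_all_the_products}) shows the resulting polynomial in $a$ of degree at most $\dim V$ vanishes on all $|\Hf| > \dim V$ hyperplanes, hence is identically zero. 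Your Fourier argument buys a short, elementary proof in the plane; the paper's approach buys a uniform method that extends to all dimensions and to arrangements beyond $I_2(2k)$.
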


A hyperplane arrangement $\Hf$ is a {\em Coxeter arrangement}
if the group $W$ generated by the orthogonal reflections
in the hyperplanes of $\Hf$ is finite and the
arrangement is closed under all such reflections.
There is a vast literature concerning Coxeter arrangements
and their associated root systems; see~\cite{Bourbaki,Humphreys}.
The group $W$ is known as the {\em Coxeter group} of the arrangement.

We say a subset $L$ of $V$ is {\em stable}
with respect to a Coxeter group $W$ acting on $V$
if $W$ leaves it invariant, that is,
$w(L) = L$ for all $w \in W$.
We define the translate of  
$L$ by $a\in V$ to be the set $L+a = \{x+a : x \in L\}$.
If $f:V \longrightarrow\C$ is a function, we say that it is {\em stable}
with respect to a Coxeter group $W$ if $f(w(x))=f(x)$ for
every $w\in W$ and $x\in V$.
We then have
that a subset $L$ of $V$ is stable by $W$ if and only if
$\ungras_L$ is stable.
For $a \in V$, we denote
the shift of function $x \longmapsto f(x-a)$ by $f_a$.
Then we have
$(\ungras_{L})_{a}=\ungras_{L+a}$
for every $a \in V$.

\begin{proposition}
Let $u : V \longrightarrow V'$ be an isometry, 
where $V'$ is another real inner product space, and let
$\Hf$ be a hyperplane arrangement in~$V$. 
Let $u(\Hf)$ be the hyperplane arrangement 
$\{u(H) : H \in \Hf\}$ with base chamber $u(T_{0})$.
For any $L^1$ function $f:V' \longrightarrow \C$,
the following equality holds:
\begin{align*}
P(u(\Hf), f) & = P(\Hf, f \circ u) .
\end{align*}
\label{proposition_equivariance_pizza_quantity}
\end{proposition}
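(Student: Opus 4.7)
The plan is to reduce the identity to two elementary facts: isometries preserve Lebesgue measure, and isometries transport both the combinatorial and the orientation data of a hyperplane arrangement to the corresponding data of its image. The first will let me match the integrals appearing on the two sides of the claimed equality chamber by chamber, and the second will let me match the signs.

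First I would note that $u$ induces a bijection $T \longmapsto u(T)$ between the chambers of $\Hf$ and those of $u(\Hf)$, since $u$ is a bijection of $V$ onto $V'$ sending each hyperplane $H_{e}$ to $u(H_{e})$; a short check with the defining equation $\bl{e}{x}=0$ and the identity $\bl{u(e)}{u(x)}=\bl{e}{x}$ gives $u(H_{e})=H_{u(e)}$. Since $u$ preserves unit vectors and the base chamber of $u(\Hf)$ is by hypothesis $u(T_{0})$, the normal vector of $u(\Hf)$ selecting the positive side of $u(H_{e})$ is precisely $u(e)$. From this I would deduce that $u$ identifies the separation sets via $S(u(T_{0}),u(T))=\{u(e):e\in S(T_{0},T)\}$, so these two sets have the same cardinality, and consequently $(-1)^{u(T)}=(-1)^{T}$.

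The remaining ingredient is the change-of-variables formula: since $u$ is an isometry between finite-dimensional inner product spaces, it is represented in orthonormal bases by an orthogonal matrix, hence it preserves the pullback of Lebesgue measure, and so $\int_{u(T)} f(y) \: dV'(y) = \int_{T} f(u(x)) \: dV(x)$ for every chamber $T$ of $\Hf$. Summing against the equal signs $(-1)^{T}=(-1)^{u(T)}$ yields the desired identity $P(u(\Hf),f)=P(\Hf,f\circ u)$. I do not anticipate a substantive obstacle here; if anything needs care it is the bookkeeping of the orientation, namely ensuring that the unit vectors associated to the hyperplanes of $u(\Hf)$ really are the images under $u$ of those associated to the hyperplanes of $\Hf$, which is forced by the choice of $u(T_{0})$ as base chamber.
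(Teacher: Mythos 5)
Your proof is correct and follows essentially the same approach as the paper's: the map $u$ induces a sign-preserving bijection between chambers, and the isometry preserves volume. You have simply spelled out the details (the identity $u(H_e) = H_{u(e)}$, the matching of base chambers and orientations, the change-of-variables step) that the paper leaves implicit.
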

\begin{proof}
The map $u$ induces a bijection between the chambers of
$\Hf$ and those of $u(\Hf)$ that respects
the cardinality of the separation set,
that is,
$|S(T_{1},T_{2})| = |S(u(T_{1}),u(T_{2}))|$.
The conclusion follows from the fact that $u$, being an isometry,
is volume-preserving.
\end{proof}

\begin{corollary}
Let $\Hf$ be a Coxeter arrangement with Coxeter group $W$
and $f:V \longrightarrow\C$ be an $L^1$~function.
Then for every $w\in W$ we have
\begin{align*}
P(\Hf,f\circ w) & = \det(w) \cdot P(\Hf,f) . 
\end{align*}
In particular, if $f$ is stable by a reflection in $W$
then we have $P(\Hf,f)=0$.
\label{corollary_Coxeter_on_a_hyperplane}
\end{corollary}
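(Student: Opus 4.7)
The plan is to apply Proposition~\ref{proposition_equivariance_pizza_quantity} with $u = w$ and then reconcile the resulting base chamber. Since $\Hf$ is a Coxeter arrangement, each $w \in W$ permutes the hyperplanes of $\Hf$, so $w(\Hf) = \Hf$ as a set of hyperplanes; however, the base chamber of $w(\Hf)$ in the sense of that proposition is $w(T_{0})$ rather than $T_{0}$. Thus Proposition~\ref{proposition_equivariance_pizza_quantity} reads
\[
P(\Hf, f \circ w) \;=\; P(w(\Hf), f) \;=\; \sum_{T \in \Tf} (-1)^{|S(w(T_{0}), T)|} \cdot \int_{T} f(x) \, dV,
\]
and the task reduces to converting the exponent $|S(w(T_{0}), T)|$ back to $|S(T_{0}, T)|$ so that one recovers $P(\Hf, f)$ up to a global sign.

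The key observation is the symmetric-difference identity
$S(T_{0}, T) \,\triangle\, S(w(T_{0}), T) = S(T_{0}, w(T_{0}))$,
which holds because a hyperplane separates exactly one of the two pairs $(T_{0}, T)$, $(w(T_{0}), T)$ precisely when it separates $T_{0}$ from $w(T_{0})$. Passing to parities gives $(-1)^{|S(w(T_{0}), T)|} = (-1)^{|S(T_{0}, w(T_{0}))|} \cdot (-1)^{|S(T_{0}, T)|}$, and pulling the global factor out of the sum yields $P(\Hf, f \circ w) = (-1)^{|S(T_{0}, w(T_{0}))|} \cdot P(\Hf, f)$. To identify this prefactor with $\det(w)$, I would invoke the standard Coxeter-theoretic fact that $|S(T_{0}, w(T_{0}))|$ equals the length $\ell(w)$ of $w$ with respect to the simple reflections determined by $T_{0}$, combined with $\det(w) = (-1)^{\ell(w)}$, which holds since each simple reflection has determinant $-1$. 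These facts are recorded in the Coxeter-group references already cited in the paper.

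For the final assertion, suppose $f$ is invariant under some reflection $s \in W$, so that $f \circ s = f$. The first identity then forces $P(\Hf, f) = \det(s) \cdot P(\Hf, f) = -P(\Hf, f)$, whence $P(\Hf, f) = 0$. The only genuinely delicate step is the bookkeeping with base chambers that produces the global sign; everything else is an immediate consequence of Proposition~\ref{proposition_equivariance_pizza_quantity} and classical Coxeter theory.
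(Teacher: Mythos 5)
Your proof is correct and follows essentially the same route as the paper: apply Proposition~\ref{proposition_equivariance_pizza_quantity} with $u=w$, observe $w(\Hf)=\Hf$ (but with base chamber $w(T_0)$), and convert the sign via $(-1)^{|S(T_0,w(T_0))|}=\det(w)$. The paper states this tersely without spelling out the symmetric-difference identity or the $\ell(w)$ argument, which you supply; both details are correct and standard.
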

\begin{proof}
The first statement is just 
Proposition~\ref{proposition_equivariance_pizza_quantity},
because $w(\Hf)=\Hf$
and
$(-1)^{|S(T_{0},w(T_{0}))|} = \det(w)$
for every $w\in W$.
The second statement follows from the first and from
the fact that reflections have determinant $-1$.
\end{proof}

\begin{remark}
{\rm
Note that the situation
where $f$ is stable by a reflection $s$ in $W$
occurs if $f=g_a$, with $g:V \longrightarrow\C$ an $L^1$ function
that is stable by $W$ and $a$ a point belonging
to the hyperplane $H$ of the arrangement~$\Hf$,
where $s$ is the orthogonal reflection in~$H$.
}
\label{remark_stable}
\end{remark}

Given two hyperplane arrangements
$\Hf_{1} = \{H_{e}\}_{e \in E_{1}}$
and
$\Hf_{2} = \{H_{e}\}_{e \in E_{2}}$
in the vector space~$V_{1}$, respectively~$V_{2}$,
define the product arrangement $\Hf_{1} \times \Hf_{2}$
in $V_{1} \times V_{2}$, where the index set of
vectors is $E = (E_{1} \times \{0\}) \cup (\{0\} \times E_{2})$.
Note that in this construction
the hyperplanes inherited from~$\Hf_{1}$
are orthogonal to the hyperplanes inherited from~$\Hf_{2}$.
Furthermore, if $T_{i,0}$ is the base chamber of~$\Hf_{i}$
then $T_{1,0} \times T_{2,0}$ is 
the base chamber of $\Hf_{1} \times \Hf_{2}$.

\section{The even restricted arrangement}
\label{section_The_even_restricted_arrangement}

In this section we obtain a recursion for evaluating the pizza quantity
in terms of lower dimensional pizza quantities
on certain subarrangements of the given arrangement.
We begin by introducing two definitions.

\begin{definition}
\begin{enumerate}[(a)]
\item
Let $V' \subseteq V$ be a subspace of codimension $2$.
The \emph{intersection multiplicity} of the arrangement $\Hf = \{H_{e} : e \in E\}$
at the subspace $V'$ is the cardinality
\begin{align*}
\im(V') & = |\{e\in E : H_{e} \supseteq V'\}|.
\end{align*}
\item
For $e \in E$ the {\em even restricted arrangement}
$\Hf_{e}$ is the arrangement inside the vector space~$H_{e}$
consisting of the hyperplanes $H_{e} \cap H_{f}$ (these are
codimension $2$ subspaces of $V$) with
even intersection multiplicity, that is,
\begin{align*}
\Hf_{e}
& =
\{H_{e} \cap H_{f} : f \in E, e \neq f, \im(H_{e} \cap H_{f}) \equiv 0 \bmod 2\} .
\end{align*}
\end{enumerate}
\end{definition}

The even restricted arrangement
$\Hf_{e}$ is a subarrangement of the {\em arrangement~$\Hf$
restricted to the hyperplane~$H_{e}$},
for brevity known as the restricted arrangement,
that is,
$\Hf_{e}^{\prime\prime} = \{H_{e} \cap H_{f} : f \in E, e \neq f\}$;
see~\cite[Section~3.11.2]{Stanley_EC_I}.
In general these two arrangements are different.
Even though the letter $e$ appears in the notation for $\Hf_{e}$, the arrangement
$\Hf_{e}$ only depends on $H_{e}$, and not on the choice of its normal
vector $e$.

\begin{proposition}
Suppose that $\Hf$ is a Coxeter arrangement, and let $e\in E$.
\begin{itemize}
\item[(i)] Let $e'\in E - \{e\}$. Then the intersection multiplicity of $\Hf$ at
$H_e\cap H_{e'}$ is even if and only if there exists $f\in E - 
\{e\}$ such that $H_e\cap H_{f}=H_e\cap H_{e'}$ and that
$(e,f)=0$.

\item[(ii)] Let $F=\{f\in E : (e,f)=0\}=E\cap H_e$. Then
$\Hf_e$ is the hyperplane arrangement $(H_e\cap H_f)_{f\in F}$ on $H_e$, and
it is a Coxeter arrangement.

\end{itemize}
\label{prop_even_arrangement_Coxeter}
\end{proposition}

\begin{proof}
\begin{itemize}
\item[(i)]
Let $E'=\{f\in E - \{e\} : H_e\cap H_{f}=H_e\cap H_{e'}\}$.
Then $|E'|+1$ is the intersection multiplicity of $\Hf$ at
the intersection $H_e \cap H_{e'}$, and we have
$s_e(E')=E'$, where $s_e$ is the orthogonal reflection in the hyperplane $H_e$.
As $s_e^2=1$, the set $E'$ has odd cardinality if and only if
$s_e$ has a fixed point on $E'$. But $f\in E'$ is fixed by
$s_e$ if and only if $s_e(H_{f})=H_{f}$, which is equivalent
to the condition that $(e,f)=0$.

\item[(ii)] The first statement follows from~(i). It remains to prove that
$\Hf_e$ is a Coxeter arrangement. For every
$f\in F$, if $s_f$ is the orthogonal reflection in the hyperplane $H_f$,
then $s_f(H_e)=H_e$ and $s_f(F)=F$.
In particular, $s_f$ preserves the arrangement~$\Hf_e$.
We conclude that $\Hf_e$ is a Coxeter arrangement on~$H_e$.

\end{itemize}
\end{proof}

For a hyperplane $H_{e}$
define the open half spaces 
$H_{e}^{+}$ and $H_{e}^{-}$
to be
\begin{align*}
H_{e}^{+} & = \{x \in V : \bl{e}{x} > 0\}
\hspace*{-20 mm}
& \text{and} &&
\hspace*{-20 mm}
H_{e}^{-} & = \{x \in V : \bl{e}{x} < 0\} .
\end{align*}
An {\em (open) face} of the arrangement
$\Hf = \{H_{e} : e \in E\}$
is a non-empty intersection of the form
\begin{align*}
\bigcap_{e \in E_{0}} H_{e}
\cap
\bigcap_{e \in E_{+}} H_{e}^{+}
\cap
\bigcap_{e \in E_{-}} H_{e}^{-} ,
\end{align*}
where $E = E_{0} \sqcup E_{+} \sqcup E_{-}$
and $\sqcup$ denotes disjoint union.
For a face $F$ of an arrangement $\Hf$ and a vector~$v$,
let the {\em composition} $F \circ v$ denote the face $G$ of the arrangement $\Hf$
such that $x + \varepsilon \cdot v \in G$
for all $x \in F$ and $\varepsilon > 0$ small enough.
See~\cite[pages~8 and~102]{OM}
where the composition is defined for signed vectors.

\begin{lemma}
Let $U_{1}$ and $U_{2}$ be two chambers
in the restricted arrangement $\Hf_{e}^{\prime\prime}$.
Let $Z_{i}$ be the unique chamber in 
the even restricted arrangement $\Hf_{e}$
containing $U_{i}$.
Then the parities of the two separation sets
$S_{\Hf}(U_{1} \circ e,U_{2} \circ e)$
and
$S_{\Hf_{e}}(Z_{1},Z_{2})$
agree, that is,
\begin{align*}
|S_{\Hf}(U_{1} \circ e, U_{2} \circ e)|
& \equiv
|S_{\Hf_{e}}(Z_{1},Z_{2})|
\bmod 2 .
\end{align*}
\label{lemma_parity_separation}
\end{lemma}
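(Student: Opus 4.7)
The plan is to read both separation cardinalities as parities of hyperplane crossings along a single generic path in $H_{e}$ shifted slightly into $H_{e}^{+}$, then match the two counts crossing by crossing. First, by a standard general position argument (the bad locus in $H_{e}$ is a finite union of proper subspaces), choose a path $\gamma\colon[0,1]\to H_{e}$ from a point of $U_{1}$ to a point of $U_{2}$ that meets the restricted arrangement $\Hf_{e}^{\prime\prime}$ transversally, one codimension-two subspace $H_{e}\cap H_{f}$ at a time, and only at points that lie on no further hyperplane of $\Hf$. For $\varepsilon>0$ small, the pushed path $\widetilde\gamma(t)=\gamma(t)+\varepsilon e$ lies in $H_{e}^{+}$ and joins a point of $U_{1}\circ e$ to a point of $U_{2}\circ e$.

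Next, analyze a single crossing of $\gamma$ at a generic point $p$ of $V':=H_{e}\cap H_{f}$. In the two-dimensional affine slice $p+V'{}^{\perp}$, the $\im(V')$ hyperplanes of $\Hf$ containing $V'$ appear as $\im(V')$ distinct lines through $p$, one of them being $H_{e}\cap(p+V'{}^{\perp})$. Locally, $\gamma$ is transverse to $V'$ inside $H_{e}$, so it runs along that $H_{e}$-line through $p$; the shifted path $\widetilde\gamma$ runs along the parallel line in the $e$-direction at height $\varepsilon$. Each of the remaining $\im(V')-1$ lines meets this parallel line in a unique point, and for $\varepsilon$ small enough this point lies on $\widetilde\gamma$. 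Thus each crossing of $\gamma$ at $V'$ contributes exactly $\im(V')-1$ hyperplane crossings to $\widetilde\gamma$, and genericity ensures that no other hyperplanes of $\Hf$ are met.

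Summing over all crossings and reducing modulo two yields
\begin{equation*}
|S_{\Hf}(U_{1}\circ e,U_{2}\circ e)|\;\equiv\!\!\sum_{V'\in S_{\Hf_{e}^{\prime\prime}}(U_{1},U_{2})}\!\!\bigl(\im(V')-1\bigr)\;\equiv\;\bigl|\{V'\in S_{\Hf_{e}^{\prime\prime}}(U_{1},U_{2}):\im(V')\text{ even}\}\bigr|\pmod{2}.
\end{equation*}
By the very definition of the even restricted arrangement, the last set is precisely $S_{\Hf_{e}}(Z_{1},Z_{2})$, because $Z_{i}$ is the unique chamber of $\Hf_{e}$ containing $U_{i}$ and a hyperplane of $\Hf_{e}$ separates $Z_{1}$ from $Z_{2}$ if and only if it separates $U_{1}$ from $U_{2}$. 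The only mildly delicate point is the general position of $\gamma$ together with the local count in the $2$-dimensional slice; everything else is parity bookkeeping, which is robust since we only need the answer modulo two.
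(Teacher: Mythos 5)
Your proof is correct. It is, however, not a fundamentally different argument from the paper's: both proofs rest on the same identity
\begin{align*}
|S_{\Hf}(U_{1}\circ e,\,U_{2}\circ e)|
&=\sum_{V'\in S_{\Hf_{e}^{\prime\prime}}(U_{1},U_{2})}\bigl(\im(V')-1\bigr),
\end{align*}
followed by the mod $2$ observation that $\im(V')-1$ is odd precisely when $V'\in\Hf_{e}$, and that for such $V'$ one has $V'\in S_{\Hf_{e}^{\prime\prime}}(U_{1},U_{2})$ if and only if $V'\in S_{\Hf_{e}}(Z_{1},Z_{2})$. The paper establishes this by purely combinatorial bookkeeping: it partitions $E\setminus\{e\}$ by the equivalence $f\sim f'\iff H_{e}\cap H_{f}=H_{e}\cap H_{f'}$, notes that the class of $f$ has $\im(H_{e}\cap H_{f})-1$ elements, shows that the separation set $S_{\Hf}(U_{1}\circ e,U_{2}\circ e)$ is a union of such classes (because $H_{f}$ separates $U_{1}\circ e$ and $U_{2}\circ e$ if and only if $H_{e}\cap H_{f}$ separates $U_{1}$ and $U_{2}$ in $H_{e}$), and then discards the even-size classes. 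You instead push a generic path $\gamma$ off $H_{e}$ by $\varepsilon e$ and count crossings in a local two-dimensional slice. That geometric lift is a perfectly valid way to re-derive the same counting identity, at the price of the general position and transversality checks you flag; in particular the claim that $\gamma$ ``runs along the $H_{e}$-line'' in the slice $p+V'^{\perp}$ should really be made precise by noting that for each $g\neq e$ with $H_{g}\supseteq V'$ the function $t\mapsto(g,\gamma(t))$ vanishes transversally at the crossing, so $t\mapsto(g,\widetilde\gamma(t))=(g,\gamma(t))+\varepsilon(g,e)$ has a unique zero nearby for $\varepsilon$ small, giving exactly one crossing of $\widetilde\gamma$ with $H_{g}$. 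It is worth noting that the displayed identity is in fact an exact equality, not merely a congruence, and it follows directly from the definition of $U_{i}\circ e$ without any path; so the geometric detour, while intuitive, is somewhat longer than necessary. Both routes are sound.
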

\begin{proof}
We denote by $X$ the set of hyperplanes of $\Hf_{e}$ and
by $S$ the set of vectors $f \in E-\{e\}$ such that
$H_{e}\cap H_{f}$ has even intersection multiplicity. Let
$\iota:S \longrightarrow X$ the map sending $f \in S$ to $H_{e}\cap H_{f}$.
We claim that:
\begin{itemize}
\item[(a)] The set $S_{\Hf}(U_{1} \circ e,U_{2} \circ e) - S$ has even cardinality.
\item[(b)] The image of $S\cap S_{\Hf}(U_{1} \circ e,U_{2} \circ e)$ 
under $\iota$ is $S_{\Hf_{e}}(Z_{1},Z_{2})$.
\item[(c)] The fibers of $\iota$ all have odd cardinality.

\end{itemize}
These three statements immediately imply the result, so it suffices to
prove them.

Consider the equivalence relation $\sim$ on $E-\{e\}$ defined
by $f\sim f'$ if and only if $H_{e}\cap H_{f}=H_{e}\cap H_{f'}$.
Then $S$ is a union of equivalence classes for $\sim$. We claim
that $S_{\Hf}(U_{1} \circ e,U_{2} \circ e)$ is also a union of equivalence classes
for $\sim$. This follows from the fact that $e\not\in S_{\Hf}(U_{1} \circ e,U_{2} \circ e)$
and that if $f\in E-\{e\}$ then $f\in S_{\Hf}(U_{1} \circ e,U_{2} \circ e)$ if and only
if $H_{f}\cap H_{e}$ separates $U_{1}$ and $U_{2}$. In particular, the
set $S_{\Hf}(U_{1} \circ e,U_{2} \circ e)-S$ is also a union of equivalence classes of
$\sim$. But if $f \in E-\{e\}$, then its equivalence class
for $\sim$ is the set of $f'\in E-\{e\}$ such that $H_{f}\cap H_{e}\subset 
H_{f'}$, so it has even cardinality if and only if $f\not\in S$. This
shows that $S_{\Hf}(U_{1} \circ e,U_{2} \circ e)-S$ is a disjoint union of sets of
even cardinality and proves (a). Also, as the fibers of $\iota$
are exactly the equivalence classes of $\sim$ that are contained in
$S$, we obtain (c). We finally prove (b). If $f\in S\cap S_{\Hf}(U_{1} \circ e,
U_{2} \circ e)$ then $U_{1}$ and $U_{2}$ are on opposite sides of
$H_{e}\cap H_{f}$ and $H_{e}\cap H_{f}\in\Hf_{e}$, so $Z_{1}$ and $Z_{2}$ must be on opposite
sides of $H_{e}\cap H_{f}$. Conversely, consider a hyperplane
$H$ of $\Hf_{e}$ such that $Z_{1}$ and $Z_{2}$ are on opposite sides of
$H$. We can write $H = H_{e} \cap H_{f}$ with $f \in S$.
The chambers $U_{1}$ and $U_{2}$ are on opposite sides of the hyperplane $H_{f}$,
so $f \in S_{\Hf}(U_{1} \circ e,U_{2} \circ e)$.
\end{proof}

For $e\in E$, let $Z$ be a chamber of the even restricted arrangement $\Hf_{e}$.
Note that the closure of $Z$ is
a union of closures of
chambers in the restricted arrangement~$\Hf_{e}^{\prime\prime}$,
that is,
$\overline{Z} = \cup_{U \in Q} \overline{U}$
for some subset $Q$ of chambers of~$\Hf_{e}^{\prime\prime}$.
By Lemma~\ref{lemma_parity_separation}
the sign $(-1)^{U \circ e}$ is independent
of $U\in Q$ and hence we define
$(-1)^{Z \circ e} = (-1)^{U \circ e}$ for any $U \in Q$.

\begin{theorem}
Let $\Hf = \{H_{e} : e \in E\}$ be a central hyperplane arrangement
with base chamber~$T_{0}$.
Let $f:V \longrightarrow\C$ be an $L^1$ function.

\begin{itemize}
\item[(i)]
Let $T$ be a chamber of $\Hf$. Then for every $a\in V$, we have
\begin{align}
\int_T f_{a}(x)\:dV-\int_Tf(x)\:dV=
\sum_{U}(-1)^T(-1)^{U\circ e_U}(a,e_U)\int_0^1
\left(\int_U f_{ta}(y)\: dV_{H_{e_U}}\right)\: dt,
\label{equation_reduction_in_integral_form_by_chamber}
\end{align}
where the sum runs over all facets $U$ of $T$ and, for each such
$U$, the vector $e_U$ is the unique element of $E$ such that
$U\subset H_{e_U}$. 

\item[(ii)]
For $e\in E$, let $Z_{0}(e)$ be an arbitrarily chosen 
base chamber of the even restricted arrangement~$\Hf_{e}$.
Let $a$ be a vector in $V$.
Then we have
\begin{align}
P(\Hf, f_{a}) - P(\Hf, f)
& =
2 \cdot
\sum_{e \in E}
(-1)^{Z_{0}(e) \circ e}
\cdot
\bl{a}{e}
\cdot
\int_{0}^{1}
P(\Hf_{e}, \left. f_{ta} \right|_{H_{e}}) dt .
\label{equation_reduction_in_integral_form}
\end{align}

\end{itemize}

\label{theorem_reduction}
\end{theorem}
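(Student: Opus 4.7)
The plan is to establish (i) by combining the fundamental theorem of calculus in the translation parameter with the divergence theorem applied to each chamber~$T$, and then to deduce (ii) by summing the identity of~(i) with weights $(-1)^T$ and reorganizing the boundary contributions hyperplane-by-hyperplane, invoking Lemma~\ref{lemma_parity_separation} to pass from the ordinary restricted arrangement $\Hf_{e}^{\prime\prime}$ to the even restricted arrangement $\Hf_{e}$.

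For part~(i), fix a chamber $T$ and set $g(t)=\int_T f_{ta}(x)\,dV$, so the left-hand side equals $g(1)-g(0)$. A density argument reduces us to the case $f\in C_c^\infty(V)$. For such $f$, the identity $f(x-a)-f(x)=-\int_0^1\nabla f(x-ta)\cdot a\,dt$ together with Fubini gives $g(1)-g(0)=-\int_0^1\int_T\nabla f_{ta}(x)\cdot a\,dV\,dt$. Since $a$ is constant, $\nabla f_{ta}\cdot a=\divv(f_{ta}\,a)$, and the divergence theorem rewrites the inner integral as $\sum_U\int_U f_{ta}(y)(a\cdot n_U)\,dV_{H_{e_U}}$, where $U$ ranges over the facets of $T$ and $n_U$ is the outward unit normal of $T$ along $U$. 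A case analysis identifies $-n_U=(-1)^T(-1)^{U\circ e_U}e_U$: if $T\subset H_{e_U}^+$ then $T=U\circ e_U$ and $n_U=-e_U$, while if $T\subset H_{e_U}^-$ then $T=U\circ(-e_U)$, $n_U=e_U$, and $(-1)^T=-(-1)^{U\circ e_U}$. Substituting yields exactly formula~(i).

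For part~(ii), multiply~(i) by $(-1)^T$ and sum over $T\in\Tf(\Hf)$. The left-hand side becomes $P(\Hf,f_a)-P(\Hf,f)$, and since $((-1)^T)^2=1$, the weight of each boundary contribution on the right loses its $T$-dependence. Each facet of $\Hf$ is shared by exactly two chambers, so the double sum collapses to
\[
P(\Hf,f_a)-P(\Hf,f)
=2\sum_{e\in E}(a,e)\int_0^1\sum_{U\in\Tf(\Hf_{e}^{\prime\prime})}(-1)^{U\circ e}\int_U f_{ta}\,dV_{H_e}\,dt.
\]
To identify the inner sum with $P(\Hf_{e},f_{ta}|_{H_e})$, group the chambers $U$ of $\Hf_{e}^{\prime\prime}$ according to the chamber $Z$ of the coarser arrangement $\Hf_{e}$ that contains them. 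By Lemma~\ref{lemma_parity_separation}, the sign $(-1)^{U\circ e}$ depends only on $Z$, legitimizing the notation $(-1)^{Z\circ e}$; a further comparison with the base chamber $Z_{0}(e)$ gives $(-1)^{Z\circ e}=(-1)^{Z_{0}(e)\circ e}\cdot(-1)^{Z}$, where $(-1)^{Z}$ is the sign of $Z$ inside~$\Hf_{e}$ relative to $Z_{0}(e)$. Since the chambers of $\Hf_{e}^{\prime\prime}$ contained in $Z$ tile $Z$ up to a set of $(n-1)$-dimensional measure zero, the inner sum collapses to $(-1)^{Z_{0}(e)\circ e}\,P(\Hf_{e},f_{ta}|_{H_e})$, which is~(ii).

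The main obstacle is analytic rather than combinatorial: chambers $T$ are unbounded, so the divergence theorem does not apply verbatim for a general $L^1$ function. I would justify the reduction to the smooth, compactly supported case by noting that both sides of~(i) depend continuously on $f$ with respect to the $L^1$-norm on~$V$. Indeed, applying Fubini to the right-hand side rewrites $\int_0^1\int_U f_{ta}(y)\,dV_{H_{e_U}}(y)\,dt$ as a constant multiple of an integral of $f$ over the $n$-dimensional tube $\{y-ta:y\in U,\ t\in[0,1]\}$, the relevant Jacobian being $|(a,e_U)|$; for $(a,e_U)=0$ the coefficient in front already vanishes. This provides the uniform $L^1$-continuity needed to pass to the limit from $C_c^\infty$ functions.
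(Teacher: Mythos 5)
Your proof is correct and follows essentially the same route as the paper: the divergence theorem applied chamber-by-chamber to the vector field $f_{ta}\cdot a$, the sign identification $-n_U=(-1)^T(-1)^{U\circ e_U}e_U$, the collapse of the double sum to the even restricted arrangement via Lemma~\ref{lemma_parity_separation}, and the reduction from $L^1$ to $C_c^\infty$ by $L^1$-continuity of both sides using the Fubini/tube argument. The only cosmetic difference is that you integrate the fundamental theorem of calculus pointwise and swap the order of integration afterward, whereas the paper differentiates $t\mapsto\int_T f_{ta}\,dV$ under the integral sign and integrates in $t$ at the end; the content is identical.
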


In particular, if $K$ is a measurable subset of $V$ that has finite
volume, we obtain
\begin{align}
P(\Hf, K + a) - P(\Hf, K)
& =
2 \cdot
\sum_{e \in E}
(-1)^{Z_{0}(e) \circ e}
\cdot
\bl{a}{e}
\cdot
\int_{0}^{1}
P(\Hf_{e}, (K+ta) \cap H_{e})\: dt .
\end{align}

\begin{proof}[Proof of Theorem~\ref{theorem_reduction}.]
For this proof we assume that the vector space $V$ is $\Rrr^{n}$ with
the usual inner product $\bl{\cdot}{\cdot}$.
Hence we write $x = (x_{1}, \ldots, x_{n})$
and $a = (a_{1}, \ldots, a_{n})$.
Suppose first that the function~$f$ is $C^\infty$ with compact support.
Next, let $F:\R^n \longrightarrow\R^n$ be the vector field $F(x) = f(x-ta) \cdot a$.
Note that
$$
\divv(F)
=
\sum_{i=1}^{n} a_{i} \frac{\partial}{\partial x_{i}} f(x-ta)
=
- \frac{\partial}{\partial t} f(x-ta) .
$$
Let $T$ be a chamber of $\Hf$.
The function $t \longmapsto\int_T f_{ta}(x)\:dV$ is also $C^\infty$.
By the Leibniz integral rule and Gauss's divergence theorem
we have
\begin{align}
\label{equation_Gauss}
\frac{d}{dt}\int_T f_{ta}\:dV
& =
\int_T \frac{\partial}{\partial t} f(x-ta) \: dV  \\
& =
\int_T \divv(F) \: dV
\nonumber \\
& =
- 
\int_{\partial T} F \cdot m \: dS ,
\nonumber
\end{align}
where $m$ is the unit normal vector pointing outward from the chamber~$T$.

Let $\Tf^{\prime}$ be the collection of $(n-1)$-dimensional faces
in the arrangement~$\Hf$.
We call the elements in $\Tf^{\prime}$ {\em subchambers}
since they are one dimension
less than that of the chambers in~$\Tf$.
Note that each subchamber $U \in \Tf^{\prime}$ is a subset of exactly
one hyperplane in~$\Hf$.
Let $e_{U}$ denote the unique vector $f$ in $E$ such that $U \subseteq H_{f}$.

The normal vector $m$ at a point in $U \subseteq \partial T$ is $\pm e_{U}$.
Since $m$ points out from $T$ we have
$- (-1)^{T} \bl{a}{m} = (-1)^{U \circ m} \bl{a}{m} = (-1)^{U \circ e_{U}} \bl{a}{e_{U}}$,
where the last equality is true since either both factors change sign or neither does.
We conclude that the last integral
appearing in equation~\eqref{equation_Gauss}
is the sum over all subchambers $U$ of $\Hf$ included in
$\partial T$,
that is,
\begin{align*}
\frac{d}{dt}\int_T f_{ta}\:dV
& =
(-1)^T(-1)^{U\circ e_U} \bl{a}{e_{U}} \int_U f(x-ta) \: dS .
\end{align*}
Also note that the surface differential $dS$ is the natural volume on~$U$.
We deduce equation~\eqref{equation_reduction_in_integral_form_by_chamber} by
integrating both sides, and this proves point~(i) in the case
when $f$ is $C^\infty$ with compact support.

To deduce the general case of (i) from the case of a $C^\infty$
function with compact support, it suffices to prove that
both sides of equation~\eqref{equation_reduction_in_integral_form_by_chamber}
are continuous for the $L^1$ norm, because the space of
$C^\infty$ functions with compact support is dense in the set
of $L^1$ functions for that norm.
This is clear for the left-hand side of
equation~\eqref{equation_reduction_in_integral_form_by_chamber}. 
To show
the continuity of the right-hand side, we need to see that
for every subchamber $U$ of $\Hf$ such that $(a,e_U)\not=0$
the function
$f \longmapsto \int_0^1\int_U f(x-ta) \: dV_{H_{e_U}}(x)dt$ is continuous
for the $L^1$ norm. As $(a,e_U)\not=0$, we know that
$a\not\in H_{e_U}$.
By Fubini's theorem, the double integral
$\int_0^1\int_U f(x-ta) \: dV_{H_{e_U}}(x)dt$ equals, up to a constant,
the integral of $f$ on the set $\{z+ta : z\in U,\ 0\leq t\leq 1\}$
for the measure $V$ on~$\Rrr^{n}$, so the result follows.

We now prove point~(ii). Let $f:V\fl\R$ be an $L^1$ function.
Summing equation~\eqref{equation_reduction_in_integral_form_by_chamber}
over all chambers $T$ of $\Hf$ and noting that each
subchamber $U$ appears in the boundary of exactly two chambers,
we obtain
\begin{align}
P(\Hf,f_a)-P(\Hf,f)
& =
2 \cdot
\sum_{U \in \Tf^{\prime}}
(-1)^{U \circ e_{U}}
\cdot
\bl{a}{e_{U}}
\cdot
\int_0^1\int_U f(x-ta) \: dV_{H_{e_U}}\:dt .
\label{equation_sum_over_subchambers}
\end{align}

To simplify the expression~\eqref{equation_sum_over_subchambers},
consider a subchamber~$U$.
It lies in a unique hyperplane~$H_{e}$,
where $e = e_{U}$.
Furthermore, the subchamber $U$ is contained in a unique chamber~$Z$ of
the even restricted arrangement~$\Hf_{e}$.
Pick a base chamber~$U_{0}(e)$ of the restricted arrangement
$\Hf_{e}^{\prime\prime}$ inside the chamber $Z_{0}(e)$.
We then have by Lemma~\ref{lemma_parity_separation}
\begin{align*}
(-1)^{U \circ e}
=
(-1)^{U_{0}(e) \circ e} \cdot (-1)^{S(U_{0}(e) \circ e, U \circ e)}
=
(-1)^{Z_{0}(e) \circ e} \cdot (-1)^{S(Z_{0}(e), Z)}
=
(-1)^{Z_{0}(e) \circ e} \cdot (-1)^{Z} .
\end{align*}
Hence we can collect terms in the sum in equation~\eqref{equation_sum_over_subchambers}
by first summing over vectors~$e$ in~$E$ and then chambers~$Z$
of the even restricted arrangement~$\Hf_{e}$. This gives that
\begin{align*}
P(\Hf,f_a)-P(\Hf,f)
& =
2 \cdot
\sum_{e \in E}
\sum_{Z \in \Tf(\Hf_{e})}
(-1)^{Z_{0}(e) \circ e} \cdot (-1)^{Z}
\cdot
\bl{a}{e}
\cdot
\int_0^1\int_Z f(x-ta) \: dV_{H_e}\:dt \\
& =
2 \cdot
\sum_{e \in E}
(-1)^{Z_{0}(e) \circ e}
\cdot
\bl{a}{e}
\cdot
\int_0^1 P(\Hf_{e},\left.f_{ta}\right|_{H_{e}})\:dt,
\end{align*}
which is equation~\eqref{equation_reduction_in_integral_form}.
\end{proof}

\begin{table}
$$
\begin{array}{c | l | c | l}
\Hf & \Hf_{e} & \Hf & \Hf_{e} \\ \hline
A_{n} & A_{n-2} &
E_{7} & D_{6} \\
B_{2} & A_{1} &
E_{8} & E_{7} \\
B_{3} & B_{2} \text{ or } A_{1}^2 &
F_{4} & B_{3} \\
B_{n} & B_{n-1} \text{ or } A_{1} \times B_{n-2}, \text{ for } n \geq 4 &
H_{3} & A_{1}^2 \\
D_{4} & A_{1}^{3} &
H_{4} & H_{3} \\
D_{5} & A_{1} \times A_{3} &
I_{2}(2k) & A_{1} \\
D_{n} & A_{1} \times D_{n-2}, \text{ for } n \geq 6 &
I_{2}(2k+1) & \emptyset
\end{array}
$$
\caption{The possible types of even restricted subarrangements for
simple Coxeter arrangements
of dimension $\geq 2$.
For the type $A_{n}$ arrangement
the even restricted subarrangements have type $A_{n-2}$ inside
an $(n-1)$-dimensional space, that is, the even restricted subarrangements are not essential.
Similarly for the odd dihedral arrangement $I_{2}(2k+1)$,
each of the even restricted subarrangements is the empty $1$-dimensional arrangement.}
\label{table_even_subarrangements}
\end{table}

\section{Even restriction sequences}
\label{section_condition_E}

For a sequence $(e_{1}, e_{2}, \ldots , e_{r})$ in the Cartesian power $E^{r}$,
define the subspace
$H_{e_{1},\ldots,e_{r}}$ to be the intersection
$H_{e_{1},\ldots,e_{r}} = \bigcap_{i=1}^{r} H_{e_i}$.
\begin{definition}
An {\em even restriction sequence}
$(e_{1}, e_{2}, \ldots , e_{r})$ and
its associated hyperplane arrangement $\Hf_{e_{1},\ldots,e_{r}}$ on
$H_{e_{1},\ldots,e_{r}}$ is defined recursively by:
\begin{itemize}
\item[(a)]
The empty sequence is an even restriction sequence.
This is the case $r=0$ and we set $\Hf_{\emptyset} = \Hf$.

\item[(b)]
If $r \geq 1$, the sequence 
$(e_{1}, e_{2}, \ldots , e_{r-1})$ is an even restriction sequence
and the subspace $H_{e_{1}, e_{2}, \ldots , e_{r}}$
is a hyperplane in
the arrangement
$\Hf_{e_{1}, e_{2}, \ldots , e_{r-1}}$
then
$(e_{1}, e_{2}, \ldots , e_{r})$ is an even restriction sequence.
Furthermore, let $f\in H_{e_{1}, e_{2}, \ldots , e_{r-1}}$
be a unit normal vector of the hyperplane
$H_{e_{1}, e_{2}, \ldots , e_{r}}$,
that is,
$H_{e_{1}, e_{2}, \ldots , e_{r}}
=
\{x \in H_{e_{1}, e_{2}, \ldots , e_{r-1}} : \bl{f}{x} = 0\}$.
We then set
$\Hf_{e_{1}, e_{2}, \ldots , e_{r}}$ to be
the even restricted arrangement
$\left(\Hf_{e_{1}, e_{2}, \ldots , e_{r-1}}\right)_{f}$
in the subspace
$H_{e_{1}, e_{2}, \ldots , e_{r}}$.
\end{itemize}
Finally, let $P_{r} \subseteq E^{r}$ denote the set of all
even restriction sequences of length $r$.
\end{definition}

We introduce the following definitions.

\begin{definition}
Let $\Hf = \{H_{e} : e \in E\}$ be a hyperplane arrangement.
\begin{itemize}
\item[(a)] We say that an even restriction sequence
$(e_{1},\ldots,e_{r})\in P_r$ is \emph{maximal} if
it cannot be extended to a longer even restriction sequence.
More formally, 
there is no $e_{r+1}\in E$ such that $(e_{1},\ldots,e_{r},e_{r+1})$ is an
even restriction sequence. We denote by $P$ the set of
maximal even restriction sequences
of the arrangement $\Hf$.

\item[(b)]
We say that the arrangement $\Hf$ is \emph{even} if
for every $r\geq 0$ and every even restriction sequence
$(e_{1},\ldots,e_{r})\in P_{r}$
the arrangement $\Hf_{e_{1},\ldots,e_{r}}$ is essential.

\end{itemize}
\label{def_even}
\end{definition}

\begin{lemma}
Let $\Hf$ be an $n$-dimensional hyperplane arrangement.
\begin{itemize}
\item[(i)]  If $(e_{1},\ldots,e_{r})\in P_{r}$ then $\dim(H_{e_{1},\ldots,e_{r}})=
n-r$ (in particular, the vectors $e_{1},\ldots,e_{r}$ are linearly independent)
and every element of $\Hf_{e_{1},\ldots,e_{r}}$ is an intersection of
hyperplanes of $\Hf$.

\item[(ii)] We have $P_{r}=\varnothing$ for $r>n$.

\item[(iii)] The following statements are equivalent:
\begin{itemize}
\item[(a)]
The arrangement $\Hf$ is even.
\item[(b)]
Every even restriction sequence
can be extended to an even restriction sequence
of length~$n$, that is,
for every $0\leq r\leq n$ and every $(e_{1},\ldots,e_{r})\in P_{r}$,
there exist $e_{r+1},\ldots,e_{n}\in E$ such
that $(e_{1},e_{2},\ldots,e_{n})\in P_{n}$.
\item[(c)] We have $P=P_n$.
\end{itemize}
\end{itemize}
\label{lemma_nested_even_arrangements}
\end{lemma}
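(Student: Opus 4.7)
The plan is to establish (i) by induction on $r$, deduce (ii) as an immediate consequence, and prove (iii) as a cycle of implications (a)$\Rightarrow$(b)$\Rightarrow$(c)$\Rightarrow$(a).

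For (i), the base case $r=0$ is immediate since $H_\emptyset = V$ has dimension $n$ and $\Hf_\emptyset = \Hf$. For the inductive step, let $(e_1,\ldots,e_r) \in P_r$. By definition, $(e_1,\ldots,e_{r-1}) \in P_{r-1}$ and $H_{e_1,\ldots,e_r}$ is a hyperplane of $\Hf_{e_1,\ldots,e_{r-1}}$, so it has codimension one in $H_{e_1,\ldots,e_{r-1}}$; combined with the inductive hypothesis, this yields $\dim H_{e_1,\ldots,e_r} = n-r$, which also forces $e_1,\ldots,e_r$ to be linearly independent. By construction, $\Hf_{e_1,\ldots,e_r}$ is a subarrangement of $\Hf_{e_1,\ldots,e_{r-1}}$ restricted to $H_{e_1,\ldots,e_r}$, so each of its elements is the intersection of $H_{e_1,\ldots,e_r}$ with a hyperplane of $\Hf_{e_1,\ldots,e_{r-1}}$; by induction the latter is already an intersection of hyperplanes of $\Hf$. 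Unwinding the recursion one more step yields the sharper statement that every hyperplane of $\Hf_{e_1,\ldots,e_r}$ has the form $H_{e_1,\ldots,e_r,f}$ for some $f \in E$, which will be essential below. Part (ii) is immediate: $n-r \geq 0$ forces $r \leq n$.

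For (iii), I would prove the cycle (a)$\Rightarrow$(b)$\Rightarrow$(c)$\Rightarrow$(a). For (a)$\Rightarrow$(b), induct on $n-r$; the case $r=n$ is trivial, and for $r<n$ the arrangement $\Hf_{e_1,\ldots,e_r}$ lives in a positive-dimensional space, so essentiality forces it to be non-empty (the empty arrangement is essential only in the zero space), and the sharper form of (i) supplies an $f \in E$ with $H_{e_1,\ldots,e_r,f}$ a hyperplane of $\Hf_{e_1,\ldots,e_r}$, extending the sequence. For (b)$\Rightarrow$(c), the inclusion $P_n \subseteq P$ is immediate from (ii), while $P \subseteq P_n$ is exactly what (b) asserts about maximal sequences. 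For (c)$\Rightarrow$(a), given $(e_1,\ldots,e_r) \in P_r$, let $W$ denote the intersection of all hyperplanes of $\Hf_{e_1,\ldots,e_r}$. The key observation is that $W$ propagates along extensions: any hyperplane of $\Hf_{e_1,\ldots,e_{r+1}}$ has the form $H_{e_1,\ldots,e_{r+1}} \cap H'$ for some hyperplane $H'$ of $\Hf_{e_1,\ldots,e_r}$, and both factors contain $W$. Extending $(e_1,\ldots,e_r)$ to a maximal, hence by (c) length-$n$, sequence $(e_1,\ldots,e_n) \in P_n$ places $W$ inside $H_{e_1,\ldots,e_n} = \{0\}$ by (i), so $W = \{0\}$ and $\Hf_{e_1,\ldots,e_r}$ is essential.

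The main point requiring care is the strengthened form of (i). The recursive definition of $\Hf_{e_1,\ldots,e_r}$ passes through a unit normal vector of the next restricting hyperplane, which lives in a subspace of $V$ and is not a priori an element of $E$; what the sharpened (i) supplies is that at each stage one may nonetheless track the original indices in $E$ rather than the projected normals, which is what allows the extension step in (a)$\Rightarrow$(b) to produce a genuine element of $E$. Once this identification is in hand, the remaining arguments are direct combinatorial and linear-algebraic manipulations.
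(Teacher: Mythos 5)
Your proof is correct and follows essentially the same route as the paper's: induction on $r$ for (i), (ii) as an immediate consequence, and the extension argument for (iii) via descending induction. The cosmetic differences are that you run the cycle (a)$\Rightarrow$(b)$\Rightarrow$(c)$\Rightarrow$(a) where the paper proves (a)$\Rightarrow$(b), (b)$\Rightarrow$(a) and then notes (b)$\Leftrightarrow$(c), and your ``$W$ propagates'' argument for (c)$\Rightarrow$(a) is a slight repackaging of the paper's observation that $H_{e_{1},\ldots,e_{n}}=\{0\}$ is an intersection of hyperplanes of $\Hf_{e_{1},\ldots,e_{r}}$; your explicit strengthening of (i) to ``every hyperplane of $\Hf_{e_{1},\ldots,e_{r}}$ has the form $H_{e_{1},\ldots,e_{r},f}$ with $f\in E$'' is a sound and slightly more careful rendering of what the paper invokes implicitly in the extension step.
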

\begin{proof}
If $1\leq i \leq r$ then by definition of $H_{e_{1},\ldots,e_i}$
we have $\dim(H_{e_{1},\ldots,e_i})=\dim(H_{e_{1},\ldots,e_{i-1}})-1$. This
implies the first statement of~(i). We prove the second statement of~(i)
by induction on $r$. It is clear if $r=0$, so assume that $r\geq 1$ and
that the conclusion holds for $\Hf_{e_{1},\ldots,e_{r-1}}$. As
$\Hf_{e_{1},\ldots,e_{r}}$ is a subarrangement of the restriction of
$\Hf_{e_{1},\ldots,e_{r-1}}$ to the hyperplane $H_{e_{1},\ldots,e_{r}}=H_{e_{1},\ldots,
e_{r-1}}\cap H_{e_{r}}$ of $H_{e_{1},\ldots,e_{r-1}}$, each element of
$\Hf_{e_{1},\ldots,e_{r}}$ is the intersection of $H_{e_{r}}$ and an element
of $\Hf_{e_{1},\ldots,e_{r-1}}$. This implies~(i).

Statement~(ii) follows immediately from (i).

We now prove statement~(iii).
Suppose that (a) holds. We prove
(b) by descending induction on $r$. If $r=n$, there is nothing to
prove. Suppose that $r<n$ and that we know the statement for~$r+1$,
and let $(e_{1},\ldots,e_{r})\in P_{r}$. As the arrangement
$\Hf_{e_{1},\ldots,e_{r}}$ is an essential arrangement in the
$(n-r)$-dimensional space $H_{e_{1},\ldots,e_{r}}$, and as $n-r>0$, this
arrangement has at least one hyperplane.
By the second statement of (i), there exists
$e_{r+1}\in E$ such that $(e_{1},\ldots,e_{r+1}) \in P_{r+1}$. We finish the proof
by applying the induction hypothesis to $(e_{1},\ldots,e_{r+1})$.

Conversely, suppose that (b) holds, and let $(e_{1},\ldots,e_{r})\in P_{r}$,
with $0\leq r\leq n$. By~(b), there exist vectors $e_{r+1},\ldots,e_{n} \in E$
such that $(e_{1},\ldots,e_{n})\in P_{n}$. 
By~(i), we know that $H_{e_{1},\ldots,e_{n}}=\{0\}$ and that
$H_{e_{1},\ldots,e_{n}}$ is an intersection
of hyperplanes of $\Hf_{e_{1},\ldots,e_{r}}$.
This implies that $\Hf_{e_{1},\ldots,e_{r}}$ is essential, and hence~(a) holds.

Finally, in light of statement (ii), it is clear that (b) and (c)
are equivalent.
\end{proof}

\begin{definition}
If $(e_{1},\ldots,e_{r})\in P$ then the arrangement $\Hf_{e_{1},\ldots,e_{r}}$
is empty, so it only has one chamber $Z=H_{e_{1},\ldots,e_{r}}$.
By the discussion before Theorem~\ref{theorem_reduction}, we have
a well-defined sign $(-1)^{Z\circ e_{r} \circdots e_{1}}$,
and we denote this sign by $(-1)^{e_{r} \circdots e_{1}}$.
\label{sign_maxmimal_even_restriction_sequence}
\end{definition}

\begin{remark}
{\rm
\begin{itemize}
\item[(1)] Let $(e_{1},\ldots,e_{r})\in P$. Then $(-1)^{e_{r} \circdots e_{1}}$
is the sign of any chamber of $\Hf$ that contains the vector
$v+\epsilon e_{r}+\epsilon^2 e_{r-1}+\cdots+\epsilon^r e_{1}$, for
$v\in H_{e_{1},\ldots,e_{r}}$ nonzero and $\epsilon>0$ small enough.

\item[(2)] Suppose that $\Hf$ is an even arrangement. Then $P=P_n$ by
Lemma~\ref{lemma_nested_even_arrangements}. So for any
$(e_{1},\ldots,e_{n})\in P_n$, we have a sign $(-1)^{e_{n} \circdots e_{1}}$,
which is the sign of the chamber of $\Hf$ containing the vector
$\epsilon e_{n} + \epsilon^2 e_{n-1} + \cdots + \epsilon^n e_{1}$ for
$\epsilon>0$ small enough.
\end{itemize}
}
\end{remark}

For $r$ a nonnegative integer, we introduce the following equivalence
relation on the Cartesian power $E^{r}$: $(e_{1},\ldots,e_{r})\sim(f_{1},\ldots,f_{r})$ if and
only if $H_{e_{1},\ldots,e_i}=H_{f_{1},\ldots,f_i}$ for every $0 \leq i \leq r$.
The following facts are straightforward consequences of the definition of this equivalence relation.
\begin{itemize}
\item[(1)]
If $0\leq i\leq r$,  $(e_{1},\ldots,e_{r}) \in E^{r}$
and $(f_{1},\ldots,f_i) \in E^{i}$ then $(e_{1},\ldots,e_i)\sim
(f_{1},\ldots,f_i)$ if and only if $(e_{1},\ldots,e_{r})\sim(f_{1},\ldots,f_i,e_{i+1},
\ldots,e_{r})$.

\item[(2)]
If $(e_{1},\ldots,e_{r}) \sim (f_{1},\ldots,f_{r})$
then $(e_{1},\ldots,e_{r}) \in P_{r}$ if and only if $(f_{1},\ldots,f_{r})\in P_{r}$.
If this condition holds then $\Hf_{e_{1},\ldots,e_{r}}=\Hf_{f_{1},\ldots,f_{r}}$.

\item[(3)] 
If $(e_{1},\ldots,e_{r}) \sim (f_{1},\ldots,f_{r})$
then $(e_{1},\ldots,e_{r}) \in P$ if and only if $(f_{1},\ldots,f_{r})\in P$.
If this condition holds then $(-1)^{e_{r} \circdots e_{1}}=
(-1)^{f_{r} \circdots f_{1}}$.
\end{itemize}

We now specialize these notions to the case of Coxeter arrangements.
Suppose that $\Hf$ is a Coxeter arrangement and let $r$ be a
nonnegative integer.
Then we denote by $E_{r}^{(0)} \subseteq E^{r}$ 
the set of sequences $(e_{1},\ldots,e_{r})$
of pairwise orthogonal elements of $E$.

\begin{proposition}
Suppose that $\Hf = (H_{e})_{e \in E}$ is a Coxeter arrangement
and let $r$ be a nonnegative integer.
\begin{itemize}
\item[(i)] Let $(e_{1},\ldots,e_{r})\in P_{r}$.
Then $\Hf_{e_{1},\ldots,e_{r}}$ is a Coxeter arrangement
on $H_{e_{1},\ldots,e_{r}}$ given by the 
finite set of vectors $E \cap H_{e_{1},\ldots,e_{r}}$.

\item[(ii)] If $(e_{1},\ldots,e_{r}),(f_{1},\ldots,f_{r})\in E_{r}^{(0)}$
and $(e_{1},\ldots,e_{r})\sim(f_{1},\ldots,f_{r})$ then the
equality $(e_{1},\ldots,e_{r})$ $=$ $(f_{1},\ldots,f_{r})$ holds.

\item[(iii)] Let $(e_{1},\ldots,e_{r})\in E^{r}$. Then $(e_{1},\ldots,e_{r})$ is
in $P_{r}$ if and only if there exists $(f_{1},\ldots,f_{r})\in E_{r}^{(0)}$
such that $(e_{1},\ldots,e_{r})\sim(f_{1},\ldots,f_{r})$.
Furthermore, this sequence $(f_{1},\ldots,f_{r})$ is necessarily unique by~(ii).

In particular, we have $E_{r}^{(0)} \subseteq P_{r}$, and this
inclusion induces a bijection $E_{r}^{(0)}\simeq P_r/\sim$.

\item[(iv)] Let $(e_{1},\ldots,e_{r})\in E_{r}^{(0)}$. Then
$(e_{1},\ldots,e_{r})\in P$ if and only if $E\cap\{e_{1},\ldots,e_{r}\}^\perp=
\{0\}$.

\item[(v)] There exists an integer $r$ with $0 \leq r \leq n$ such that
$P=P_r$. 

\end{itemize}
\label{proposition_nested_even_arrangements_Coxeter}
\end{proposition}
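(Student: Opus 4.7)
The plan is to prove (i)--(iv) by direct inductive arguments and then recognize (v) as equivalent to a combinatorial statement on root systems, which I would then reduce to the irreducible case.

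For (i), I induct on $r$. The base case $r=0$ is the hypothesis that $\Hf$ is Coxeter with vector set $E=E\cap V$. For the inductive step, the condition $(e_{1},\ldots,e_{r})\in P_{r}$ means $H_{e_{1},\ldots,e_{r}}$ is a hyperplane of $\Hf_{e_{1},\ldots,e_{r-1}}$, which by induction is Coxeter with vector set $E\cap H_{e_{1},\ldots,e_{r-1}}$; therefore this hyperplane corresponds to some $f\in E\cap H_{e_{1},\ldots,e_{r-1}}$, and Remark~\ref{remark_even_arrangement_Coxeter} applied inside $H_{e_{1},\ldots,e_{r-1}}$ yields that $\Hf_{e_{1},\ldots,e_{r}}=(\Hf_{e_{1},\ldots,e_{r-1}})_{f}$ is Coxeter with vector set $(E\cap H_{e_{1},\ldots,e_{r-1}})\cap H_{f}=E\cap H_{e_{1},\ldots,e_{r}}$. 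For (ii), I also induct on $r$: in the inductive step, by hypothesis $e_{i}=f_{i}$ for $i<r$, and then $e_{r}, f_{r}$ are both unit vectors of the subspace $U=H_{e_{1},\ldots,e_{r-1}}$ having equal orthogonal hyperplanes $U\cap H_{e_{r}}=U\cap H_{f_{r}}$ inside $U$; this forces $f_{r}=\pm e_{r}$, and then $f_{r}=e_{r}$ since $E\cap(-E)=\varnothing$.

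For (iii), both inclusions are proved by induction. The inclusion $E_{r}^{(0)}\subseteq P_{r}$ is immediate: orthogonality of $e_{r}$ to $e_{1},\ldots,e_{r-1}$ places $e_{r}$ in the vector set $E\cap H_{e_{1},\ldots,e_{r-1}}$ of $\Hf_{e_{1},\ldots,e_{r-1}}$ (by (i)), so $H_{e_{1},\ldots,e_{r}}$ is a hyperplane of this arrangement. For the converse, given $(e_{1},\ldots,e_{r})\in P_{r}$, use induction to produce $(f_{1},\ldots,f_{r-1})\in E_{r-1}^{(0)}$ with $(e_{1},\ldots,e_{r-1})\sim(f_{1},\ldots,f_{r-1})$, and then apply (i) to $\Hf_{f_{1},\ldots,f_{r-1}}$ to find a unit vector $f_{r}\in E\cap H_{f_{1},\ldots,f_{r-1}}$ whose hyperplane in $H_{f_{1},\ldots,f_{r-1}}$ equals $H_{e_{1},\ldots,e_{r}}$; orthogonality of $f_{r}$ to $f_{1},\ldots,f_{r-1}$ is automatic. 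Part (iv) is then formal: by (iii) and (ii), extensions of $(e_{1},\ldots,e_{r})\in E_{r}^{(0)}$ in $P_{r+1}$ correspond, up to $\sim$, to vectors $e_{r+1}\in E$ orthogonal to $e_{1},\ldots,e_{r}$, so maximality of $(e_{1},\ldots,e_{r})$ in $P$ is equivalent to the nonexistence of such an $e_{r+1}$.

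The main obstacle is (v). By (iii) and (iv), the statement reduces to the combinatorial claim that every maximal pairwise orthogonal subset of $E$ has the same cardinality. The product decomposition of Coxeter arrangements reduces this to the irreducible case: if $\Hf=\Hf_{1}\times\cdots\times\Hf_{k}$ then roots from different factors are automatically orthogonal, so maximal orthogonal subsets of $E=E_{1}\sqcup\cdots\sqcup E_{k}$ are disjoint unions of maximal orthogonal subsets of the $E_{i}$, and if the claim holds for each irreducible factor it holds for the product. For each irreducible Coxeter type the claim is then verified via the explicit root system: for type $A_{n}$ it follows from the elementary combinatorial fact that every maximal matching in $K_{n+1}$ is a maximum matching; the remaining types $B_{n}$, $D_{n}$, $E_{6}$, $E_{7}$, $E_{8}$, $F_{4}$, $H_{3}$, $H_{4}$, and $I_{2}(m)$ can be handled analogously using their explicit root descriptions, consistent with the even restricted subarrangements listed in Table~\ref{table_even_subarrangements}. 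This case-by-case verification of the uniqueness of cardinality for maximal orthogonal root subsets is the most substantive ingredient of the proof.
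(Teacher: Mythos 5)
Your arguments for parts (i)--(iv) are essentially identical to the paper's: inductions on $r$ using Remark~\ref{remark_even_arrangement_Coxeter} for~(i), the spanning/colinearity argument for~(ii), passage through a $\sim$-equivalent orthogonal representative for~(iii), and the formal deduction of~(iv). Where you diverge is in part~(v). The paper handles the claim that all maximal pairwise-orthogonal subsets of $E$ have equal cardinality by citing the literature: Lemma~5.7 of Herb together with the conjugacy of $2$-structures under the Weyl group for crystallographic root systems, and Appendix~B of the authors' earlier paper~\cite{Ehrenborg_Morel_Readdy} for the general (non-crystallographic) case. You instead propose a product decomposition reducing to irreducible factors, which is a valid and clean reduction (orthogonality across factors is automatic, so maximal orthogonal sets decompose), followed by a case-by-case check over the finite list of irreducible types. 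Your verification for type~$A_n$ via maximal matchings in $K_{n+1}$ is correct (in a complete graph any non-maximum matching leaves two adjacent uncovered vertices), and your approach buys self-containedness and elementarity at the price of a finite but nontrivial case check that you only sketch for the remaining types ($B_n$, $D_n$, $E_6$, $E_7$, $E_8$, $F_4$, $H_3$, $H_4$, $I_2(m)$). The paper's route buys brevity and uniformity at the price of relying on external results. To make your version airtight you would need to actually carry out the remaining cases, with special care for the non-crystallographic types $H_3$ and $H_4$ where no Weyl-group/lattice structure is available and the check really does come down to inspecting the root set directly.
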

\begin{proof}
We prove (i) by induction on $r$. The result is clear if $r=0$.
So assume that $r\geq 1$ and that we know the result for
$r-1$, and let $(e_{1},\ldots,e_{r}) \in P_{r}$. By the induction hypothesis,
the arrangement $\Hf_{e_{1},\ldots,e_{r-1}}$ is a Coxeter arrangement, and
its hyperplanes are the codimension one subspaces
orthogonal to the vectors in $E \cap H_{e_{1},\ldots,e_{r-1}}$. 
By Proposition~\ref{prop_even_arrangement_Coxeter},
the arrangement $\Hf_{e_{1},\ldots,e_{r}}$ is
also Coxeter and
the elements of $\Hf_{e_{1},\ldots,e_{r}}$ are exactly the intersections
$H_{e_{1},\ldots,e_{r}}\cap H_{f}$, where $f\in E\cap H_{e_{1},\ldots,e_{r-1}}$ is
such that $\bl{f}{e_{r}} = 0$. In other words, they are the intersections
$H_{e_{1},\ldots,e_{r}}\cap H_{f}$, with $f\in H_{e_{1},\ldots,e_{r}}$. This finishes
the proof.

We prove (ii) by induction on $r$. If $r \leq 1$ the
result is clear, so assume that $r\geq 2$ and that we know
the result for $r-1$. Let $(e_{1},\ldots,e_{r}),(f_{1},\ldots,f_{r})\in
E_{r}^{(0)}$ such that $(e_{1},\ldots,e_{r})\sim(f_{1},\ldots,f_{r})$. Then
$(e_{1},\ldots,e_{r-1}),(f_{1},\ldots,f_{r-1})\in E_{r-1}^{(0)}$ and
$(e_{1},\ldots,e_{r-1})\sim(f_{1},\ldots,f_{r-1})$, so $e_i=f_i$ for
$1\leq i\leq r-1$ by the induction hypothesis. The assumption that
$H_{e_{1},\ldots,e_{r}}=H_{e_{1},\ldots,e_{r-1},f_{r}}$ shows that
the families $(e_{1},\ldots,e_{r})$ and $(e_{1},\ldots,e_{r-1},f_{r})$ span
the same subspace of $V$ and, as $e_{r},f_{r}\in\Span(e_{1},\ldots,e_{r-1})^{\perp}$,
we conclude that $e_{r}$ and $f_{r}$ are colinear, hence equal. 

We prove (iii), again by induction on $r$. If $r\leq 1$ then
$E^{r}=P_{r}=E_{r}^{(0)}$ and the result follows. Suppose that $r\geq 2$
and that we know the result for $r-1$. Let $(e_{1},\ldots,e_{r})\in E^{r}$.
Suppose that $(e_{1},\ldots,e_{r})\in P_{r}$.
Then $(e_{1},\ldots,e_{r-1}) \in P_{r-1}$, so by the induction hypothesis and observations~(1) and~(2) above
we may assume that
$(e_{1},\ldots,e_{r-1})\in E_{r-1}^{(0)}$. As $H_{e_{1},\ldots,e_{r}}$ is an element
of $\Hf_{e_{1},\ldots,e_{r-1}}$, there exists by (i) an element
$f$ of $E\cap H_{e_{1},\ldots,e_{r-1}}$ such that $H_{e_{1},\ldots,e_{r}}=H_{f}\cap
H_{e_{1},\ldots,e_{r-1}}$. Then $(e_{1},\ldots,e_{r-1},f)\in E_{r}^{(0)}$ and
$(e_{1},\ldots,e_{r-1},f)\sim(e_{1},\ldots,e_{r})$, so we are done.
Now suppose that there exists $(f_{1},\ldots,f_{r})\in E_{r}^{(0)}$
such that $(e_{1},\ldots,e_{r})\sim(f_{1},\ldots,f_{r})$. By observation
(2) above, it suffices to show that $(f_{1},\ldots,f_{r})\in P_{r}$, so we may
assume that $(e_{1},\ldots,e_{r})\in E_{r}^{(0)}$. By the induction hypothesis,
we have $(e_{1},\ldots,e_{r-1})\in P_{r-1}$ and by~(i) the hyperplane
$H_{e_{1},\ldots,e_{r}}$ is an element of $\Hf_{e_{1},\ldots,e_{r-1}}$, which shows
that $(e_{1},\ldots,e_{r})\in P_{r}$.

Point (iv) immediately follows from point (iii) and from the
definition of $P$.

To prove (v), we need to see that all maximal subsets of pairwise
orthogonal elements of $E$ have the same cardinality, which is
a well-known fact. For crystallographic root systems
this follows, for example, from Lemma~5.7 
of~\cite{Herb-DSC} and from the fact that all $2$-structures are
conjugated under the Weyl group, as asserted before Theorem~5.5
of~\cite{Herb-DSC}; for the general case, see Appendix~B
of~\cite{Ehrenborg_Morel_Readdy}.
\end{proof}

\begin{corollary}
Suppose $\Hf$ is a Coxeter arrangement with Coxeter group~$W$
in an $n$-dimensional vector space~$V$. Then the following four statements
are equivalent:
\begin{itemize}
\item[(a)]
The arrangement $\Hf$ is even.
\item[(b)]
Every sequence of pairwise orthogonal elements of $E$
can be extended to a sequence of pairwise orthogonal elements of length $n$,
that is,
for $0\leq r\leq n$ and $(e_{1},\ldots,e_{r}) \in E_{r}^{(0)}$, there exist
vectors $e_{r+1},\ldots,e_{n}\in E$ such that $(e_{1},\ldots, e_{n}) \in E_{n}^{(0)}$.
\item[(c)]
The map $-\id_V$ belongs to the Coxeter group~$W$.
\item[(d)]
The arrangement $\Hf$ is a Cartesian product of arrangements of type
$A_{1}$, $B_{n}$ for $n\geq 2$, $D_{n}$ for $n\geq 4$ even,
$E_{7}$, $E_{8}$,
$F_{4}$,
$H_{3}$, $H_{4}$ and 
$I_{2}(2k)$ for $k\geq 2$.
\end{itemize}
\label{corollary_nested_even_arrangements_Coxeter}
\end{corollary}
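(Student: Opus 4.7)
The plan is to prove the four statements equivalent by first establishing the chain (a)$\Leftrightarrow$(b)$\Leftrightarrow$(c) from the results already developed in this section, and then reducing (c)$\Leftrightarrow$(d) to the classification of irreducible finite Coxeter groups. The equivalence (a)$\Leftrightarrow$(b) should be purely formal: by Lemma~\ref{lemma_nested_even_arrangements}(iii), (a) amounts to $P=P_n$, and by Proposition~\ref{proposition_nested_even_arrangements_Coxeter}(iii) together with observations~(1)--(2) on the relation $\sim$, every $\sim$-equivalence class in $P_r$ has a unique representative in $E_r^{(0)}$, and extendability of an element of $P_r$ within $P_{r+1}$ corresponds precisely to extendability of its representative within $E_{r+1}^{(0)}$. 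Iterating this, $P=P_n$ reads exactly as statement~(b).

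For (b)$\Leftrightarrow$(c), the forward direction is easy: for $(e_1,\ldots,e_n)\in E_n^{(0)}$, the $e_i$ form an orthonormal basis of $V$, the reflections $s_{e_i}$ pairwise commute (their hyperplanes being mutually orthogonal), and their product sends each $e_i$ to $-e_i$, so it equals $-\id_V$. For the converse, I would pick a maximal orthogonal system $(e_1,\ldots,e_r)\in E_r^{(0)}$ and set $U=\{e_1,\ldots,e_r\}^\perp$; by Proposition~\ref{proposition_nested_even_arrangements_Coxeter}(iv), maximality gives $E\cap U=\emptyset$. The element $g=(-\id_V)\circ s_{e_1}\circdots s_{e_r}\in W$ then fixes $\Span(e_1,\ldots,e_r)$ pointwise and acts as $-\id$ on~$U$. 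By Steinberg's fixed-point theorem, the pointwise stabilizer of $\Span(e_1,\ldots,e_r)$ in $W$ is generated by the reflections $s_f$ with $f\in E\cap U=\emptyset$, hence is trivial; so $g=\id_V$, forcing $U=\{0\}$ and $r=n$. Combined with Proposition~\ref{proposition_nested_even_arrangements_Coxeter}(v), which tells us that all maximal orthogonal sequences in $E$ have the same length, this yields~(b).

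Finally, (c)$\Leftrightarrow$(d) is compatible with products of Coxeter arrangements---on one side, $-\id\in W_1\times W_2$ if and only if $-\id\in W_i$ for each $i$; on the other, the list in~(d) is closed under product---so it suffices to verify the equivalence on irreducible components. This reduces to the classical identification of the irreducible finite Coxeter groups whose longest element is $-\id$, which I would extract from the plates of~\cite{Bourbaki}. The main obstacle is the (c)$\Rightarrow$(b) step, which relies on Steinberg's fixed-point theorem: a nontrivial input from finite reflection group theory not otherwise used in the paper. An alternative route, should one wish to bypass Steinberg, is to establish (c)$\Leftrightarrow$(d) first via the classification, exhibiting an orthogonal basis of roots explicitly in each listed type, and then deduce~(b) as a corollary; this shifts the burden onto the case analysis but keeps the preceding sections self-contained.
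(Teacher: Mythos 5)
Your proposal is correct, and it differs from the paper's proof in a worthwhile way. The paper closes the cycle via (a)$\Leftrightarrow$(b), then (b)$\Leftrightarrow$(d) by a case analysis over simple types combined with an induction using Table~\ref{table_even_subarrangements}, then (b)$\Rightarrow$(c) (exactly your commuting-reflections argument), and finally (c)$\Rightarrow$(a) by induction on $\dim V$: it invokes Steinberg's theorem~\cite[Theorem~1.5]{Steinberg} to see that the stabilizer $W_e$ of a single root $e$ is generated by reflections and hence is the Coxeter group of $\Hf_e$, then observes that $-s_e\in W_e$ acts as $-\id_{H_e}$, so $\Hf_e$ again satisfies (c) and the induction closes. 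Your argument replaces that dimension induction with a single application of the same Steinberg theorem to the pointwise stabilizer of $\Span(e_1,\ldots,e_r)$ for a maximal orthogonal system: the element $(-\id_V)\circ s_{e_1}\circdots s_{e_r}\in W$ fixes that span pointwise, hence is trivial by Steinberg plus Proposition~\ref{proposition_nested_even_arrangements_Coxeter}(iv), which forces $\{e_1,\ldots,e_r\}^\perp=\{0\}$ and $r=n$. This is a cleaner, non-inductive route to (c)$\Rightarrow$(b), and you do not actually need to invoke Proposition~\ref{proposition_nested_even_arrangements_Coxeter}(v), since the argument already shows every maximal system has length $n$. The trade-off is that you offload (c)$\Leftrightarrow$(d) to the classical ``longest element $=-\id$'' classification in Bourbaki, where the paper gives a more self-contained treatment of (b)$\Leftrightarrow$(d) by tracking orthogonal systems of roots type by type; the alternative you sketch in the last sentence is in fact close in spirit to what the paper does for that equivalence.
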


The equivalence of (b), (c) and (d) is well-known.
For completeness we include a proof.

\begin{proof}[Proof of Corollary~\ref{corollary_nested_even_arrangements_Coxeter}.]
The equivalence of (a) and (b) follows from
Lemma~\ref{lemma_nested_even_arrangements} (iii)
by using statements~(ii) and~(iii) of 
Proposition~\ref{proposition_nested_even_arrangements_Coxeter}.

We prove that (b) and (d) are equivalent. As both conditions hold
for a Cartesian product of two arrangements if and only if they hold
for each arrangement, we may assume that $\Hf$ is a simple
Coxeter arrangement. 
If $\Hf$ is of type~$A_{n}$ for $n\geq 2$, $D_{n}$ for $n\geq 5$ odd, $I_{2}(m)$
for $m\geq 3$ odd or $E_{6}$, then $\Phi$ does not contain a family of
$n$ pairwise orthogonal pseudo-roots, so a fortiori condition (b) does
not hold. If $\Hf$ is of type~$A_{1}$, $B_{2}$ or $I_{2}(2k)$ with $k\geq 2$, then
condition (b) is clear.
Suppose that $\Hf$ is of type~$B_{n}$ with
$n\geq 3$, $D_{n}$ with $n\geq 4$ even, $E_{7}$, $E_{8}$, $F_{4}$, $H_{3}$ or $H_{4}$.
It suffices to show that if $e \in E$ then the arrangement on~$H_{e}$
given by the vectors of $E\cap H_{e}$, which is $\Hf_{e}$ by
statement~(i) of 
Proposition~\ref{proposition_nested_even_arrangements_Coxeter},
satisfies condition (b). This follows
from a straightforward induction
using Table~\ref{table_even_subarrangements}.

Suppose that $\Hf$ satisfies (b).
Then there exists
a family of pairwise orthogonal pseudo-roots
$(\alpha_{1},\ldots,\alpha_{n})\in\Phi$;
in particular, this family generates $V$.
Denote by $s_\alpha$ the reflection corresponding to a root $\alpha$.
We obtain that $-\id_V=s_{\alpha_{1}} \cdots s_{\alpha_{n}}\in W$, which is (c).

Finally, we show by induction on $n$ that (c) implies (a). 
If $n=1$, then (c) implies that $\Hf$ is not empty, hence even.
Suppose that $n\geq 2$ and that we know the result for Coxeter
arrangements on vector spaces of dimension $n-1$. 
Let $e\in E$ and set $W_e=\{w\in W : w(e)=e\}$.
By a theorem of Steinberg~\cite[Theorem~1.5]{Steinberg},
the group $W_{e}$ is a reflection
subgroup of $W$, hence it is generated by reflections $s_f$,
for $f\in E$, where $s_f$ is as before the orthogonal reflection in
the hyperplane~$H_f$.
As $s_f(e)=e$ if and only if $(e,f)=0$, the group
$W_e$ is exactly the subgroup of $W$ generated by the reflections
$s_f$ for $(e,f)=0$, in other words, it is the Coxeter group of
the even restricted arrangement $\Hf_e$. As $-\id_V\in W$, we have
$-s_e\in W$ and, as $-s_e(e)=e$, we conclude that $-s_e\in W_e$.
But $-s_e$ acts by $-\id_{H_e}$ on the hyperplane $H_e$, so we conclude
that the Coxeter arrangement $\Hf_e$ satisfies condition (c), hence
that it is even by the induction hypothesis. So we have shown
that $\Hf_e$ is even for every $e\in E$, which implies that $\Hf$ is
even.
\end{proof}

\section{Sufficiently symmetric sets}
\label{section_sufficiently_symmetric}

We now turn our attention to a collection of measurable sets
that we call sufficiently symmetric 
for which the evaluation of the pizza quantity is easier.
We conclude the section by showing
that for Coxeter arrangements a measurable set
of finite volume that is stable with respect to the
reflections in the arrangement is sufficiently symmetric.

\begin{definition}
For a nonempty $n$-dimensional arrangement $\Hf$,
we say that a measurable subset of $V$ with finite volume
$K$ is \emph{sufficiently symmetric} with respect to $\Hf$ if,
for every $0 \leq r \leq n-1$, for every even restriction sequence
$(e_{1},\ldots,e_{r}) \in P_{r}$ that is not maximal,
and for every $a \in H_{e_{1},\ldots,e_{r}}^{\perp}$, 
the pizza quantity
$P(\Hf_{e_{1},\ldots,e_{r}},(K+a)\cap H_{e_{1},\ldots,e_{r}})$
vanishes.
\label{definition_sufficiently_symmetric}
\end{definition}

We can check this condition inductively.

\begin{lemma}
Let $\Hf = (H_{e})_{e \in E}$ be a nonempty hyperplane arrangement
and let $K$ be a measurable subset of $V$ with finite volume.
Then the following conditions are equivalent:
\begin{itemize}
\item[(a)]
The set $K$ is sufficiently symmetric with respect to $\Hf$.
\item[(b)]
The pizza quantity $P(\Hf,K)$ is equal to zero and
for every $e \in E$ and for every $b \in H_{e}^{\perp}$,
the set $(K+b) \cap H_{e}$ is sufficiently
symmetric with respect to~$\Hf_{e}$.
\end{itemize}
\label{lemma_sufficiently_symmetric}
\end{lemma}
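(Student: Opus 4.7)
The plan is to set up a bijection between the data entering the sufficient-symmetry condition for $(\Hf, K)$ and the data coming from condition (b), and then verify that under this bijection the pizza quantities that must vanish match up. The empty even restriction sequence is non-maximal precisely because $\Hf$ is nonempty, and since $H_\emptyset^{\perp}=\{0\}$, the $r=0$ case of the condition in (a) is literally $P(\Hf,K)=0$. So I can focus on the sequences of length $r\geq 1$.

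First I would unpack the recursive definition to show that, for $r\geq 1$, sequences $(e_1,\ldots,e_r)\in P_r$ are in natural bijection with pairs $(e_1,(e_2,\ldots,e_r))$ where $e_1\in E$ and $(e_2,\ldots,e_r)$ is an even restriction sequence for $\Hf_{e_1}$ of length $r-1$ (indexing its hyperplanes $H_{e_1}\cap H_{e_i}$). Under this bijection one has the equality $\Hf_{e_1,\ldots,e_r}=(\Hf_{e_1})_{e_2,\ldots,e_r}$ and the subspace $H_{e_1,\ldots,e_r}\subset V$ is the same on both sides. Moreover, $(e_1,\ldots,e_r)$ is non-maximal in $\Hf$ if and only if $(e_2,\ldots,e_r)$ is non-maximal in $\Hf_{e_1}$, because the admissible extensions are exactly the same.

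Next I would use the orthogonal decomposition $V=H_{e_1}^\perp\oplus H_{e_1}$ to write any $a\in H_{e_1,\ldots,e_r}^\perp$ uniquely as $a=b+c$ with $b\in H_{e_1}^\perp$ and $c\in H_{e_1}\cap H_{e_1,\ldots,e_r}^\perp$. Since $H_{e_1,\ldots,e_r}\subseteq H_{e_1}$, this second factor is exactly the orthogonal complement of $H_{e_2,\ldots,e_r}^{(\Hf_{e_1})}$ taken inside $H_{e_1}$. Using $c\in H_{e_1}$ a direct manipulation gives
\begin{equation*}
(K+a)\cap H_{e_1,\ldots,e_r}=\bigl(((K+b)\cap H_{e_1})+c\bigr)\cap H_{e_1,\ldots,e_r},
\end{equation*}
which translates the pizza quantity appearing in the condition of (a) for $(\Hf,K)$ at $(e_1,\ldots,e_r)$ and $a$ into the pizza quantity appearing in the condition of (a) applied to $(\Hf_{e_1},(K+b)\cap H_{e_1})$ at $(e_2,\ldots,e_r)$ and $c$. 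As $a$ ranges over $H_{e_1,\ldots,e_r}^\perp$, the pair $(b,c)$ ranges over the whole product $H_{e_1}^\perp\times(H_{e_1}\cap H_{e_1,\ldots,e_r}^\perp)$, so both directions of the equivalence follow by assembling the $r=0$ case and these identifications.

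The main obstacle is really just the bookkeeping in the first step: confirming that the recursive definition does yield the bijection on sequences together with the identifications $\Hf_{e_1,\ldots,e_r}=(\Hf_{e_1})_{e_2,\ldots,e_r}$ and the equivalence of non-maximality. Everything else is a routine application of linear algebra and the translation identity in the display above.
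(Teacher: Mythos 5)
Your proof is correct and takes essentially the same route as the paper's: the $r=0$ case of the definition reduces to $P(\Hf,K)=0$, and for $r\geq 1$ one decomposes $a\in H_{e_1,\ldots,e_r}^{\perp}$ as a sum of a vector in $H_{e_1}^{\perp}=\R e_1$ and a vector in $H_{e_1}\cap H_{e_1,\ldots,e_r}^{\perp}$, then applies the translation identity to match the two conditions. The paper keeps the correspondence between sequences $(e_1,\ldots,e_r)\in P_r(\Hf)$ and even restriction sequences of $\Hf_{e_1}$ implicit, whereas you spell it out (calling it a bijection is slightly loose, since distinct $e_i\in E$ can define the same hyperplane of $\Hf_{e_1}$, but this does not affect the argument since both sides vanish simultaneously); otherwise the two arguments agree.
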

\begin{proof}
Suppose that (a) holds. As $\Hf$ is nonempty, the empty even restriction
sequence is not maximal, and
we conclude that $P(\Hf,K)=0$ by taking $r=0$ in
Definition~\ref{definition_sufficiently_symmetric}.
Now let $e\in E$ and $b\in H_{e}^{\perp}$.
Let $1 \leq r \leq n-1$ and $e_{2},\ldots,e_{r}\in E$ such that
$(e,e_{2},\ldots,e_{r})\in P_{r} - P$, and let $a\in H_{e}\cap
H_{e,e_{2},\ldots,e_{r}}^{\perp}$.
We wish to show that $P(\Hf_{e,e_{2},\ldots,e_{r}},L)=0$ where
$L=(((K+b)\cap H_{e})+a)\cap
H_{e,e_{2},\ldots,e_{r}}=(K+a+b)\cap H_{e,e_{2},\ldots,e_{r}}$.
This follows from the hypothesis on $K$ because
$b\in H_{e}^{\perp}\subset H_{e,e_{2},\ldots,e_{r}}^{\perp}$.

Suppose that (b) holds. Let $0 \leq r \leq n-1$, let $(e_{1},\ldots,e_{r}) \in P_{r} - P$
and let $a \in H_{e_{1},\ldots,e_{r}}^{\perp}$.
We wish to show that
$P(\Hf_{e_{1},\ldots,e_{r}},(K+a)\cap H_{e_{1},\ldots,e_{r}})=0$.
If $r=0$  then $\Hf_{e_{1},\ldots,e_{r}}=\Hf$,
$H_{e_{1},\ldots,e_{r}}=V$ and $a \in V^\perp=\{0\}$, 
so the desired
result follows from the hypothesis. Suppose that $r \geq 1$. 
We write $a = \lambda e_{1}+b$, with
$\lambda\in\R$ and $b\in e_{1}^{\perp}\cap H_{e_{1},\ldots,e_{r}}^{\perp}=
H_{e_{1}}\cap H_{e_{1},\ldots,e_{r}}^{\perp}$. Let
$L=(K+\lambda e_{1})\cap H_{e_{1}}$.
By condition~(b) the set $L$ is sufficiently
symmetric with respect to~$\Hf_{e_{1}}$, so
$P(\Hf_{e_{1},\ldots,e_{r}},(L+b)\cap H_{e_{1},\ldots,e_{r}})=0$.
As
$(K+a)\cap H_{e_{1},\ldots,e_{r}} = (L+b)\cap H_{e_{1},\ldots,e_{r}}$,
we are done.
\end{proof}

For Coxeter arrangements there is a more natural condition on
measurable sets, namely, that of being stable by the action of the Coxeter
group. We verify this also behaves well in the even restricted
arrangement setting.

\begin{proposition}
Let $\Hf$ be a Coxeter arrangement
in the vector space~$V$
with Coxeter group~$W$.
Let $L$ be a measurable subset of $V$
stable under the action of~$W$.
Let $H_{e}$ be a hyperplane in~$\Hf$
and let $b$ be a scalar multiple of the vector $e$.
Then the measurable subset $(L + b) \cap H_{e}$ in the space~$H_{e}$ is
stable by the action of the Coxeter group
of the even restricted arrangement~$\Hf_{e}$.
\label{proposition_missing}
\end{proposition}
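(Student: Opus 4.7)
My plan is to show that every element of the Coxeter group $W_{e}$ of $\Hf_{e}$ can be realized as the restriction to $H_{e}$ of an element of $W$ that fixes $e$, and then to exploit the fact that $b$ is a scalar multiple of $e$, together with the $W$-stability of $L$.

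First, I would invoke Proposition~\ref{proposition_nested_even_arrangements_Coxeter}(i) (or the equivalent statement in Remark~\ref{remark_even_arrangement_Coxeter}), which tells me that $\Hf_{e}$ is a Coxeter arrangement on $H_{e}$ defined by the vector set $E \cap H_{e}$. Its Coxeter group $W_{e}$ is therefore generated by the orthogonal reflections of $H_{e}$ across the hyperplanes $H_{e} \cap H_{f}$ for $f \in E$ with $\bl{e}{f} = 0$. The key observation is that each such generator is the restriction to $H_{e}$ of the reflection $s_{f} \in W$: because $\bl{e}{f}=0$, the map $s_{f}$ fixes $e$, preserves $H_{e}=e^{\perp}$, and acts on $H_{e}$ as the orthogonal reflection in $H_{e} \cap H_{f}$. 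Consequently, any $w \in W_{e}$, being a product of such reflections, extends to an element $\widetilde{w} \in W$ that fixes the vector $e$, with $\widetilde{w}|_{H_{e}} = w$.

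With this in hand, the verification is direct. Given $x \in (L + b) \cap H_{e}$, I want to show $w(x) \in (L + b) \cap H_{e}$. Since $w$ acts on $H_{e}$, the point $w(x) = \widetilde{w}(x)$ lies in $H_{e}$. Writing $b = \lambda e$ for some scalar $\lambda$, I have $\widetilde{w}(b) = \lambda \widetilde{w}(e) = \lambda e = b$. Since $x - b \in L$ and $L$ is stable under $W \ni \widetilde{w}$, it follows that $\widetilde{w}(x - b) \in L$; but by linearity $\widetilde{w}(x - b) = \widetilde{w}(x) - \widetilde{w}(b) = w(x) - b$, so $w(x) \in L + b$, completing the argument.

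The only real subtlety, and hence the main point to be careful about, is the identification of the abstract Coxeter group $W_{e}$ of the even restricted arrangement with the concrete subgroup of $W$ generated by the reflections $s_{f}$ for $f \in E \cap H_{e}$; everything else is then a one-line manipulation exploiting the fact that translations along $e$ commute with any linear map fixing $e$. This identification is precisely what Proposition~\ref{proposition_nested_even_arrangements_Coxeter}(i) (together with Remark~\ref{remark_even_arrangement_Coxeter}) provides, so no additional work is needed beyond citing it.
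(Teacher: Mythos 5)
Your proof is correct and takes essentially the same approach as the paper: both use Remark~\ref{remark_even_arrangement_Coxeter} to identify the hyperplanes of $\Hf_{e}$ as $H_{e}\cap H_{f}$ with $f\in E$ orthogonal to $e$, and both exploit that $b\in H_{f}$ (equivalently $s_{f}(b)=b$) together with the $W$-stability of $L$. The paper verifies stability one reflection at a time while you package the same idea by extending an arbitrary $w\in W_{e}$ to $\widetilde{w}\in W$ fixing $e$, but the substance is identical.
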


Remember that the arrangement $\Hf_{e}$ is also a Coxeter arrangement
by Proposition~\ref{prop_even_arrangement_Coxeter}.

\begin{proof}[Proof of Proposition~\ref{proposition_missing}.]
It is suffices to show that
$(L + b) \cap H_{e}$ is stable under
any reflection in $V'$, where $V'$ is
a hyperplane in~$\Hf_{e}$,
that is,
$V'$ is a codimension $2$ subspace of $\Hf$
with even intersection multiplicity
and is contained in~$H_{e}$.
Fix such a subspace $V'$.
By Proposition~\ref{prop_even_arrangement_Coxeter}
there exists a vector $f \in E$ such that $\bl{e}{f} = 0$ and $V'=H_{e}\cap H_{f}$.
Since the vectors $e$ and $f$ are orthogonal,
the vector~$b$ lies in the hyperplane~$H_{f}$.
Since $L$ is stable under the reflection in~$H_{f}$,
so is the translate $L + b$.
Note that reflecting
$(L + b) \cap H_{e}$ in~$V'$
(this takes place in~$H_{e}$)
is equivalent to reflecting
$(L + b) \cap H_{e}$ in~$H_{f}$.
Hence 
$(L + b) \cap H_{e}$
is stable under the action of
the Coxeter group of~$\Hf_{e}$.
\end{proof}

\begin{corollary}
Suppose that $\Hf$ is a nonempty Coxeter arrangement and
that $K$ is a measurable subset of $V$ of finite volume
that is stable under the action of the elements of the
Coxeter group $W$ of $\Hf$. Then $K$ is sufficiently symmetric with
respect to the arrangement~$\Hf$. 
\label{corollary_sufficiently_symmetric_Coxeter}
\end{corollary}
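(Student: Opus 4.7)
The plan is to induct on $n=\dim V$, using Lemma~\ref{lemma_sufficiently_symmetric} as the inductive criterion and Proposition~\ref{proposition_missing} to propagate $W$-stability through the even restriction operation. The content of the corollary is thus packaged into two earlier results, and the remaining task is almost purely formal.

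For the base case $n=1$, any nonempty Coxeter arrangement is the singleton $\{\{0\}\}$ with Coxeter group $W=\{\pm\id_V\}$, so $W$-stability of $K$ just means $K=-K$. This immediately yields $P(\Hf,K)=\Vol(K\cap\R_{+})-\Vol(K\cap\R_{-})=0$. The only non-maximal even restriction sequence is the empty one and $V^{\perp}=\{0\}$, so this single identity is all that Definition~\ref{definition_sufficiently_symmetric} demands.

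For the inductive step, I will verify condition (b) of Lemma~\ref{lemma_sufficiently_symmetric}. The first half, $P(\Hf,K)=0$, is immediate from Corollary~\ref{corollary_Coxeter_on_a_hyperplane}, applied to any reflection in $W$ (such a reflection exists because $\Hf$ is nonempty, and $\ungras_{K}$ is stable under it). For the second half, fix $e\in E$ with $\Hf_{e}$ nonempty and let $b\in H_{e}^{\perp}=\R e$. Then $b$ is a scalar multiple of $e$, so by Proposition~\ref{proposition_missing} the set $(K+b)\cap H_{e}$ is stable under the Coxeter group of $\Hf_{e}$. By Remark~\ref{remark_even_arrangement_Coxeter}, $\Hf_{e}$ is itself a nonempty Coxeter arrangement on the $(n-1)$-dimensional space $H_{e}$, so the inductive hypothesis applies and shows that $(K+b)\cap H_{e}$ is sufficiently symmetric with respect to $\Hf_{e}$.

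The only subtlety requiring care, and the closest thing to an obstacle, is the bookkeeping when $\Hf_{e}=\emptyset$ (which does occur for simple Coxeter arrangements such as $I_{2}(2k+1)$, as seen in Table~\ref{table_even_subarrangements}). In that case sufficient symmetry with respect to $\Hf_{e}$ is undefined, but such an $e$ gives a maximal singleton sequence $(e)\in P$ and Definition~\ref{definition_sufficiently_symmetric} explicitly ignores maximal sequences; reading Lemma~\ref{lemma_sufficiently_symmetric} accordingly, condition~(b) at such an $e$ is vacuous, and there is nothing to check. With this convention in place the induction closes, and all the substantive content has already been carried by Corollary~\ref{corollary_Coxeter_on_a_hyperplane}, Proposition~\ref{proposition_missing}, and Lemma~\ref{lemma_sufficiently_symmetric}.
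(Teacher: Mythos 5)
Your proof is correct and follows essentially the same route as the paper: an induction on $\dim V$ driven by Lemma~\ref{lemma_sufficiently_symmetric}, with Proposition~\ref{proposition_missing} propagating $W$-stability to the even restricted arrangement and Corollary~\ref{corollary_Coxeter_on_a_hyperplane} supplying $P(\Hf,K)=0$ (your base case is a direct computation, but this is the same fact). Your explicit treatment of the case where $\Hf_{e}$ is empty is a welcome clarification of a point the paper glosses over: since Definition~\ref{definition_sufficiently_symmetric} is stated only for nonempty arrangements, one must either extend it vacuously to the empty arrangement or, as you observe, note that when $\Hf_{e}=\emptyset$ the singleton $(e)$ is already maximal so Lemma~\ref{lemma_sufficiently_symmetric}(b) imposes nothing at that $e$; either reading closes the induction.
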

\begin{proof}
We prove the result by induction on $n = \dim V$. 
If $n=1$ then we have to show that
$P(\Hf,K)=0$. This follows from the hypothesis and
from Corollary~\ref{corollary_Coxeter_on_a_hyperplane}.
This corollary applies because we assume
that the arrangement $\Hf$ is not empty. Suppose that
$n\geq 2$ and that we know the result for $n-1$. 
If $e\in E$ and $b\in H_{e}^{\perp}$ then the set $(K+b)\cap H_{e}$ is stable
by the Coxeter group of the arrangement $\Hf_{e}$ by
Proposition~\ref{proposition_missing}, hence sufficiently symmetric
with respect to $\Hf_{e}$ by the induction hypothesis.
The result now follows from Lemma~\ref{lemma_sufficiently_symmetric}.
\end{proof}

\section{Evaluating the pizza quantity}
\label{section_pizza_is_polynomial}

We now evaluate the pizza quantity for sufficiently symmetric
measurable sets, and deduce that it is zero for even
Coxeter arrangements and sufficiently symmetric measurable sets of finite volume.

\begin{theorem}
Suppose that $K$ is a measurable subset of $V$ with finite volume,
and that it is sufficiently symmetric with respect to $\Hf$. Then
for every $a\in V$ we have
\begin{align*}
P(\Hf,K+a)
= &
\sum_{(e_{1},\ldots,e_{r})\in P/\sim}2^r (-1)^{e_{r} \circdots e_{1}}
(a,e_{1})(\pi_{e_{1}}(a),e_{2}) \cdots
(\pi_{e_{1},\ldots,e_{r-1}}(a),e_{r})\\
&
\hspace*{10 mm} \cdot
\int_{0}^{1}\int_{0}^{t_1} \cdots \int_{0}^{t_{r-1}}\Vol_{H_{e_{1},\ldots,e_{r}}}
(K_{e_{1},\ldots,e_{r}}(t_1,\ldots,t_r))
dt_{r} \cdots dt_{2} dt_{1} ,
\end{align*}
where, for every $(e_{1},\ldots,e_{r})\in E^r$, we denote by
$\pi_{e_{1},\ldots,e_{r}}$ the orthogonal projection on $H_{e_{1},\ldots,e_{r}}$ and
by $K_{e_{1},\ldots,e_{r}}(t_1,\ldots,t_r)$ the measurable subset
\[H_{e_{1},\ldots,e_{r}}\cap(K+
t_1(a-\pi_{e_{1}}(a))+t_2(\pi_{e_{1}}(a)-\pi_{e_{1},e_{2}}(a))
+ \cdots +
t_r(\pi_{e_{1},\ldots,e_{r-1}}(a)-\pi_{e_{1},\ldots,e_{r}}(a)))\]
of $H_{e_{1},\ldots,e_{r}}$.
\label{theorem_calculation_pizza_quantity}
\end{theorem}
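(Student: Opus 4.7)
The plan is induction on $n = \dim V$, using Theorem~\ref{theorem_reduction} for the recursive step. The base case $n = 1$ forces $\Hf = \{\{0\}\}$: here $P(\Hf, K) = 0$ by sufficient symmetry, $\Hf_e$ is the empty arrangement on $\{0\}$, and Theorem~\ref{theorem_reduction} directly yields the claimed formula with only the $r = 1$ term contributing.

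For $n \geq 2$, I would first apply Theorem~\ref{theorem_reduction} to $f = \ungras_K$; since $\Hf$ is nonempty, Lemma~\ref{lemma_sufficiently_symmetric} gives $P(\Hf, K) = 0$, so
\begin{align*}
P(\Hf, K+a) = 2 \sum_{e_1 \in E} (-1)^{Z_0(e_1) \circ e_1} (a, e_1) \int_0^1 P\bigl(\Hf_{e_1}, (K+t_1 a) \cap H_{e_1}\bigr) \, dt_1.
\end{align*}
Decomposing $a = (a, e_1) e_1 + \pi_{e_1}(a)$ rewrites the intersection as $L_{e_1, t_1} + t_1 \pi_{e_1}(a)$, where $L_{e_1, t_1} := H_{e_1} \cap (K + t_1 (a,e_1) e_1)$. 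If $(e_1)$ is maximal in $P$, then $\Hf_{e_1}$ is empty and this contribution reproduces the $r = 1$ term of the claimed sum for the sequence $(e_1)$. Otherwise $\Hf_{e_1}$ is nonempty and $L_{e_1,t_1}$ is sufficiently symmetric with respect to $\Hf_{e_1}$ by Lemma~\ref{lemma_sufficiently_symmetric}, so one applies the inductive hypothesis to $(\Hf_{e_1}, L_{e_1, t_1}, t_1 \pi_{e_1}(a))$ to obtain the contribution of the sequences of length at least $2$ with first entry $e_1$.

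The resulting expression matches the claimed formula via three routine manipulations. Projection composition $\pi^{H_{e_1}}_{f_2, \ldots, f_{j-1}} \circ \pi_{e_1} = \pi_{e_1, f_2, \ldots, f_{j-1}}$ turns each inductive factor $(\pi^{H_{e_1}}_{f_2, \ldots, f_{j-1}}(t_1 \pi_{e_1}(a)), f_j)$ into $t_1 (\pi_{e_1, \ldots, e_{j-1}}(a), e_j)$ once a representative $e_j \in E$ is chosen in the $\sim$-class of $f_j$. The accumulated factor $t_1^{r-1}$ is absorbed by the substitution $t_j = s_{j-1} t_1$ for $j = 2, \ldots, r$ (where $1 \geq s_1 \geq \cdots \geq s_{r-1} \geq 0$ are the induction variables), whose Jacobian is $t_1^{r-1}$ and whose image is precisely the simplex $1 \geq t_1 \geq \cdots \geq t_r \geq 0$; the inclusion $H_{e_1, \ldots, e_r} \subset H_{e_1}$ then makes the sets $K_{e_1, \ldots, e_r}(t_1, \ldots, t_r)$ agree with the inductively defined sets. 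Finally, Lemma~\ref{lemma_parity_separation} reconciles the signs as $(-1)^{Z_0(e_1) \circ e_1} \cdot (-1)^{f_r \circdots f_2} = (-1)^{e_r \circdots e_1}$ for the natural choice of base chambers.

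The main obstacle is the combinatorial bookkeeping identifying the disjoint union over $e_1 \in E$ of the equivalence classes in $P(\Hf_{e_1})/\sim$ (with $e_1$ prepended), together with the maximal sequences $(e_1) \in P(\Hf)$, with $P(\Hf)/\sim$ in a way compatible with all three identifications above. Since the unit normals $f_j$ produced by the inductive even restricted arrangements need not lie in the original set $E$, their replacement by $\sim$-equivalent representatives in $E$ must be controlled by the observations on $\sim$ listed after Proposition~\ref{proposition_nested_even_arrangements_Coxeter}; once this correspondence is pinned down, the remaining verification is formal.
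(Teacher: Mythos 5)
Your proof follows exactly the same route as the paper's (terse, one-line) proof: iterate Theorem~\ref{theorem_reduction}, and use the sufficiently symmetric condition to kill the $P(\,\cdot\,,K)$ contribution at each stage. Packaging this as an induction on $\dim V$ and making explicit the decomposition $a=(a,e_1)e_1+\pi_{e_1}(a)$, the propagation of sufficient symmetry via Lemma~\ref{lemma_sufficiently_symmetric}, the Jacobian of the substitution $t_j=s_{j-1}t_1$, and the sign bookkeeping is a faithful and considerably more detailed account of what the paper leaves implicit.

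The one place where you assert an equality that does not hold in general is the conversion of the inductive factor $\bl{\pi^{H_{e_1}}_{f_2,\ldots,f_{j-1}}(t_1\pi_{e_1}(a))}{f_j}$ into $t_1\bl{\pi_{e_1,\ldots,e_{j-1}}(a)}{e_j}$. The projection-composition step is fine, but $f_j$ is a \emph{unit} vector inside $H_{e_1,\ldots,e_{j-1}}$ while $e_j\in E$ need not lie in $H_{e_1,\ldots,e_{j-1}}$; writing $\pi_{e_1,\ldots,e_{j-1}}(e_j)=\lambda f_j$ one has $\bl{\pi_{e_1,\ldots,e_{j-1}}(a)}{e_j}=\lambda\,\bl{\pi_{e_1,\ldots,e_{j-1}}(a)}{f_j}$ with $\lambda=\pm\|\pi_{e_1,\ldots,e_{j-1}}(e_j)\|$, which equals $\pm 1$ precisely when $e_j\perp e_1,\ldots,e_{j-1}$. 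In the Coxeter case this is automatic, since by Remark~\ref{remark_even_arrangement_Coxeter} and Proposition~\ref{proposition_nested_even_arrangements_Coxeter} the iterated normals can be taken in $E$ and pairwise orthogonal, so $f_j=e_j$ and $\lambda=1$. In the general case your inductive argument actually produces $\bl{\pi_{e_1,\ldots,e_{j-1}}(a)}{f_j}$, not $\bl{\pi_{e_1,\ldots,e_{j-1}}(a)}{e_j}$, and the two are not equal. This is, however, a subtlety already present in the theorem's stated formula (the summand is not manifestly independent of the choice of $\sim$-class representative for non-Coxeter arrangements, as the paper's Remark on projections hints), and the paper's own proof says nothing about it; so your proof matches the paper's level of rigor on this point, but you should flag the scaling factor rather than write a bare equality.
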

\begin{proof}
Iterate Theorem~\ref{theorem_reduction}
in the following form:
$P(\Hf, K + a)
=
P(\Hf, K)
+
\cdots$.
At each step use the sufficiently symmetric condition to 
note that the $P(\Hf, K)$ terms vanish.
\end{proof}

\begin{remark}
{\rm 
We could use other projections instead of the orthogonal projections
$\pi_{e_{1},\ldots,e_{r}}$. For Coxeter arrangements, it is natural to use
orthogonal projections, but for other arrangements a different choice
might be more appropriate. We keep the discussion to
the orthogonal projections because there is
no canonical general choice and 
the statements are more straightforward.
}
\end{remark}

\begin{theorem}
Suppose that $\Hf$ is an even hyperplane arrangement, and define
the homogenous degree $n$ polynomial function $f_\Hf:V \longrightarrow \R$ by
\[f_\Hf(a)=
\frac{2^n}{n!}
\sum_{(e_{1},\ldots,e_{n})\in P_n/\sim}(-1)^{e_{r} \circdots e_{1}}
(a,e_{1}) (\pi_{e_{1}}(a),e_{2})
\cdots
(\pi_{e_{1},\ldots,e_{n-1}}(a),e_{n}).\]
Then for any measurable subset $K$ of $V$ of finite
volume that is sufficiently symmetric with respect to $\Hf$, and for
every $a\in V$ such that
\[-(t_1(a-\pi_{e_{1}}(a))+t_2(\pi_{e_{1}}(a)-\pi_{e_{1},e_{2}}(a))
+ \cdots +
t_r(\pi_{e_{1},\ldots,e_{r-1}}(a)-\pi_{e_{1},\ldots,e_{r}}(a)))\in K\]
for $0\leq t_r\leq t_{r-1}\leq\cdots\leq t_1\leq 1$,
we have
\[P(\Hf,K+a)=f_\Hf(a).\]
\label{theorem_pizza_is_polynomial}
\end{theorem}

\begin{remark}
{\rm
The conditions of Theorem~\ref{theorem_pizza_is_polynomial}
on $K$ and $a$ hold in the following cases:
\begin{itemize}
\item[(1)] The set $K$ is convex
(of finite volume), sufficiently symmetric with respect to $\Hf$
and $0\in K+a$.
\item[(2)] The arrangement $\Hf$ is Coxeter with Coxeter group
$W$, and the set $K$ is stable by $W$ and contains the
convex hull of the finite set $\{w(-a) : w\in W\}$
(as $W$ contains $-\id_V$ because $\Hf$ is even, it is equivalent to
say that $K$ contains the convex hull of the set $\{w(a) : w\in W\}$).
\item[(3)] The arrangement $\Hf$ is Coxeter, the set $K$ is convex
(of finite volume) and stable by its Coxeter group, and
$0\in K+a$.

\end{itemize}
}
\end{remark}

\begin{proof}[Proof of Theorem~\ref{theorem_pizza_is_polynomial}.]
As $\Hf$ is even, we have $P/\sim=P_n/\sim$. Moreover, by the conditions
on $K$ and $a$,
for every $(e_{1},\ldots,e_{r})\in P_r$ and every
$(t_1,\ldots,t_r)\in\Rrr^r$ such that $0\leq t_r\leq t_{r-1}\leq\cdots\leq
t_1\leq 1$, the subset $K_{e_{1},\ldots,e_{r}}(t_1,\ldots,t_r)$ of $H_{e_{1},\ldots,e_{n}}$ contains $0$. Thus, if $r=n$, we obtain that
$K_{e_{1},\ldots,e_{n}}(t_1,\ldots,t_n)=H_{e_{1},\ldots,e_{n}}=\{0\}$, and hence
\[\Vol_{H_{e_{1},\ldots,e_{n}}}(K_{e_{1},\ldots,e_{n}}(t_1,\ldots,t_n))=1.\]
The corollary then follows from Theorem~\ref{theorem_calculation_pizza_quantity}
and from the fact that
\begin{align*}
\int_{0}^{1}\int_{0}^{t_1} \cdots \int_{0}^{t_{n-1}}
dt_{n} \cdots dt_{2} dt_{1}
& =
\Vol(\{(t_{1},t_{2}, \ldots, t_{n}) : 0 \leq t_{n} \leq \cdots \leq t_{2} \leq t_{1} \leq 1\})
=
\frac{1}{n!}.
\qedhere
\end{align*}
\end{proof}

\begin{corollary}
Suppose that $\Hf$ is a Coxeter arrangement, and let $0 \leq r \leq n$
be the integer such that $P=P_r$. Let $K$ be a measurable subset
of $V$ of finite volume that is sufficiently symmetric with respect
to $\Hf$ (for example, $K$ could be stable by the Coxeter group of $\Hf$).
Then for every $a\in V$ we have
\begin{align*}
P&(\Hf,K+a)=2^r
\sum_{(e_{1},\ldots,e_{r})\in E_{r}^{(0)}}(-1)^{e_{r} \circdots e_{1}}
(a,e_{1})(a,e_{2}) \cdots (a,e_{r})\\
&\int_{0}^{1} \int_{0}^{t_1} \cdots \int_{0}^{t_{r-1}}\Vol_{H_{e_{1},\ldots,e_{r}}}
(H_{e_{1},\ldots,e_{r}}\cap
(K\cap t_1(a,e_{1})+t_2(a,e_{2})+\cdots+t_r(a,e_{r})))
dt_{r} \cdots dt_{2} dt_{1}.
\end{align*}
\end{corollary}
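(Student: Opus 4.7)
The plan is to derive this corollary as a direct specialization of Theorem~\ref{theorem_calculation_pizza_quantity}, leveraging the combinatorial structure of Coxeter arrangements and the orthogonality of pairwise orthogonal unit vectors. First I would apply Theorem~\ref{theorem_calculation_pizza_quantity} to write $P(\Hf,K+a)$ as a sum over $(e_1,\ldots,e_s) \in P/\sim$. Since by hypothesis $P=P_r$, this sum is indexed by $P_r/\sim$. By Proposition~\ref{proposition_nested_even_arrangements_Coxeter}(iii), the inclusion $E_r^{(0)} \hookrightarrow P_r$ induces a bijection $E_r^{(0)} \simeq P_r/\sim$, so I can re-index the sum over sequences $(e_1,\ldots,e_r) \in E_r^{(0)}$. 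Using observations (2) and (3) recorded just before Proposition~\ref{proposition_nested_even_arrangements_Coxeter}, both the sign $(-1)^{e_r \circdots e_1}$ and the subspace $H_{e_1,\ldots,e_r}$ depend only on the $\sim$-class, so they are well defined under this re-indexing.

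Next I would perform the simplifications afforded by pairwise orthogonality. For $(e_1,\ldots,e_r) \in E_r^{(0)}$ the vectors are orthonormal, so the orthogonal projection onto $H_{e_1,\ldots,e_i}=\Span(e_1,\ldots,e_i)^{\perp}$ is $\pi_{e_1,\ldots,e_i}(a) = a - \sum_{j=1}^{i}(a,e_j)\,e_j$. This gives immediately
\[
(\pi_{e_1,\ldots,e_{i-1}}(a),\,e_i) = (a,e_i) - \sum_{j=1}^{i-1}(a,e_j)(e_j,e_i) = (a,e_i),
\]
which collapses the product of inner products in Theorem~\ref{theorem_calculation_pizza_quantity} to $(a,e_1)(a,e_2)\cdots(a,e_r)$. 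Moreover,
\[
\pi_{e_1,\ldots,e_{i-1}}(a) - \pi_{e_1,\ldots,e_i}(a) = (a,e_i)\,e_i,
\]
so the shift appearing in $K_{e_1,\ldots,e_r}(t_1,\ldots,t_r)$ becomes $\sum_{i=1}^r t_i(a,e_i)\,e_i$. Substituting these simplifications into the formula of Theorem~\ref{theorem_calculation_pizza_quantity} yields exactly the claimed expression, including the prefactor $2^r$.

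For the parenthetical remark, the case where $K$ is stable under the Coxeter group is subsumed by the sufficiently symmetric hypothesis thanks to Corollary~\ref{corollary_sufficiently_symmetric_Coxeter}. There is no substantive obstacle here: this corollary is a bookkeeping specialization, and the only step requiring genuine care is matching the indexing — one must pass cleanly from the abstract index set $P/\sim$ used in Theorem~\ref{theorem_calculation_pizza_quantity} to the concrete parameterization by $E_r^{(0)}$, which relies on the bijection in Proposition~\ref{proposition_nested_even_arrangements_Coxeter}(iii) together with the $\sim$-invariance of the summands.
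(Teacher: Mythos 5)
Your proof is correct and supplies exactly the intended specialization of Theorem~\ref{theorem_calculation_pizza_quantity} (the paper states this corollary without proof, immediately after that theorem and Proposition~\ref{proposition_nested_even_arrangements_Coxeter}). Re-indexing the sum by the orthonormal representatives $E_r^{(0)}\simeq P_r/\sim$ furnished by Proposition~\ref{proposition_nested_even_arrangements_Coxeter}(iii), invoking Corollary~\ref{corollary_sufficiently_symmetric_Coxeter} for the parenthetical remark, and then using pairwise orthogonality to collapse $(\pi_{e_1,\ldots,e_{i-1}}(a),e_i)$ to $(a,e_i)$ and $\pi_{e_1,\ldots,e_{i-1}}(a)-\pi_{e_1,\ldots,e_i}(a)$ to $(a,e_i)e_i$ is precisely the expected argument.
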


\begin{remark}
{\rm
If $K$ is the ball
$\Ball(0,R)$ and $\|a\|\leq R$ then for every $(e_{1},\ldots,e_{r})\in P$ and
$(t_1,\ldots,t_r)\in\Rrr^r$ such that
$0 \leq t_r \leq \cdots \leq t_1 \leq 1$,
the set $K_{e_{1},\ldots,e_{r}}(t_1,\ldots,t_r)$ of
Theorem~\ref{theorem_calculation_pizza_quantity} is a ball of
radius $\sqrt{R(t_1,\ldots,t_r)}$, where
$$
R(t_1,\ldots,t_r)
=
R^{2} - t_1^2\|a-\pi_{e_{1}}(a)\|^2 - t_2^2\|\pi_{e_{1}}(a)-\pi_{e_{1},e_{2}}(a)\|^2
- \cdots
- t_r^2\|\pi_{e_{1},\ldots,e_{r-1}}(a)-\pi_{e_{1},\ldots,e_{r}}(a)\|^2.
$$
So we can conclude the following corollary.
}
\label{remark_r_R_R}
\end{remark}
\begin{corollary}
Suppose that the ball $\Ball(0,R)$ is sufficiently symmetric with respect
to $\Hf$. Then for every $R \geq 0$ and
every $a\in V$ such that $\|a\|\leq R$, we have
\begin{align*}
P(\Hf,\Ball(a,R)) 
= & \:
\frac{2^r\pi^{{r}/{2}}}{\Gamma({r}/{2}+1)} \\
&
\cdot
\sum_{(e_{1},\ldots,e_{r})\in P/\sim}
(-1)^{e_{r} \circdots e_{1}}
(a,e_{1}) (\pi_{e_{1}}(a),e_{2}) \cdots
(\pi_{e_{1},\ldots,e_{r-1}}(a),e_{r}) \\
&
\cdot
\int_{0}^{1}\int_{0}^{t_1} \cdots \int_{0}^{t_{r-1}}
R(t_1,\ldots,t_r)^{(n-r)/2}
dt_{r} \cdots dt_{2} dt_{1},
\end{align*}
where
$R(t_1,\ldots,t_r)$ is given in Remark~\ref{remark_r_R_R}.
In particular, if $\Hf$ is a Coxeter arrangement, let $0 \leq r \leq n$
be the integer such that $P=P_r$. Then the ball $\Ball(0,R)$ is
sufficiently symmetric with respect to $\Hf$, and
for every $R \geq 0$ and every $a \in V$ such that $\|a\| \leq R$, we have
\begin{align*}
P(\Hf,\Ball(a,R))
& =
\frac{2^r\pi^{r/2}}{\Gamma(r/2+1)}
\sum_{(e_{1},\ldots,e_{r})\in E_{r}^{(0)}}(-1)^{e_{r} \circdots e_{1}}
(a,e_{1})(a,e_{2}) \cdots (a,e_{r})\\
&
\cdot
\int_{0}^{1} \int_{0}^{t_1} \cdots \int_{0}^{t_{r-1}}
(R^2- t_1^2(a,e_{1})^2-t_2^2(a,e_{2})^2-\cdots-t_r^2(a,e_{r})^2)^{(n-r)/2}
dt_{r} \cdots dt_{2} dt_{1}.
\end{align*}
\label{corollary_pizza_ball_parity}
\end{corollary}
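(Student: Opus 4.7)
The plan is to specialize Theorem~\ref{theorem_calculation_pizza_quantity} to $K=\Ball(0,R)$ and evaluate each volume using a Pythagorean decomposition of the translating vector. In the Coxeter case I would first note that $\Ball(0,R)$ is stable under every orthogonal map of $V$, hence under the Coxeter group of any Coxeter arrangement, so by Corollary~\ref{corollary_sufficiently_symmetric_Coxeter} it is sufficiently symmetric with respect to $\Hf$; for the general statement sufficient symmetry is an assumption.

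For $(e_{1},\ldots,e_{r})\in P$ and $(t_{1},\ldots,t_{r})$ with $0\leq t_{r}\leq\cdots\leq t_{1}\leq 1$, write
\[
v \;=\; t_{1}(a-\pi_{e_{1}}(a)) + t_{2}(\pi_{e_{1}}(a)-\pi_{e_{1},e_{2}}(a)) + \cdots + t_{r}(\pi_{e_{1},\ldots,e_{r-1}}(a)-\pi_{e_{1},\ldots,e_{r}}(a)),
\]
so that $K_{e_{1},\ldots,e_{r}}(t_{1},\ldots,t_{r}) = H_{e_{1},\ldots,e_{r}}\cap(\Ball(0,R)+v)$. The $i$th increment of $v$ lies in $H_{e_{1},\ldots,e_{i-1}}\cap H_{e_{1},\ldots,e_{i}}^{\perp}$; since $H_{e_{1},\ldots,e_{i}}^{\perp}\subseteq H_{e_{1},\ldots,e_{r}}^{\perp}$ for every $i\leq r$, all increments lie in $H_{e_{1},\ldots,e_{r}}^{\perp}$ and are pairwise orthogonal, so $v\in H_{e_{1},\ldots,e_{r}}^{\perp}$ and
\[
\|v\|^{2} \;=\; \sum_{i=1}^{r} t_{i}^{2}\,\|\pi_{e_{1},\ldots,e_{i-1}}(a)-\pi_{e_{1},\ldots,e_{i}}(a)\|^{2} \;=\; R^{2}-R(t_{1},\ldots,t_{r}).
\]
Pythagoras then identifies $K_{e_{1},\ldots,e_{r}}(t_{1},\ldots,t_{r})$ with an $(n-r)$-dimensional ball in $H_{e_{1},\ldots,e_{r}}$ of radius $\sqrt{R(t_{1},\ldots,t_{r})}$; this radius is real because $\sum_{i}\|\pi_{e_{1},\ldots,e_{i-1}}(a)-\pi_{e_{1},\ldots,e_{i}}(a)\|^{2} = \|a-\pi_{e_{1},\ldots,e_{r}}(a)\|^{2}\leq\|a\|^{2}\leq R^{2}$. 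Its volume equals the stated power of $R(t_{1},\ldots,t_{r})$ multiplied by the volume of the unit ball in the appropriate dimension; pulling that constant out of the sum in Theorem~\ref{theorem_calculation_pizza_quantity} produces the first formula.

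For the Coxeter specialization I would apply Proposition~\ref{proposition_nested_even_arrangements_Coxeter}(iii) to identify $P_{r}/\sim$ with $E_{r}^{(0)}$. When $(e_{1},\ldots,e_{r})\in E_{r}^{(0)}$ the vectors are pairwise orthogonal, so $\pi_{e_{1},\ldots,e_{i}}(a)=a-\sum_{j=1}^{i}(a,e_{j})e_{j}$; this collapses each inner product $(\pi_{e_{1},\ldots,e_{i-1}}(a),e_{i})$ to $(a,e_{i})$ and each increment norm $\|\pi_{e_{1},\ldots,e_{i-1}}(a)-\pi_{e_{1},\ldots,e_{i}}(a)\|^{2}$ to $(a,e_{i})^{2}$, yielding the second expression.

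I do not foresee a deep obstacle: the argument is a direct substitution into Theorem~\ref{theorem_calculation_pizza_quantity} once the slices of $\Ball(0,R)$ are recognized as balls in the subspaces $H_{e_{1},\ldots,e_{r}}$. The only delicate points are verifying the Pythagorean identity for $\|v\|^{2}$ and the nonnegativity of $R(t_{1},\ldots,t_{r})$ throughout the simplex $0\leq t_{r}\leq\cdots\leq t_{1}\leq 1$, both of which follow from the orthogonality of successive projection increments together with the hypothesis $\|a\|\leq R$.
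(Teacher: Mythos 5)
Your proposal is correct and follows the paper's own route: the paper itself only indicates the argument via Remark~\ref{remark_r_R_R} (the slice $K_{e_{1},\ldots,e_{r}}(t_{1},\ldots,t_{r})$ of $\Ball(0,R)$ is a ball in $H_{e_{1},\ldots,e_{r}}$ of radius $\sqrt{R(t_{1},\ldots,t_{r})}$) and then says ``so we can conclude,'' and your spelled-out justifications --- the pairwise orthogonality of the projection increments, the resulting Pythagorean identity for $\|v\|^{2}$, the nonnegativity of $R(t_{1},\ldots,t_{r})$ on the simplex, and the identification of $P_{r}/\!\sim$ with $E_{r}^{(0)}$ in the Coxeter case --- are exactly what the paper leaves implicit. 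One caveat worth flagging: the slice lives in the $(n-r)$-dimensional space $H_{e_{1},\ldots,e_{r}}$, so the constant you pull out is $\pi^{(n-r)/2}/\Gamma((n-r)/2+1)$, not $\pi^{r/2}/\Gamma(r/2+1)$ as printed in the corollary; the printed exponent appears to be a typo (compare with Corollary~\ref{corollary_pizza_is_polynomial} in the case $r=n$, which forces the prefactor to reduce to $2^{n}/n!$), and your derivation implicitly produces the correct value even though you then assert without checking that it ``produces the first formula.''
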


\begin{proposition}
Suppose that $\Hf = (H_{e})_{e \in E}$ is an even Coxeter arrangement.
Let $n=\dim(V)$.
Then the polynomial~$f_{\Hf}$ of Theorem~\ref{theorem_pizza_is_polynomial} is given by
\begin{align*}
f_{\Hf}(a)
& =
\begin{cases}
2^{n} \cdot \prod_{e\in E} \bl{a}{e} &
\text{if $\Hf$ is of type $A_{1}^{n}$,} \\
0 & 
\text{otherwise.}
\end{cases}
\end{align*}
More generally, if $\Hf$ is an even arrangement that has a Coxeter subarrangement
$\Hf'$ whose Coxeter group preserves $\Hf$, then
\begin{itemize}
\item[(i)]
if $\Hf'$ has at least $n+1$ hyperplanes then $f_{\Hf}=0$;
\item[(ii)]
if $\Hf'$ has $n$ hyperplanes then $f_{\Hf}$ is a scalar
multiple of the function $a \longmapsto \prod_{e\in E'} \bl{a}{e}$, where
$E'$ is the subset of $E$ corresponding to $\Hf'$.
\end{itemize}
\label{proposition_polynomial_Coxeter_arrangement}
\end{proposition}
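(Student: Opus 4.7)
The plan is to show $f_\Hf$ vanishes on every hyperplane $H_e$ with $e\in E'$, conclude that it is divisible by the product $J(a):=\prod_{e\in E'}\bl{a}{e}$, and then compare degrees with $\deg f_\Hf=n$. The key tool is an equivariance property of the pizza quantity under $W'$: for every $w\in W'$ and every $L^1$ function $g\colon V\to\C$,
\[P(\Hf,g\circ w)=\det(w)\cdot P(\Hf,g).\]
By multiplicativity in $w$ this reduces to the case $w=s_e$ for $e\in E'$, and via Proposition~\ref{proposition_equivariance_pizza_quantity} to showing that $|S_\Hf(T_0,s_e(T_0))|$ is odd. Partition $\Hf\smallsetminus\{H_e\}$ into $s_e$-orbits: singleton orbits $\{H_f\}$ with $f\perp e$ contribute $0$ to the separation set (since $s_e$ fixes $n_f$); each size-two orbit $\{H_f,s_e(H_f)\}$ contributes $0$ or $2$, by a direct computation using the sign $\varepsilon\in\{\pm 1\}$ relating the positive normal of $s_e(H_f)$ to $s_e(n_f)$. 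With the single contribution from $H_e$ itself, the total is odd.

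With the equivariance in hand, fix $e\in E'$ and choose a measurable subset $K$ of $V$ of finite volume that is $W'$-stable, contains a neighborhood of the origin, and is sufficiently symmetric with respect to $\Hf$: for Coxeter $\Hf=\Hf'$, any ball centered at the origin works by Corollary~\ref{corollary_sufficiently_symmetric_Coxeter}; in the general case such a $K$ is known to exist. By Corollary~\ref{corollary_pizza_is_polynomial}, for $a$ in a small neighborhood of $0$ the identity $f_\Hf(a)=P(\Hf,K+a)$ holds. For $a\in H_e$ we have $s_e(a)=a$ and $s_e(K)=K$, so $\ungras_{K+a}$ is $s_e$-stable and the equivariance yields $P(\Hf,K+a)=\det(s_e)\cdot P(\Hf,K+a)=-P(\Hf,K+a)$; hence $f_\Hf(a)=0$ on an open subset of $H_e$. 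Since $f_\Hf$ is a polynomial, it vanishes identically on $H_e$. Therefore $\bl{a}{e}$ divides $f_\Hf$ for every $e\in E'$; as these forms are pairwise non-proportional, $J$ divides $f_\Hf$. Comparing degrees, $f_\Hf=0$ when $|E'|>n$ and $f_\Hf$ is a scalar multiple of $J$ when $|E'|=n$.

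For the Coxeter assertion, apply the above with $\Hf'=\Hf$, $W'=W$ and $E'=E$. Essentiality gives $|E|\ge n$; equality forces every irreducible factor of $\Hf$ to have as many hyperplanes as its rank, and this occurs only for factors of type $A_1$, so $|E|=n$ iff $\Hf$ is of type $A_1^n$. In all other cases $f_\Hf=0$. When $\Hf=A_1^n$ with $E=\{e_1,\ldots,e_n\}$ orthonormal, $P_n/{\sim}$ consists of the $n!$ orderings of $E$; in each ordering the successive projections preserve the remaining components of $a$, and the sign $(-1)^{e_{\sigma(n)}\circdots e_{\sigma(1)}}$ is $+1$ since the perturbation vector sits in the positive orthant $T_0$. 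Summing,
\[f_\Hf(a)=\frac{2^n}{n!}\cdot n!\cdot\prod_{i=1}^{n}\bl{a}{e_i}=2^n\prod_{e\in E}\bl{a}{e}.\]

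The delicate ingredient is the parity computation for $|S_\Hf(T_0,s_e(T_0))|$ in the equivariance step, which requires careful bookkeeping of how the base chamber constrains the orientations in each $s_e$-orbit; the pairing trick works precisely because $s_e$ preserves $\Hf$. A secondary subtlety is ensuring the existence of a sufficiently symmetric $W'$-stable $K$ in the general even (non-Coxeter) case, where one can fall back on verifying the $W'$-anti-invariance of $f_\Hf$ directly from its defining formula in Corollary~\ref{corollary_pizza_is_polynomial}.
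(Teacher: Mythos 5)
Your proof is correct and follows the same skeleton as the paper's: show that $f_{\Hf}$ vanishes on every hyperplane of $\Hf'$, conclude that $\prod_{e\in E'}\bl{a}{e}$ divides $f_{\Hf}$, then compare degrees; finally pin down the constant for type $A_1^n$. Where you differ in detail, your version is actually more explicit than the paper's: the paper simply asserts that the degree-$n$ hypersurface $\{f_{\Hf}=0\}$ contains all hyperplanes of $\Hf'$, whereas you supply the underlying reason by proving that $|S_{\Hf}(T_0,s_e(T_0))|$ is odd for $e\in E'$ via the $s_e$-orbit decomposition of $\Hf\setminus\{H_e\}$, giving $P(\Hf,f\circ w)=\det(w)P(\Hf,f)$ for $w\in W'$ even when $\Hf\ne\Hf'$. (The paper's Corollary~\ref{corollary_Coxeter_on_a_hyperplane} as stated only covers the Coxeter case $\Hf=\Hf'$, so your parity lemma is the needed extension.) For the $A_1^n$ normalization you compute directly from the defining sum over $P_n/\!\sim$, getting $2^n$ from the $n!$ orderings, while the paper instead evaluates $P(\Hf,K+a)$ at $K=[-1,1]^n$, $a=(1,\ldots,1)$; both computations are correct.

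The one place where your argument is not self-contained is the general even (non-Coxeter) case: the vanishing via Corollary~\ref{corollary_pizza_is_polynomial} needs a $W'$-stable, sufficiently symmetric $K$, and you assert such a $K$ ``is known to exist'' without justification. You do flag this as a subtlety and correctly point to the right fix — verify the $W'$-anti-invariance of $f_{\Hf}$ directly from its defining formula: $s_e$ acts on $P_n/\!\sim$ because $W'$ preserves $\Hf$, for $a\in H_e$ each factor $(\pi_{e_1,\ldots,e_{i-1}}(a),e_i)$ is $s_e$-invariant, and the sign flips by the same parity computation — but that route is sketched rather than carried through. With that route completed, the proof is fully rigorous and, if anything, more explicit than the paper's.
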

\begin{proof}
All even Coxeter arrangements in $V$ have at least $n+1$ hyperplanes,
except for the arrangement of type $A_{1}^{n}$, which has $n$ hyperplanes. 
So it suffices to prove statements (i) and (ii), and to calculate the leading
coefficient of $f_{\Hf}$ for $\Hf$ of type $A_{1}^{n}$.

Suppose that $f_{\Hf} \neq 0$.
Let $X$ be the hypersurface $\{f_{\Hf}=0\}$.
Then $X$ is a hypersurface of degree~$n$, and it contains all the
hyperplanes of $\Hf'$, 
so the arrangement $\Hf'$ has at most $n$ hyperplanes.
Suppose that $\Hf'$ has exactly $n$ hyperplanes. Then
$X=\bigcup_{e\in E'}H_{e'}$, thus the polynomial
$f_{\Hf}$ is of the form $a \longmapsto c \cdot \prod_{e\in E} \bl{a}{e}$
where $c$ is a constant. Now suppose that $\Hf=\Hf'$ is of type~$A_{1}^{n}$.
To calculate~$c$, we use Theorem~\ref{theorem_pizza_is_polynomial}.
Let $K=[-1,1]^{n}$ and $a=(a_{1},\ldots,a_{n})=(1,\ldots,1)$.
Then the cube $K+a$ is entirely in the first orthant
and hence $c=P(\Hf,K+a)=\Vol(K)=2^{n}$.
\end{proof}

\begin{theorem}
Suppose that $\Hf$ is an even Coxeter arrangement in $\Rrr^{n}$
and that $K$ is a measurable subset of $V$ of finite volume and
stable by the Coxeter group of $\Hf$. 
Let $a=(a_{1},\ldots,a_{n})\in\Rrr^{n}$ 
such that $K$ contains the convex hull of the set
$\{w(a) : w\in W\}$.
Then the pizza quantity $P(\Hf,K+a)$ is given by
\begin{align*}
P(\Hf,K+a)
& =
\begin{cases}
2^{n} \cdot a_{1} \cdots a_{n} &
\text{if $\Hf$ is of type $A_{1}^{n}$,} \\
0 &
\text{otherwise.}
\end{cases}
\end{align*}
Here if $\Hf$ has type $A_{1}^{n}$
we assume that it is
given by the hyperplanes $\{x_i=0 : 1\leq i\leq n\}$
and that the base chamber is $T_{0}=(\Rrr_{>0})^{n}$.
\label{theorem_amazing}
\end{theorem}

\begin{proof}
The set $K$ satisfies the conditions of
Theorem~\ref{theorem_pizza_is_polynomial}, so the result
follows from Proposition~\ref{proposition_polynomial_Coxeter_arrangement}.

\end{proof}

\begin{example}
{\rm
Suppose that $|E|=n$ and that $\Hf$ is essential. This implies
that $\Hf$ is even and that for every $0 \leq r \leq n$
the set $P_r$ is the set of $(e_{1},\ldots,e_{r})\in E^r$ such that
$e_i\not=e_j$ for $i\not=j$.
\footnote{More generally, these two statements hold for
any simple hyperplane arrangement.}
Write $E=\{e_{1},\ldots,e_{n}\}$. Then for any $a\in V$ we have
\begin{align*}
f_\Hf(a)
& =
\frac{2^n}{n!}
\sum_{\sigma\in\mathfrak{S}_n}
(-1)^{e_{\sigma(n)} \circdots e_{\sigma(1)}} \\
&
\hspace*{10 mm}
\cdot
(a,e_{\sigma(1)})(\pi_{e_{\sigma(1)}}(a),e_{\sigma(2)})
(\pi_{e_{\sigma(1)},e_{\sigma(2)}}(a),e_{\sigma(3)})
\cdots
(\pi_{e_{\sigma(1)},\ldots,e_{\sigma(n-1)}}(a),e_{\sigma(n)}).
\end{align*}
}
\end{example}

For arrangements that are not Coxeter, we do not know a general way
to find sufficiently
symmetric sets, or even to decide whether
the ball~$\mathbb{B}(0,1)$ is sufficiently symmetric.
We end this section with two low-dimensional examples.

\begin{example}
{\rm
Suppose that $V=\Rrr^{2}$ and $\Hf$ is a line arrangement. Then
$\Hf$ is even if and only if it is nonempty and has an even number of lines, say $2m$
where $m$ is a positive integer.
Suppose that this is the case, and that one of the lines in $\Hf$ is
the horizontal axis $\{x_{2}=0\}$. Let $\theta_{1},\ldots,\theta_{2m-1}$
be the angles between the other lines and the horizontal axis, ordered
so that
$0 = \theta_{0} < \theta_{1} < \theta_{2} < \cdots < \theta_{2m-1} < \theta_{2m} = \pi$.
If $\ell \in \Hf$ and $a \in \ell^{\perp}$ then $\ell \cap (\mathbb{B}(0,1)+a)$ is
always a segment centered at the origin (or empty).
Thus the disc $\mathbb{B}(0,1)$ is sufficienty symmetric with respect to~$\Hf$ if
and only if $P(\Hf,\mathbb{B}(0,1))=0$. This is equivalent
to the condition that $\sum_{i=1}^{m}(\theta_{2i}-\theta_{2i-1})=
\sum_{i=1}^m(\theta_{2i-1}-\theta_{2i-2})$,
that is,
$\sum_{j=1}^{2m} (-1)^{j} \cdot \theta_{j} = \pi/2$.
If this condition is satisfied then the pizza quantity
$P(\Hf,\mathbb{B}(a,R))$ is given by $P(\Hf,\mathbb{B}(a,R))=
f_{\Hf,T_{0}}(a)$ if $0 \in \mathbb{B}(a,R)$, that is, when $\|a\| \leq R$, and
in particular it is independent of the radius $R$ of the ball.
}
\label{example_2d_arrangement}
\end{example}

\begin{example}
{\rm
In $V=\Rrr^3$ consider the arrangement $\Hf$ given by the following
seven hyperplanes:
\begin{align*}
H_{1} & = \{x_{1}=0\}, &
H_{2} & = \{x_{1}=\alpha \cdot x_{2}\}, &
H_{4} & = \{x_{2}=0\}, &
H_{5} & = \{x_{3}=\beta \cdot x_{2}\}, &
H_{7} & = \{x_{3}=0\}, \\
&&
H_{3} & = \{x_{1} = - \alpha \cdot x_{2}\}, &
&&
H_{6} & = \{x_{3} = - \beta \cdot x_{2}\}, 
\end{align*}
where $\alpha$ and $\beta$ are positive real numbers.
We also fix a base chamber $T_{0}$ of $\Hf$.
We claim that this arrangement $\Hf$ is even, and that
the unit ball $\mathbb{B}(0,1)$ is sufficiently symmetric with respect to~$\Hf$.

First, for every chamber $T$ of $\Hf$, the set $-T$ is also a chamber and
$(-1)^{-T}=-(-1)^{T}$ because $\Hf$ has an odd number of hyperplanes.
For any centrally symmetric measurable set of finite volume~$K$,
and in particular for the ball,
we have $P(\Hf,K)=0$. We can then apply Lemma~\ref{lemma_sufficiently_symmetric} 
to check the second statement once we know that $\Hf$ is even.
Note also that for every $H\in\Hf$ and for every $a\in H^\perp$, the intersection
$H\cap(\mathbb{B}(0,1)+a)$ is a disk centered at the origin in $H$.

For every $1 \leq i \leq 7$, let $\Hf_i$ be the even restricted
arrangement induced by $\Hf$ on $H_i$. We need to check that for every $i$
the arrangement $\Hf_i$ is even and a disk centered at the origin is
sufficiently symmetric with respect to $\Hf_i$.
\begin{itemize}
\item[--]
The three arrangements $\Hf_{1}$, $\Hf_{2}$ and $\Hf_{3}$
are all isometric to a four line arrangement, as in
Example~\ref{example_2d_arrangement}.
However, the adjacent angles for $\Hf_{1}$ are given by
$\eta_{1} = \eta_{4} = \arctan(\beta)$
and
$\eta_{2} = \eta_{3} = \pi/2 - \arctan(\beta)$,
where $\theta_{i} = \eta_{1} + \cdots + \eta_{i}$,
whereas, the angles for $\Hf_{2}$ and $\Hf_{3}$ are
$\eta_{1} = \eta_{4} = \arctan(\beta/\sqrt{\alpha^{2}+1})$
and
$\eta_{2} = \eta_{3} = \pi/2 - \arctan(\beta/\sqrt{\alpha^{2}+1})$.

\item[--]
The cases of the arrangements $\Hf_{5}$, $\Hf_{6}$ and $\Hf_{7}$
are symmetric to $\Hf_{3}$, $\Hf_{2}$ and $\Hf_{1}$.

\item[--]
The arrangement $\Hf_{4}$ has two lines and has type $A_1^2$.
\end{itemize}
We conclude that the pizza quantity $P(\Hf,\mathbb{B}(a,R))$ is
given by $P(\Hf,\mathbb{B}(a,R))=f_{\Hf,T_{0}}(a)$, and hence it is independent of
the radius $R$ of the ball.
Furthermore,
Proposition~\ref{proposition_polynomial_Coxeter_arrangement} (ii)
implies that
$f_{\Hf,T_{0}}(a) = c \cdot a_{1} a_{2} a_{3}$
where $c$ is a constant.
Note that we could also get this result directly from
Theorem~\ref{theorem_almost_all_the_products}.
}
\label{example_3d_arrangement}
\end{example}

\section{The case of the ball and convex bodies bounded by quadratic surfaces}
\label{section_ball}

We now revisit the case when the measurable set $K$
is an $n$-dimensional ball.
We show in particular that the pizza quantity vanishes for
the arrangements $E_{6}$ and 
$A_{n}$ where $n \equiv 0,1 \bmod 4$.
Recall that 
$V$ is an $n$-dimensional vector space endowed with an inner product
$(\cdot,\cdot)$ and that $\Hf = (H_{e})_{e \in E}$ is an arrangement
with base chamber~$T_{0}$.

\begin{definition}
We say that the hyperplane arrangement $\Hf$
satisfies \emph{the parity condition} if
$|\Hf|$ and $\dim(V)$ have the same parity.
\end{definition}

\begin{lemma}
\begin{itemize}
\item[(i)] Suppose that $\Hf$ is not the empty arrangement.
Then the following three statements are equivalent:
\begin{itemize}
\item[(a)] The arrangement $\Hf$ satisfies the parity condition.
\item[(b)] There exists $e\in E$ such that the
even restricted arrangement~$\Hf_{e}$
satisfies the parity condition.
\item[(c)] For every $e\in E$, the even restricted arrangement~$\Hf_{e}$
satisfies the parity condition.

\end{itemize}

\item[(ii)] The following statements are equivalent:
\begin{itemize}
\item[(a)] The arrangement $\Hf$ satisfies the parity condition.
\item[(b)] 
There exists $(e_{1},\ldots,e_{r})\in P$ such that
$\dim(H_{e_{1},\ldots,e_{r}})$ is even.
\item[(c)] 
For every $(e_{1},\ldots,e_{r})\in P$, we have that
$\dim(H_{e_{1},\ldots,e_{r}})$ is even.

\end{itemize}

\item[(iii)] If $\Hf$ is an even arrangement then it satisfies the parity condition.

\item[(iv)] Suppose that $\Hf$ is a Coxeter arrangement. We write
$V = V_{0} \oplus V_{1}$, where $V_{0} = \bigcap_{H\in\Hf} H$
and $V_{1} = V_{0}^\perp$, and
we denote by $\Hf_{1}$ the restriction of $\Hf$ to $V_{1}$, that is, the
arrangement $\{H \cap V_{1} : H \in \Hf\}$. Note that $\Hf_{1}$ is 
an essential Coxeter arrangement and
that $\Hf$ is the Cartesian product of $\Hf_{1}$ and of the empty arrangement
on $V_{0}$. We can also decompose $\Hf_{1}$ as a product of simple Coxeter
arrangements. Let $r$ be the number of arrangements in this decomposition
that are of type $A_{n}$ with $n \equiv 2,3 \bmod 4$, $D_{n}$ for $n\geq 5$ odd or
$I_{2}(2k+1)$ for $k \geq 2$. Then $\Hf$ satisfies the parity condition 
if and only if $r+\dim(V_{0})$ is even.

In particular, if $\Hf$ is an essential Coxeter arrangement then
it satisfies the parity condition if and only it has an even number of
simple factors
of types $A_{n}$ with $n \equiv 2, 3 \bmod 4$,
$D_{n}$ for $n\geq 5$ odd or
$I_{2}(2k+1)$ for $k\geq 2$.
\end{itemize}
\label{lemma_parity_condition}
\end{lemma}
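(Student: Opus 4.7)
The plan is to prove part (i) as the main counting step; parts (ii), (iii), and (iv) then follow either formally or by routine case-checking.

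For (i), the claim reduces to the congruence $|\Hf_{e}|\equiv |\Hf|-1\bmod 2$, since $\dim H_{e} = \dim V - 1$. To establish this, reuse the equivalence relation $\sim$ on $E-\{e\}$ introduced in the proof of Lemma~\ref{lemma_parity_separation}, where $f\sim f'$ iff $H_{e}\cap H_{f}=H_{e}\cap H_{f'}$. That proof shows that the equivalence class of $f\in E-\{e\}$ has odd cardinality precisely when $f$ belongs to the subset $S$ of vectors satisfying $\im(H_{e}\cap H_{f})\equiv 0\bmod 2$, and that $|\Hf_{e}|$ equals the number of equivalence classes contained in $S$. Partitioning $|E-\{e\}| = |\Hf|-1$ into its equivalence classes and reducing modulo $2$, each class in $S$ contributes $1$ while each class outside $S$ contributes $0$; this yields the desired congruence and hence all three equivalences in (i).

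Part (ii) follows by iterating (i) along a maximal even restriction sequence $(e_{1},\ldots,e_{r})\in P$: each application of (i) preserves the parity of $|\Hf|-\dim V$ when passing to the even restricted arrangement, and at the end one reaches the empty arrangement on $H_{e_{1},\ldots,e_{r}}$, whose parity condition is that $\dim H_{e_{1},\ldots,e_{r}} = n-r$ is even. For (iii), if $\Hf$ is even then Lemma~\ref{lemma_nested_even_arrangements}(iii) gives $P=P_{n}$, so any maximal sequence has length $n$ and $\dim H_{e_{1},\ldots,e_{n}}=0$ is even; (ii) then provides the parity condition.

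For (iv), use additivity of $|\Hf|-\dim V\bmod 2$ under Cartesian products of arrangements. Writing $\Hf=\Hf_{1}\times(\text{empty arrangement on }V_{0})$ and decomposing $\Hf_{1}$ into irreducible Coxeter factors, one obtains $|\Hf|-\dim V\equiv r+\dim V_{0}\bmod 2$, where $r$ is the number of irreducible factors of $\Hf_{1}$ failing the parity condition. It then remains to check each irreducible type against its number of positive roots: $A_{n}$ has $n(n+1)/2$ in rank $n$ (fails iff $n\equiv 2,3\bmod 4$); $B_{n}$ has $n^{2}$ (always satisfies); $D_{n}$ for $n\geq 4$ has $n(n-1)$ (fails iff $n$ is odd); $I_{2}(m)$ has $m$ in rank $2$ (fails iff $m$ is odd); and $E_{6},E_{7},E_{8},F_{4},H_{3},H_{4}$ all satisfy by direct computation. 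The main point of friction is the equivalence-class bookkeeping in (i); thereafter everything is either formal or a routine type-by-type verification.
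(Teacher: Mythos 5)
Your proof is correct and follows essentially the same route as the paper's: part (i) is established by the same counting argument based on the equivalence relation $\sim$ and the parity of its classes (and in fact your bookkeeping is stated more cleanly than the paper's, which has the roles of its auxiliary sets $E_1$ and $E_2$ swapped in the text); part (ii) iterates (i) along a maximal even restriction sequence just as the paper does via induction on $|\Hf|$; part (iii) uses $P=P_n$ as in the paper; and part (iv) uses additivity of $|\Hf|-\dim V\bmod 2$ under products and the same root-count check for the irreducible types.
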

\begin{proof}
We begin by proving~(i).
As $E$ is nonempty, condition~(c) clearly implies condition~(b). 
Let $e\in E$. 
We denote by $\sim$ the equivalence relation on $E-\{e\}$ defined
by $e'\sim e''$ if and only if $H_{e} \cap H_{e'}=H_{e} \cap H_{e''}$.
Let $E_{1}$, respectively $E_{2}$,
be the set of $e'\in E-\{e\}$ such that the intersection multiplicity
of $H_{e} \cap H_{e'}$ is odd, respectively even. Then $E_{1}$ and $E_{2}$ are unions
of equivalence classes of the relation~$\sim$.
Since the element $e$ is not in the set $E-\{e\}$, the intersection multiplicity of $H_{e} \cap H_{e'}$
has the opposite parity of the cardinality of the equivalence class $e'$ belongs to.
As all the equivalence classes contained in~$E_{1}$ have even cardinality, the set~$E_{1}$ also has
even cardinality. Moreover, we have a surjective map from~$E_{2}$
to $\Hf_{e}$ sending $e'\in E_{2}$ to $H_{e}\cap H_{e'}$, and the fibers of this map
are the equivalence classes contained in $E_{2}$, all of which have
odd cardinality. Thus we obtain $|E_{2}| \equiv |\Hf_{e}| \bmod 2$. We deduce that
$|\Hf| = |E|=1+|E_{1}|+|E_{2}| \equiv 1+|\Hf_{e}| \bmod 2$. This shows that $\Hf$ satisfies
the parity condition if and only if $\Hf_{e}$ does. As $e$ was arbitrary,
this argument proves that (a) implies~(c) and that (b) implies~(a).

Next we prove~(ii) by induction on $|\Hf|$. Note that (c) always
implies (b), so we just need to prove that (a) implies (c) and
that (b) implies (a).
If $\Hf$ is empty then $P=\varnothing$, and the three conditions
(a), (b) and (c) are equivalent to the fact that $\dim(V)$ is even.
Suppose that $|\Hf|\geq 1$.
If $\Hf$ satisfies the
parity condition, let $(e_{1},\ldots,e_{r})\in P$. We have $r\geq 1$
because $\Hf$ is not empty, so $\Hf_{e_{1}}$ satisfies the parity
condition by~(ii). As $(e_{2},\ldots,e_{r})$ is a maximal even restriction
sequence for~$\Hf_{e_{1}}$, the induction hypothesis implies
that $\dim(H_{e_{1},\ldots,e_{r}})$ is even. Conversely, suppose that
there exists $(e_{1},\ldots,e_{r})\in P$ such that $\dim(H_{e_{1},\ldots,e_{r}})$
is even.
By the induction hypothesis
$\Hf_{e_{1}}$
satisfies the parity condition
so $\Hf$ satisfies the parity condition by (ii).

If $\Hf$ is even then $P=P_n$, so $H_{e_{1},\ldots,e_{n}}=\{0\}$ for every
$(e_{1},\ldots,e_{n})\in P$. Hence (iii) follows from~(ii).

Finally, we prove~(iv).
Simple Coxeter arrangements satisfy the parity condition
if and only if they are of types $A_{n}$ with $n \equiv 0,1 \bmod 4$, $B_{n}$,
$D_{n}$ with $n \geq 4$ even,
$E_{6}$, $E_{7}$, $E_{8}$, $F_{4}$,
or $I_{2}(2k)$ with $k\geq 2$.
Thus $|\Hf| + \dim(V) \equiv r + \dim(V_{0}) \bmod 2$, which implies the result.
\end{proof}

\begin{remark}
{\rm
By
Lemma~\ref{lemma_parity_condition}
the evenness condition implies the parity condition,
but it is much stronger condition.
(In dimensions at most~$2$, the two conditions are equivalent).
For example,
the Coxeter arrangements of type $A_{2} \times A_{3}$,
$E_{6}$ and $A_{n}$ for $n \equiv 0,1 \bmod 4$ satisfy the
parity condition but not the evenness condition.
}
\label{remark_parity_condition}
\end{remark}

If $\Hf$ satisfies the parity condition then $(n-r)/2$ is an integer
for every $(e_1,\ldots,e_r)\in P$, so if the ball $\Ball(0,R)$ is
sufficiently symmetric with respect to $\Hf$, 
Corollary~\ref{corollary_pizza_ball_parity}
implies that the pizza
quantity $P(\Hf,\Ball(a,R))$ is polynomial in $a$ and $R$ as long
as $\|a\|\leq R$.

In this section we will see that we do not need the condition
that the ball is sufficiently symmetric for this result.
We will actually consider slightly more general ``balls'' that are
convex bodies bounded by quadratic hypersurfaces. Let
$q$ be a positive definite quadratic form on $V$, so that
$(q(x+y)-q(x)-q(y))/2$ is an inner product on $V$.
For $a\in V$ and $R \geq 0$, we write
\[\Ball_q(a,R)=\{x\in V : q(x-a)\leq R^2\}.\]
If $q$ is the quadratic form defined by $q(x)=(x,x)$ then
$\Ball_q(a,R)$ is the usual ball $\Ball(a,R)$ with center $a$ and
radius $R$. Note also that $\Ball_q(a,R)=a+\Ball_q(0,R)$ and
$\Ball_q(0,R)=R\cdot\Ball_q(0,1)$.

\begin{theorem}
Let $C$ be a closed convex polyhedral cone, that is, an intersection
of closed half-spaces in $V$.
Then there exists a polynomial function
$g_{C,q}$ on $\Rrr\times V$ such that:
\begin{itemize}
\item[(a)]
The polynomial $g_{C,q}$ is homogeneous of degree $n=\dim(V)$.
\item[(b)]
For every $(R,a)\in\Rrr\times V$, we have $g_{C,q}(R,a)=g_{C,q}(-R,a)$,
that is, $g_{C,q}$ 
contains only terms of even degree in the first variable $R$.
\item[(c)]
For every $(R,a)\in\Rrr\times V$ such that $q(a)\leq R^2$, we have
\begin{align*}
\Vol(C\cap\ \Ball_q(a,R))+(-1)^{\dim(V)}\Vol((-C)\cap\ \Ball_q(a,R))
& =
g_{C,q}(R,a) .
\end{align*}
\end{itemize}
\label{theorem_more_polynomial_pizza_by_sector}
\end{theorem}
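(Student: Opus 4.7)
The plan is to reduce to the Euclidean ball and then compute the symmetrized volume in spherical coordinates centered at the origin.

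Since $q$ is positive definite, there is a linear automorphism $\phi$ of $V$ with $q(x) = \|\phi(x)\|^{2}$, namely the positive square root of the symmetric operator representing $q$ in the given inner product. Then $\Ball_{q}(a,R) = \phi^{-1}(\Ball(\phi(a),R))$, and the change of variables formula gives
\[\Vol(\pm C \cap \Ball_{q}(a,R)) = |\det\phi|^{-1}\, \Vol(\pm\phi(C) \cap \Ball(\phi(a),R)).\]
Since $\phi(C)$ is again a closed convex polyhedral cone with apex at $0$, and $a \longmapsto \phi(a)$ is linear, it suffices to treat the Euclidean case $q(x)=(x,x)$.

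In the Euclidean case one may assume $\dim C = n$, otherwise both volumes vanish and $g_{C,q}=0$ works. The hypothesis $q(a) \leq R^{2}$, that is, $\|a\| \leq R$, means $0 \in \Ball(a,R)$, so every ray from the origin meets the sphere $\partial\Ball(a,R)$ exactly once, at distance $r(\theta) = (\theta,a) + \sqrt{R^{2} - \|a\|^{2} + (\theta,a)^{2}}$. Integrating in spherical coordinates,
\[\Vol(C \cap \Ball(a,R)) = \frac{1}{n} \int_{S^{n-1} \cap C} r(\theta)^{n}\, d\sigma(\theta),\]
while the substitution $\theta \mapsto -\theta$ gives $\Vol((-C) \cap \Ball(a,R)) = \frac{1}{n} \int_{S^{n-1} \cap C} r(-\theta)^{n}\, d\sigma(\theta)$. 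Writing $u = (\theta,a)$ and $v = \sqrt{R^{2} - \|a\|^{2} + u^{2}}$, the binomial theorem yields
\[r(\theta)^{n} + (-1)^{n} r(-\theta)^{n} = (u+v)^{n} + (-1)^{n} (v-u)^{n} = 2 \sum_{\substack{0 \leq k \leq n \\ k \equiv n\, (\mathrm{mod}\, 2)}} \binom{n}{k} u^{k} v^{n-k}.\]
The parities of $k$ and $n$ agree, so $n-k$ is even and $v^{n-k} = (R^{2} - \|a\|^{2} + u^{2})^{(n-k)/2}$ is a polynomial in $R^{2}$, $\|a\|^{2}$ and $u$, with no surviving square root.

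Integrating over $S^{n-1} \cap C$ then produces a polynomial in $(R,a)$, because each power $(\theta,a)^{j}$ integrates to a polynomial of degree $j$ in $a$. This defines the polynomial $g_{C,q}$ on all of $\Rrr \times V$ (for general $q$, via the formula $g_{C,q}(R,a) = |\det\phi|^{-1}\, g_{\phi(C),\text{std}}(R,\phi(a))$), and property (c) holds on the open set $\{(R,a) : R>0,\ q(a) < R^{2}\}$, which has non-empty interior, so the polynomial extension is unique. Under the scaling $(R,a) \mapsto (\lambda R, \lambda a)$ both $u$ and $v$ scale by $\lambda$, so every term $u^{k} v^{n-k}$ scales by $\lambda^{n}$, giving property (a); since $R$ appears only through $R^{2}$, property (b) is immediate.

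I do not anticipate a serious obstacle: the computation is entirely explicit. The only slightly delicate points are that the spherical coordinate formula requires exactly the hypothesis $q(a) \leq R^{2}$ (so that the ray from $0$ exits the ball once at positive distance), and that the parity matching of $k$ and $n$ is precisely what forces $v^{n-k}$ to be polynomial rather than involve a square root — this parity mechanism is the heart of why the symmetrized volume is polynomial at all.
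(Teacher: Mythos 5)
Your proof is correct, and it takes a genuinely different route from the paper's. The paper proves the theorem by induction on $\dim(V)$: it applies the divergence-theorem recursion of Theorem~\ref{theorem_reduction}(i) to reduce to the facets $C\cap H_e$, observes that $\Ball_q(sa,R)\cap H_e$ is again a body of the form $\Ball_{q_e}(\cdot,\cdot)$ in $H_e$, and then uses the inductive hypothesis on $H_e$; the parity cancellation happens because the inductive output involves only even powers of the ``radius'' and those even powers absorb the $\sqrt{R^2 - s^2 q(a-\pi_{q,e}(a))}$. Your argument instead reduces to the Euclidean ball by a linear change of variables and then integrates in spherical coordinates centered at the origin. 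The parity mechanism is the same phenomenon but surfaces in a different place: in your computation it is the binomial identity $(u+v)^n + (u-v)^n = 2\sum_{k\equiv n}\binom{n}{k}u^k v^{n-k}$, which forces $v$ to appear only in even powers so the square root disappears. Your approach is more explicit and self-contained for this particular statement (it even gives a closed-form integral for $g_{C,q}$), whereas the paper's inductive argument fits into the general machinery of even restriction sequences used throughout. One small point worth tightening: when $q(a)=R^2$, the origin is on the boundary of $\Ball_q(a,R)$ and the ray in the direction $-a$ meets the sphere only at $0$, so ``exactly once'' is slightly off; but the formula $r(\theta)=(\theta,a)+\sqrt{R^2-\|a\|^2+(\theta,a)^2}$ still gives the correct (possibly zero) upper limit of integration, and in any case one can establish the identity on the open set $q(a)<R^2$ and conclude by polynomial continuation.
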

\begin{proof}
We prove the result by induction on the dimension of $V$.
If $\dim(V)=0$, then $V=C=\{0\}$ and
$\Vol(C\cap\ \Ball_q(a,R))+(-1)^{\dim(V)}\Vol((-C)\cap\ \Ball_q(a,R))=2$
if $q(a)\leq R^2$, so we can take $g_{C,q}=2$.

Suppose that $\dim(V)\geq 1$ and that we know the result for
lower-dimensional inner product spaces.
Let $(R,a)\in\Rrr\times V$ such that $q(a)\leq R^2$.
Let $\Hf$ be the set of hyperplanes containing a facet of $C$.
For every $H\in\Hf$, we denote by $e_H$ the unit normal vector of
$H$ that points to the half-space containing $C$, and we set
$E=\{e_H : H\in\Hf\}$.
By Theorem~\ref{theorem_reduction}(i) we have 
\begin{align*}
&
\Vol(C\cap\ \Ball_q(a,R))+(-1)^{\dim(V)}\Vol((-C)\cap\ \Ball_q(a,R)) \\
&
-(\Vol(C\cap\ \Ball_q(0,R))+(-1)^{\dim(V)}\Vol((-C)\cap\ \Ball_q(0,R)))
\\
= \: &2 \cdot \sum_{e\in H}
\bl{a}{e}
\cdot
\int_{0}^{1}\left(\Vol(C\cap H_e\cap\ \Ball_q(sa,R))
+
(-1)^{\dim(H_e)}
\Vol((-C)\cap H_e\cap\ \Ball_q(sa,R))\right)\:ds.
\end{align*}
Indeed, the facets of $C$ and $-C$
are exactly the relative interiors of the intersections
$C\cap H_e$, respectively $(-C)\cap H_e$, for $e\in E$.
Moreover, for every $e\in E$,
as the vector $e$ points towards~$C$, hence away from $-C$,
we have $(-1)^{\mathring{C}}(-1)^{U\circ e}=1$ 
if $U$ is the relative interior of $C\cap H_e$ and
$(-1)^{\mathring{C}}(-1)^{U\circ e}=-1$ 
if $U$ is the relative interior of $(-C)\cap H_e$.

Let $e \in E$.
We denote by $\pi_{q,e}$ the orthogonal projection on $H_{e}$
with respect to the inner product $(q(x+y)-q(x)-q(y))/2$
corresponding to the quadratic form $q$. 
Then for all
$a\in V$ and $x\in H_e$, we have
\[q(x-a)=q(x-\pi_{q,e}(a))+q(a-\pi_{q,e}(a)).\]
In particular, the convex body $\Ball_q(sa,R)\cap H_{e}$
is given by
\begin{align*}
\Ball_q(sa,R) \cap H_{e}
& =
\Ball_{q_e}\left(s\pi_{q,e}(a),\sqrt{R^2-s^2q(a-\pi_{q,e}(a))}\right) ,
\end{align*}
where $q_e$ is the restriction of $q$ to $H_e$.
Hence the induction hypothesis applied to $H_e$ yields that
\begin{align*}
&
\Vol(C\cap H_e\cap\ \Ball_q(sa,R))+
(-1)^{\dim(H_e)}\Vol((-C)\cap H_e\cap\ \Ball_q(sa,R)) \\
= \:&
g_{C\cap H_e,q_e}\left(\sqrt{R^{2} - s^{2} \cdot q(a-\pi_{q,e}(a))}, s\pi_{q,e}(a)\right) .
\end{align*}
By conditions~(a) and~(b), there exist homogeneous polynomial functions $g_{C\cap H_e,i}$
of degree $n-1-2i$ on $H_e$ such that, for every $(t,x)\in\Rrr\times H_e$,
we have
$g_{C\cap H_e}(t,x) = \sum_{i=0}^{\lfloor (n-1)/2\rfloor}t^{2i} \cdot g_{C\cap H_e,i}(x)$.
We obtain
\begin{align*}
&
\int_{0}^{1}\left(\Vol(C\cap H_e\cap\ \Ball_q(sa,R))+
(-1)^{\dim(H_e)}
\Vol((-C)\cap H_e\cap\ \Ball_q(sa,R))\right)
\: ds \\
= \: &
\sum_{i=0}^{\lfloor \frac{n-1}{2} \rfloor}
g_{C\cap H_e,i}(\pi_{q,e}(a))
\cdot
\int_{0}^{1}
\left(R^{2} - s^{2} \cdot q(a-\pi_{q,e}(a))\right)^{i} 
\cdot s^{n-1-2i} \: ds .
\end{align*}
We define a function $g^{(0)}_{C}:\Rrr\times V \longrightarrow\Rrr$ by
\begin{align*}
g^{(0)}_{C}(t,x)
& =
2 \cdot \sum_{e\in E}
\bl{x}{e} 
\cdot \sum_{i=0}^{\lfloor \frac{n-1}{2} \rfloor}
g_{\Hf_{e},i}(\pi_{q,e}(x))
\cdot
\int_{0}^{1}\left(t^{2} - s^{2} \cdot q(x-\pi_{q,e}(x))\right)^{i}
\cdot s^{n-1-2i} \: ds ,
\end{align*}
for $(t,x) \in \Rrr \times V$. As the functions $x\longmapsto (x,e)$ and
$x \longmapsto \pi_{q,e}(x)$ are linear in $x$ and the function
$x\longmapsto q(x)$ is quadratic in $x$,
the function~$g_C^{(0)}$ is
a homogeneous polynomial of degree $\dim(V)$ that satisfies condition (b), and
we have 
\begin{align*}
&
\Vol(C\cap\ \Ball_q(a,R))+(-1)^{\dim(V)} \Vol((-C)\cap\ \Ball_q(a,R)) \\
= & 
\Vol(C\cap\ \Ball_q(0,R))
+(-1)^{\dim(V)}\Vol((-C)\cap\ \Ball_q(0,R))
+g_C^{(0)}(R,a).
\end{align*}
Suppose that $n=\dim(V)$ is even.
As $\Vol(Z\cap\ \Ball_q(0,R)) = R^{n} \cdot \Vol(Z\cap\ \Ball_q(0,1))$
for $Z=C$ or $Z=-C$,
the function $g_{C,q}$ defined by
\begin{align*}
g_{C,q}(t,x)
& =
t^{n} \cdot  (\Vol(C\cap\ \Ball_q(0,1))+\Vol((-C)\cap\Ball_q(0,1))) + g_C^{(0)}(t,x)
\end{align*}
satisfies the desired properties. Suppose that $\dim(V)$ is odd.
Then, as $\Ball_q(0,R)$ is centrally symmetric, we have
$\Vol(C\cap\ \Ball_q(0,R))-\Vol((-C)\cap\Ball_q(0,R))=0$ for
every $R\geq 0$,
so the function
$g_{C,q} = g_{C}^{(0)}$ satisfies the desired properties.
\end{proof}

\begin{corollary}
Suppose that $\Hf$ satisfies the parity condition. 
Then there exists a polynomial function
$g_{\Hf,q}$ on $\Rrr\times V$ such that:
\begin{itemize}
\item[(a)]
The polynomial $g_{\Hf,q}$ is homogeneous of degree $n=\dim(V)$.
\item[(b)]
For every $(R,a)\in\Rrr\times V$, we have $g_{\Hf,q}(R,a)=g_{\Hf,q}(-R,a)$,
that is, $g_{\Hf,q}$ 
contains only terms of even degree in the first variable $R$.
\item[(c)]
For every $(R,a)\in\Rrr\times V$ such that $q(a)\leq R^2$, we
have $P(\Hf,\Ball_q(a,R))=g_{\Hf,q}(R,a)$.
\end{itemize}
\label{corollary_more_polynomial_pizza}
\end{corollary}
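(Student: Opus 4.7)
The plan is to reduce the corollary to Theorem~\ref{theorem_more_polynomial_pizza_by_sector} by decomposing the pizza quantity as a weighted sum of the cone quantities $\Vol(C\cap\Ball_q(a,R))+(-1)^n\Vol((-C)\cap\Ball_q(a,R))$ for $C$ the closure of each chamber of $\Hf$.

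The key preliminary observation is a sign identity coming from the centrality of $\Hf$. Since every hyperplane of $\Hf$ contains the origin, the map $T\longmapsto -T$ is an involution on the set of chambers, and every hyperplane of $\Hf$ separates $T$ from $-T$. Therefore $|S(T_{0},-T)|\equiv |S(T_{0},T)|+|\Hf|\pmod{2}$, so $(-1)^{-T}=(-1)^{|\Hf|}\cdot(-1)^{T}$. The parity condition gives $|\Hf|\equiv n\pmod{2}$, hence $(-1)^{-T}=(-1)^{n}(-1)^{T}$.

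Using this identity and reindexing the sum $T\longmapsto -T$, I would write
\begin{align*}
2 P(\Hf,\Ball_q(a,R))
& = \sum_{T\in\Tf}(-1)^{T}\Vol(T\cap\Ball_q(a,R))+\sum_{T\in\Tf}(-1)^{-T}\Vol((-T)\cap\Ball_q(a,R))\\
& = \sum_{T\in\Tf}(-1)^{T}\left(\Vol(\overline{T}\cap\Ball_q(a,R))+(-1)^{n}\Vol((-\overline{T})\cap\Ball_q(a,R))\right),
\end{align*}
where we used that $\partial T$ has measure zero. Each closed chamber $\overline{T}$ is a closed convex polyhedral cone, so Theorem~\ref{theorem_more_polynomial_pizza_by_sector} applied with $C=\overline{T}$ provides a homogeneous degree $n$ polynomial $g_{\overline{T},q}$ on $\Rrr\times V$, even in $R$, such that the bracketed quantity equals $g_{\overline{T},q}(R,a)$ whenever $q(a)\leq R^{2}$.

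I would then define
\begin{align*}
g_{\Hf,q}(R,a) & = \frac{1}{2}\sum_{T\in\Tf}(-1)^{T}\,g_{\overline{T},q}(R,a).
\end{align*}
Properties (a) and (b) follow directly because they are preserved under linear combinations, and (c) holds by construction on the region $q(a)\leq R^{2}$. No serious obstacle is expected: the only delicate point is the sign computation $(-1)^{-T}=(-1)^{n}(-1)^{T}$, which is precisely where the parity hypothesis is used, and it is exactly the shape required to match the symmetric combination in Theorem~\ref{theorem_more_polynomial_pizza_by_sector}.
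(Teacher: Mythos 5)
Your proof is correct and follows essentially the same approach as the paper: both use the parity condition to derive $(-1)^{-T}=(-1)^{n}(-1)^{T}$ for a central arrangement and then apply Theorem~\ref{theorem_more_polynomial_pizza_by_sector} to each pair of opposite closed chambers. The only cosmetic difference is that you sum over all chambers with a factor of $\tfrac{1}{2}$, whereas the paper sums over a chosen set of representatives $\Tf'$ containing exactly one of $T$, $-T$ from each pair.
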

\begin{proof}
As $\Hf$ satisfies the parity condition, we have
$(-1)^{-T}=(-1)^{\dim(V)}(-1)^T$ for every chamber~$T$ of $\Hf$,
hence $P(\Hf,\Ball_q(a,R))$ is an alternating sum of quantities
as in Theorem~\ref{theorem_more_polynomial_pizza_by_sector}.
More precisely, choose a subset $\Tf'$ of $\Tf(\Hf)$ such that,
for every chamber $T$ of $\Hf$, exactly one of $T$ or $-T$ is in
$\Tf'$. Then, for all $a\in V$ and $R\in\R_{\geq 0}$, we have
\[P(\Hf,\Ball_q(a,R))=\sum_{T\in\Tf'}(-1)^T\left(
\Vol(T\cap\ \Ball_q(a,R))+(-1)^{\dim(V)}\Vol((-T)\cap\ \Ball_q(a,R))
\right).\]
So it suffices to take
\begin{align*}
g_{\Hf,q} & = \sum_{T\in\Tf'}(-1)^T g_{\overline{T},q}.
\qedhere
\end{align*}
\end{proof}

\begin{theorem}
Let $q$ be a positive definite quadratic form on $V$.
Suppose that $\Hf$ satisfies the parity condition and that it
contains a Coxeter arrangement $\Hf'$ whose Coxeter group preserves
$\Hf$ and the convex body $\Ball_q(0,1)$.
(For example, these conditions are satisfied if $\Hf=\Hf'$ is
a Coxeter arrangement and $q$ is given by $q(x)=(x,x)$.)
Let $(R,a)\in\Rrr\times V$ such that $q(a)\leq R^2$,
equivalently $0 \in \Ball_q(a,R)$.
\begin{itemize}
\item[(i)] If $|\Hf'|>\dim(V)$, so in particular $\Hf'$ is not of type $A_{1}^{\dim(V)}$,
then the pizza quantity
$P(\Hf,\Ball_q(a,R))$ vanishes.

\item[(ii)] If $|\Hf'|=\dim(V)$ (for example if $\Hf'$ is of type
$A_{1}^{\dim(V)}$, or the product of a type $A_{2}$ arrangement
on~$\Rrr^{2}$ and the empty
arrangement on $\Rrr$), 
then there exists a constant $c$ independent of the center~$a$ and the radius~$R$ such
that $P(\Hf,\Ball_q(a,R))=
c\cdot\prod_{e\in E'}(a,e)$, where 
$E'$ is the subset of $E$ corresponding to the hyperplanes of $\Hf'$.
In particular, 
the pizza quantity
$P(\Hf,\Ball_q(a,R))$ is independent of the radius~$R$.
\end{itemize}
\label{theorem_almost_all_the_products}
\end{theorem}
\begin{proof}
The proof is very similar to that of 
Proposition~\ref{proposition_polynomial_Coxeter_arrangement}.
Let $W$ be the Coxeter group of $\Hf'$.
Fix $R > 0$, and define a function
$h:V \longrightarrow \Rrr$ by $h(x)=g_{\Hf,q}(R,x)$. 
Then $h$ is polynomial of degree at most $\dim(V)$,
and we have $P(\Hf,\Ball_q(a,R))=h(a)$ for every $a\in V$ such that
$q(a)\leq R^2$. By Corollary~\ref{corollary_Coxeter_on_a_hyperplane} this
implies that $h(w(x))=(-1)^{w} \cdot h(x)$ for every $w\in W$ and $x\in V$,
and in particular that $h(x)=0$ if $x$ is on one of the hyperplanes of
$\Hf'$. If $h$ is nonzero then the hypersurface $\{h=0\}$ is
of degree at most $\dim(V)$ and contains $\bigcup_{H\in\Hf'}H$.
But this is impossible
in the situation of (i), so we conclude that $h=0$ in that situation.
In the situation of (ii), this
is possible and implies that $h$ is of the form
$x \longmapsto c(R) \cdot \prod_{e\in E'} \bl{x}{e}$,
for some $c(R)\in\Rrr$ not depending on $x$. 
By definition of $h$ we obtain that
$g_{\Hf,q}(R,a)=c(R) \cdot \prod_{e\in E'} \bl{a}{e}$
for every $(R,a)\in\Rrr\times V$
such that $q(a)\leq R^2$. As $g_{\Hf,q}$ and the function
$x \longmapsto \prod_{e\in E'} \bl{x}{e}$
are both polynomials homogeneous of degree $\dim(V) = |\Hf'|$,
this implies that the function $R \longmapsto c(R)$ is constant,
completing the proof.
\end{proof}

\begin{remark}
{\rm
If $\Hf$ is the product of the empty arrangement in $\Rrr$ and a type
$I_{2}(2k+1)$ arrangement in $\Rrr^{2}$ with $k\geq 2$, then the corollary
recovers the case of Entr\'ee~2 on page~433
of~\cite{Mabry_Deiermann} where $N\geq 5$ is odd. 
}
\end{remark}

If the arrangement $\Hf$ does not satisfy the parity condition,
then it is not true in general
that the pizza quantity $P(\Hf,\mathbb{B}(a,R))$ is polynomial in $a$ and $R$.
See for example the calculations on page~429 of the paper~\cite{Mabry_Deiermann}.
We can, however, control its behavior as $R$ tends to infinity in the case where
$\Hf$ is a Coxeter arrangement.

\begin{proposition}
Suppose that $\Hf$ is a Coxeter arrangement with Coxeter group $W$
in an $n$-dimensional vector space $V$.
Let $K$ be a measurable subset of $V$ with finite volume that is stable by
$W$ and contains a neighborhood of $0$. Suppose that there exists
an integer $k\geq 0$ and a function $h:V \longrightarrow \Rrr$
of type $C^{k}$ at $0$ such
that $P(\Hf,K+a)=h(a)$ for $a$ in a neighborhood of $0$. 

Fix the center $a\in V$. Then $0\in a+rK$ for $r>0$ large enough and the function
$P(\Hf,a+rK)$ satisfies
\begin{align*}
P(\Hf,a+rK) &
=
o\left(r^{n-\min(k,|\Hf|-1)}\right) 
\end{align*}
as $r \longrightarrow +\infty$. 
In particular, if $k\geq n$ and $|\Hf|\geq n+1$ then we have
${\displaystyle \lim_{r \to +\infty}P(\Hf,a+rK)=0}$.
\label{proposition_going_to_infinity}
\end{proposition}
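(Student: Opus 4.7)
The plan is to reduce the assertion to a local analysis of $h$ at the origin, and then to exploit invariant theory of the Coxeter group $W$.

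First, I would exploit a scaling identity. Since every hyperplane of $\Hf$ passes through the origin, each chamber $T$ is a cone with apex $0$, so for any $r>0$ and any measurable $L$ of finite volume, $\Vol(rL\cap T)=r^n\Vol(L\cap T)$. Hence
\begin{align*}
P(\Hf,a+rK) \;=\; P(\Hf,r(K+a/r)) \;=\; r^{\,n}\,P(\Hf,K+a/r).
\end{align*}
Because $K$ contains a neighborhood of $0$, for $r$ large we have $-a/r\in K$, hence $0\in a+rK$; moreover $a/r$ lies in the neighborhood of $0$ where $P(\Hf,K+\cdot)=h$. Thus $P(\Hf,a+rK)=r^{n}h(a/r)$ for $r$ sufficiently large, and the problem reduces to estimating $h$ near the origin.

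Next I would show that $h$ is $W$-anti-invariant: for every $w\in W$ and every $x$ in a small neighborhood of $0$, $h(w(x))=\det(w)\,h(x)$. Since $K$ is stable under $W$, for every $w\in W$ and $a\in V$ we have $\ungras_{K+a}\circ w=\ungras_{w^{-1}(K+a)}=\ungras_{K+w^{-1}(a)}$. Applying Corollary~\ref{corollary_Coxeter_on_a_hyperplane} to $f=\ungras_{K+a}$ gives $P(\Hf,K+w^{-1}(a))=\det(w)\,P(\Hf,K+a)$, which translates into the anti-invariance of $h$ near $0$.

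The heart of the argument is then an invariant-theoretic computation. Since $h$ is $C^{k}$ at $0$ it admits a Taylor polynomial $T$ of degree at most $k$ with $h(x)=T(x)+o(\|x\|^{k})$. The anti-invariance $h\circ w=\det(w)\,h$, together with the uniqueness of the Taylor polynomial, forces $T\circ w=\det(w)\,T$ for every $w\in W$. A classical result on finite reflection groups (see~\cite{Bourbaki,Humphreys}) states that every $W$-anti-invariant polynomial is divisible by the product $\prod_{e\in E}\bl{e}{x}$ of linear forms defining the hyperplanes of $\Hf$; indeed, for each reflection $s_{e}\in W$ the identity $T\circ s_{e}=-T$ shows that $T$ vanishes on $H_{e}$, whence $\bl{e}{x}$ divides $T$, and the factors $\bl{e}{x}$ are pairwise coprime. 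Consequently any nonzero $W$-anti-invariant polynomial has degree at least $|\Hf|$, so every homogeneous component of $T$ of degree strictly less than $|\Hf|$ vanishes. Therefore $h(x)=o(\|x\|^{\min(k,|\Hf|-1)})$ as $x\to 0$.

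Combining the three steps yields
\begin{align*}
P(\Hf,a+rK) \;=\; r^{n}h(a/r) \;=\; r^{n}\cdot o\!\left(r^{-\min(k,|\Hf|-1)}\right) \;=\; o\!\left(r^{\,n-\min(k,|\Hf|-1)}\right),
\end{align*}
as $r\to+\infty$, and the final assertion ${\displaystyle\lim_{r\to+\infty}P(\Hf,a+rK)=0}$ is then the special case where the exponent $n-\min(k,|\Hf|-1)$ is non-positive, i.e.\ $k\geq n$ and $|\Hf|\geq n+1$. The main obstacle is the passage from the functional anti-invariance of $h$ to the divisibility statement for $T$; this is where the invariant theory of reflection groups is essential, but once the Taylor polynomial is known to be anti-invariant the divisibility by the discriminant $\prod_{e\in E}\bl{e}{x}$ is standard.
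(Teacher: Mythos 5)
Your proof is correct and follows essentially the same strategy as the paper: reduce via the cone scaling identity $P(\Hf,a+rK)=r^{n}P(\Hf,K+a/r)$, show $h$ is anti-invariant near $0$ using Corollary~\ref{corollary_Coxeter_on_a_hyperplane}, and apply Taylor's theorem together with the observation that a nonzero anti-invariant polynomial must vanish on all hyperplanes of~$\Hf$ and hence have degree at least $|\Hf|$. The only stylistic difference is that you phrase the key step as divisibility by the discriminant $\prod_{e\in E}\bl{e}{x}$ while the paper argues directly that the degree-$i$ hypersurface $\{h^{(i)}=0\}$ cannot contain $\bigcup_{H\in\Hf}H$ when $i<|\Hf|$; these are the same argument.
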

\begin{proof}
By the multivariate version of Taylor's theorem, there exist polynomial
functions $h^{(i)}:V\longrightarrow\Rrr$ for $0\leq i\leq k$ such that
$h^{(i)}$ is homogeneous of degree $i$ and
\begin{align*}
h(a)
& =
\sum_{i=0}^k \frac{1}{i!} \cdot h^{(i)}(a) + o\left(\|a\|^{k}\right).
\end{align*}
We also have explicit formulas for the $h^{(i)}$ involving partial
differentials of $h$. In particular the fact that
$h(w(b))=(-1)^{w} \cdot  h(b)$ for $w\in W$ and $b$ in a neighborhood of
$0$ implies that $h^{(i)}(w(a))=(-1)^{w} \cdot  h^{(i)}(a)$ for every
$i$, every $w\in W$ and every $a\in V$. Let $0 \leq i \leq k$, and
suppose that $i\leq|\Hf|-1$. As in the proof of 
Proposition~\ref{proposition_polynomial_Coxeter_arrangement}, if $h^{(i)} \neq 0$,
then the hypersurface $\{h^{(i)}=0\}$ is of degree $i$ and contains
$\bigcup_{H\in\Hf} H$. This is not possible hence $h^{(i)} = 0$.
If $k \geq |\Hf|$, we deduce that
$h(a) = h^{(|\Hf|)}(a) + o(\|a\|^{|\Hf|}) = o(\|a\|^{|\Hf|-1})$.
If $k \leq |\Hf|-1$, we deduce that $h(a) = o(\|a\|^{k})$.
So we conclude that $h(a)=o(\|a\|^{\min(k,|\Hf|-1)})$.

We now fix $a\in V$. If $r>0$ then $a+rK = r \cdot (a/r+K)$, so $0\in a+rK$ as soon
as $-a/r\in K$. This holds for large enough $r$ by the assumption on $K$.
Also, we have
$P(\Hf,a+rK) = r^{n} \cdot P(\Hf,a/r+K) = r^{n} \cdot  h(a/r)$ for large enough $r$,
so $P(\Hf,a+rK)=o(r^{n-\min(k,|\Hf|-1)})$. If $|\Hf|\geq n+1$ and $k\geq n$,
this implies in particular that $P(\Hf,a+rK)=o(1)$.
\end{proof}

\begin{remark}
{\rm
As in Theorem~\ref{theorem_almost_all_the_products}, we
just need in Proposition~\ref{proposition_going_to_infinity}
the fact that $\Hf$ contains a Coxeter subarrangement
$\Hf'$ whose Coxeter groups stabilizes both the arrangement $\Hf$ and the set~$K$.
}
\end{remark}

\begin{corollary}
Suppose that $\Hf$ is a Coxeter arrangement
in an $n$-dimensional inner product space~$V$
that has at least $n+1$ hyperplanes.
Let $q$ be a positive definite quadratic form on~$V$, and
fix a point $a\in V$.
Then the function $R \longmapsto P(\Hf,\Ball_q(a,R))$ is
$o(R^{n-|\Hf|+1})$ as $R \longrightarrow +\infty$, and in particular,
${\displaystyle \lim_{R \to +\infty}P(\Hf,\Ball_q(a,R))=0}$.
\label{corollary_going_to_infinity}
\end{corollary}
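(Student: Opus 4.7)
The plan is to derive this corollary as a direct application of Proposition~\ref{proposition_going_to_infinity} to the convex body $K = \Ball_q(0,1)$. First I would verify the hypotheses of that proposition: $K$ is a compact convex set, hence measurable with finite volume; it contains a Euclidean neighborhood of the origin because $q$ is positive definite; and it is stable by the Coxeter group $W$ because $q$ is $W$-invariant by hypothesis. Moreover, for every $r > 0$ one has $rK = \Ball_q(0, r)$ and consequently $a + rK = \Ball_q(a, r)$, so the asymptotic conclusion of Proposition~\ref{proposition_going_to_infinity} translates verbatim into the asymptotic for $R \longmapsto P(\Hf, \Ball_q(a, R))$ that we wish to prove.

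The only remaining hypothesis to check is that $a \longmapsto P(\Hf, K+a)$ coincides near the origin with some function $h$ of class $C^{k}$, with $k$ chosen as large as we please. I would argue that $a \longmapsto P(\Hf, \Ball_q(a, 1))$ is in fact real analytic on the open set $\{a \in V : q(a) < 1\}$, which contains a neighborhood of~$0$. Each chamber $T$ of $\Hf$ is an open polyhedral cone with apex at $0$, and when $q(a) < 1$ the origin lies in the interior of $\Ball_q(a, 1)$; parametrizing $T \cap \Ball_q(a, 1)$ in polar coordinates from the origin gives
\[
\Vol(T \cap \Ball_q(a,1)) = \int_{T \cap S^{n-1}} \frac{r(u, a)^{n}}{n} \, d\sigma(u),
\]
where $r(u, a)$ is the unique positive solution in $\rho$ to the quadratic $q(\rho u - a) = 1$. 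Solving via the quadratic formula exhibits $r(u, a)$ as an analytic function of $a$ on the region $q(a) < 1$, and integration over $u$ preserves analyticity, so the signed sum over chambers is analytic near~$0$.

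With regularity in hand, I apply Proposition~\ref{proposition_going_to_infinity} with $k = |\Hf| - 1$ to obtain $P(\Hf, a + rK) = o(r^{n - |\Hf| + 1})$ as $r \longrightarrow +\infty$, which is exactly the stated bound after substituting $r = R$. Since $|\Hf| \geq n+1$ by hypothesis, the exponent $n - |\Hf| + 1$ is non-positive, so in particular the pizza quantity tends to~$0$. The main (modest) obstacle is the smoothness verification of $a \longmapsto P(\Hf, \Ball_q(a,1))$ near~$0$; this is where we use that the origin lies strictly inside the quadratic body, which keeps the radial function $r(u,a)$ from degenerating. Once this regularity is in place, the rest is bookkeeping through Proposition~\ref{proposition_going_to_infinity}.
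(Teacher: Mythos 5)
Your proof is correct and follows the same route as the paper: apply Proposition~\ref{proposition_going_to_infinity} with $K = \Ball_q(0,1)$, after checking that $K$ satisfies the hypotheses and that $a \longmapsto P(\Hf, K+a)$ is smooth (in fact real analytic) near~$0$. The paper simply asserts the analyticity, whereas you supply a justification via the polar-coordinate representation of $\Vol(T\cap \Ball_q(a,1))$; the rest is identical.
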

\begin{proof}
We apply Proposition~\ref{proposition_going_to_infinity}
with $K=\Ball_q(0,1)$. In that case, the function
$a \longmapsto P(\Hf,a+K)$ is $C^{\infty}$ (and even real analytic) in $a$.
\end{proof}

\section{Surface volume}
\label{section_surface_volume}

We now change our discussion from measuring the volume
of the regions $K \cap T$ to measuring the surface volume
in the case when the convex body is a ball.
For an $n$-dimensional convex set~$X$,
let $\Vol_{n-1}(\partial X)$ denote the $(n-1)$-dimensional surface volume of the set~$X$.
\begin{theorem}
Assume that $\Hf$ is an $n$-dimensional hyperplane arrangement
such that the pizza quantity $P(\Hf, \Ball(a,R))$ does not depend on
the radius $R \geq \|a\|$.
Then the alternating sum of the surface volumes
of the regions $\Ball(a,R) \cap \overline{T}$
where $T$ ranges over all chambers of the arrangement~$\Hf$
is zero, that is,
\begin{align}
\sum_{T \in \Tf} (-1)^{T} \cdot \Vol_{n-1}(\partial(\Ball(a,R) \cap \overline{T}))
& = 0 .
\label{equation_surface_volume}
\end{align}
\label{theorem_surface_volume}
\end{theorem}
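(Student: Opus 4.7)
The plan is to split the boundary $\partial(\Ball(a,R)\cap\overline{T})$ into a spherical part and a collection of flat parts, show that the flat parts cancel in pairs under the alternating sum, and then identify the alternating sum of the spherical parts with $\frac{d}{dR}P(\Hf,\Ball(a,R))$, which vanishes by hypothesis.

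First I would fix $R\geq\|a\|$ and, for each chamber $T$, write (up to a set of $(n-1)$-dimensional measure zero)
\begin{align*}
\partial(\Ball(a,R)\cap\overline{T})
= \bigl(\partial\Ball(a,R)\cap\overline{T}\bigr)
\;\sqcup\;
\bigsqcup_{U}\bigl(\Ball(a,R)\cap\overline{U}\bigr),
\end{align*}
where $U$ ranges over the facets of the chamber $T$. Each facet $U$ lies in a unique hyperplane $H_{e_U}$ of $\Hf$, and it is shared with exactly one other chamber $T'$ that lies on the opposite side of $H_{e_U}$. Hence $(-1)^{T'}=-(-1)^{T}$, and the contribution $\Vol_{n-1}(\Ball(a,R)\cap\overline{U})$ appears in the sum~\eqref{equation_surface_volume} once with sign $(-1)^T$ from $T$ and once with the opposite sign from $T'$. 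The flat parts thus cancel, reducing the left-hand side of~\eqref{equation_surface_volume} to
\begin{align*}
\sum_{T\in\Tf}(-1)^T\Vol_{n-1}\bigl(\partial\Ball(a,R)\cap\overline{T}\bigr).
\end{align*}

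Next I would interpret this spherical sum as a derivative. Using spherical coordinates centered at $a$ (equivalently, the coarea formula applied to the function $x\longmapsto\|x-a\|$), for each chamber $T$ one has
\begin{align*}
\Vol(\Ball(a,R)\cap\overline{T})
=\int_{0}^{R}\Vol_{n-1}\bigl(\partial\Ball(a,\rho)\cap\overline{T}\bigr)\,d\rho,
\end{align*}
so $\frac{d}{dR}\Vol(\Ball(a,R)\cap\overline{T})=\Vol_{n-1}(\partial\Ball(a,R)\cap\overline{T})$. Multiplying by $(-1)^T$ and summing over chambers gives
\begin{align*}
\frac{d}{dR}P(\Hf,\Ball(a,R))
=\sum_{T\in\Tf}(-1)^T\Vol_{n-1}\bigl(\partial\Ball(a,R)\cap\overline{T}\bigr).
\end{align*}

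Finally, the hypothesis asserts that $R\longmapsto P(\Hf,\Ball(a,R))$ is constant on $[\|a\|,\infty)$, so its derivative vanishes there, which together with the cancellation of the flat facets proves~\eqref{equation_surface_volume}. The only delicate points are the measure-zero identifications in the boundary decomposition (handled by the fact that facets of different chambers intersect in sets of dimension $\leq n-2$) and the legitimacy of the pairwise cancellation at facets where the ball meets the hyperplane only tangentially; these are routine and do not pose any genuine obstacle, so the argument is essentially a clean combination of the divergence-type observation already used in Theorem~\ref{theorem_reduction} with the elementary radial derivative of the ball volume.
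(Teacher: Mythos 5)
Your proof is correct and takes essentially the same approach as the paper: the paper likewise obtains $\sum_{T}(-1)^{T}\Vol_{n-1}(\Sss(a,R)\cap T)=0$ by differentiating the constant pizza quantity in $R$, and then observes that each flat facet contributes to the two adjacent chambers with opposite signs. You merely reorganize the argument (decomposing the boundary first, differentiating second) and spell out the radial derivative via the coarea formula, but the two key ideas are identical.
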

\begin{proof}
Since the ball $\Ball(a,R)$ grows uniformly in each direction as
$R$ increases, taking the derivative of the pizza quantity
with respect to $R$ yields
\begin{align*}
\sum_{T \in \Tf} (-1)^{T} \cdot \Vol_{n-1}(\Sss(a,R) \cap T)
& = 0 ,
\end{align*}
where $\Sss(a,R)$ denotes the $(n-1)$-dimensional sphere
having center $a$ and radius $R$,
that is, $\Sss(a,R) = \{x \in V : \| x-a \| = R \}$.
The result follows by observing that each subchamber
contributes its $(n-1)$-dimensional volume to
two terms in~\eqref{equation_surface_volume}
with opposite signs.
\end{proof}

Combining Theorem~\ref{theorem_almost_all_the_products}
with Theorem~\ref{theorem_surface_volume}
yields the following result.
\begin{theorem}
Suppose that $\Hf$ satisfies the parity condition and that it
contains a Coxeter arrangement $\Hf'$ whose Coxeter group preserves
$\Hf$ and such that $|\Hf'| \geq \dim(V)$.
Let $R \geq \|a\|$.
Then the alternating sum of the surface volumes
of the regions $\Ball(a,R) \cap \overline{T}$
where $T$ ranges over all chambers of the arrangement~$\Hf$
is zero, that is,
\begin{align*}
\sum_{T \in \Tf} (-1)^{T} \cdot \Vol_{n-1}(\partial(\Ball(a,R) \cap \overline{T}))
& = 0 .
\end{align*}
\label{theorem_surface_volume_two}
\end{theorem}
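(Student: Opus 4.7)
The plan is to combine the two cited results in a straightforward way. Set $q(x)=(x,x)$, the standard inner product quadratic form on $V$, so that $\Ball_{q}(a,R)=\Ball(a,R)$. Since any Coxeter group acts on $V$ by orthogonal transformations, the form $q$ is automatically invariant under the Coxeter group of $\Hf'$, so the hypotheses of Corollary~\ref{corollary_almost_all_the_products} are satisfied under the assumptions of the theorem.

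Next, I would split into the two cases provided by Corollary~\ref{corollary_almost_all_the_products}. If $|\Hf'|>\dim(V)$, part~(i) gives $P(\Hf,\Ball(a,R))=0$ for every $R\geq\|a\|$, which is trivially constant in $R$. If $|\Hf'|=\dim(V)$, part~(ii) gives
\begin{align*}
P(\Hf,\Ball(a,R)) & = c\cdot\prod_{e\in E'}(a,e),
\end{align*}
which is again independent of~$R$ in the range $R\geq\|a\|$. In either case, the hypothesis of Theorem~\ref{theorem_surface_volume} holds for the arrangement~$\Hf$ and the center~$a$.

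Finally, apply Theorem~\ref{theorem_surface_volume} to conclude that
\begin{align*}
\sum_{T\in\Tf}(-1)^{T}\cdot\Vol_{n-1}(\partial(\Ball(a,R)\cap\overline{T})) & = 0.
\end{align*}
There is essentially no obstacle here; the content of the theorem is entirely contained in the two previously established results, and the only thing to check is that their hypotheses match those we are given. In particular, the $W$-invariance of $q(x)=(x,x)$ is immediate, and the fact that ``$P(\Hf,\Ball(a,R))$ is independent of $R$'' in Theorem~\ref{theorem_surface_volume} covers both the vanishing case and the ``constant function of~$R$'' case uniformly.
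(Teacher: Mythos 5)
Your proof is correct and takes exactly the approach the paper indicates, namely combining Corollary~\ref{corollary_almost_all_the_products} (with $q(x)=(x,x)$, whose $W$-invariance is automatic) to get $R$-independence of $P(\Hf,\Ball(a,R))$, then applying Theorem~\ref{theorem_surface_volume}. The paper gives no further detail beyond this combination, so your proposal matches it.
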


\begin{remark}
{\rm
If $\Hf$ is the product of the trivial arrangement on $\Rrr$ and
of a type~$A_{2}$ or $I_{2}(2k+1)$ with $k\geq 2$ arrangement
on~$\Rrr^{2}$ then
Theorem~\ref{theorem_surface_volume_two} is 
Confection~2 on page~433 of~\cite{Mabry_Deiermann}. If $\Hf$
is a type $I_{2}(2k)$ for $k\geq 2$ arrangement on $\Rrr^{2}$ then
Theorem~\ref{theorem_surface_volume_two} is the ``$N$ even'' part of
Confection~3 on page~434 of~\cite{Mabry_Deiermann}.
}
\end{remark}

\begin{remark}
{\rm
We can also apply Theorem~\ref{theorem_surface_volume} to all the even
hyperplanes arrangements for which a ball centered at the origin is
sufficiently symmetric. See for example the arrangement of 
Example~\ref{example_2d_arrangement}. Also,
Theorem~\ref{theorem_surface_volume_two}
applies to the arrangement of Example~\ref{example_3d_arrangement}. 
}
\end{remark}

\section{Concluding remarks}
\label{section_concluding_remarks}

The $n$-dimensional volume and the $(n-1)$-dimensional
surface volume are both examples of intrinsic volumes;
see~\cite{Klain_Rota,Schneider}.
In the paper~\cite{Ehrenborg_Morel_Readdy2}, we have generalized
Theorem~\ref{theorem_first_in_introduction} to all intrinsic volumes, and
more generally, to all valuations on convex subsets of $\Rrr^n$ that are
invariant by affine isometries.  The methods we develop are different from the ones
used here. However, it is still an open question whether
Theorems~\ref{theorem_second_in_introduction}
and~\ref{theorem_surface_volume}
can be generalized to all intrinsic volumes.
Naturally, the result is true for the $0$th intrinsic volume,
that is, the Euler characteristic.

Another generalization of the Pizza Theorem
is to consider the problem of sharing pizza
among more than two people.
We can use Theorem~\ref{theorem_more_polynomial_pizza_by_sector}
to produce such pizza-sharing results for the ball.  The general idea is that
if we consider a sum of terms as in that theorem that has enough symmetries,
we will be able to show that it vanishes by the method of
Proposition~\ref{proposition_polynomial_Coxeter_arrangement}
and Theorem~\ref{theorem_almost_all_the_products}.
We state one such result in dimension $2$, where we can eliminate 
the assumption that the pizza is a disc.

\begin{proposition}
Let $\Hf$ be a Coxeter arrangement of type $I_{2}(k)$ in the plane $V$,
and let $p<k$ be a positive integer dividing $k$.
Let $K$ be a measurable subset of $V$ with finite volume
that is stable by the Coxeter group $W$ of $\Hf$.
If $k$ is odd, or if $k=2p$ with $p$ odd, we also suppose that
$K$ is stable by the Coxeter group of the Coxeter arrangement of
type $I_2(2k)$ containing $\Hf$.
Let $T_{0}, T_{1}, \ldots, T_{2k-1}$ be the chambers of $\Hf$.
For $a\in V$ such that $K$ contains the convex hull of
the set $\{w(a) : w\in W\}$,\footnote{This condition holds for example
if $K$ is convex and $0\in K+a$.}
the following sum is independent of $0 \leq r \leq p-1$:
\[
\sum_{i=0}^{2k/p-1} \Vol(T_{r+pi} \cap (K+a))
=
\Vol(K)/p .
\]
\label{proposition_k_people}
\end{proposition}
In short, $p$ people can share a pizza and have $2k/p$ slices each.

\begin{proof}[Proof of Proposition~\ref{proposition_k_people}.]
If $T$ is a chamber of the arrangement $\Hf$
and $K$ is a measurable subset with finite volume, we deduce 
from point~(i) of Theorem~\ref{theorem_reduction} that
\begin{align}
\nonumber
&
\Vol(T\cap(K+a)) + \Vol((-T)\cap(K+a)) - (\Vol(T\cap K) + \Vol(-T\cap K)) \\
= \: &
(a,e) \int_0^1 P(\Hf_e, (K+ta) \cap H_e) \: dt
+
(a,e') \int_0^1 P(\Hf_{e'}, (K+ta) \cap H_{e'}) \: dt,
\label{equation_K_four}
\end{align}
where $e,e'\in E$ are such that the boundaries of $T$ and $-T$ are
contained in $H_e\cup H_{e'}$. If $K$ satisfies the conditions
of the proposition, then for every $e\in E$
the intersection $(K+a-\pi_e(a))\cap H_e$ 
is centrally symmetric in $H_e$ and contains the interval
$[-\pi_e(a),\pi_e(a)]$. 
(To see that $(K+a-\pi_e(a))\cap H_e$ is centrally symmetric, we use
the fact that $K$ is stable under the orthogonal reflection in the line
perpendicular to $H_e$. If $k$ is even, this line is part of the arrangement
$\Hf$, but if $k$ is odd, it is only part of the larger arrangement
of type $I_2(2k)$.  This is why we assume that $K$ is stable under
the Coxeter group of that larger arrangement.)
This implies that there exists a linear
function $g_e:H_{e} \longrightarrow \R$ such that
$P(\Hf_e,(K+a)\cap H_e)=g_e(\pi_e(a))$ 
for all $K$ and $a$ satisfying
the conditions on the proposition.
So we deduce that the right-hand side of~\eqref{equation_K_four}
is a polynomial homogeneous of degree~$2$ in~$a$ that does not
depend on~$K$, as long as $K$ contains the convex hull of the set
$\{w(a) : w\in W\}$.

Recall that the chambers are labelled $T_{0}, T_{1}, \ldots, T_{2k-1}$,
where the index is modulo $2k$.
Note that $-T_i=T_{i+k}$.
Let $\ell_{i}$ be the line that borders the chambers $T_{i}$ and $T_{i+1}$;
hence $\ell_{i}$ also borders $T_{i+k}$ and $T_{i+k+1}$. We take
the index $i$ of the lines to be an integer modulo $k$.
For every $i\in\Z/k\Z$ and every $j\in\Z/2k\Z$, the orthogonal reflection
in the line $\ell_i$ sends $T_j$ to $T_{2i+1-j}$.

For $0\leq r\leq p-1$ define
\[S_r(K+a)=\sum_{i=0}^{2k/p-1}\Vol(T_{r+pi}\cap(K+a)).\]
By equation~\eqref{equation_K_four}, for every $r$,
there exists a homogeneous polynomial $f_r: V \longrightarrow \R$
of degree $2$ such that $S_r(K+a)-S_r(K)=f_r(a)$ 
for all $K$ and $a$ satisfying the conditions in the proposition.
Fix $0 \leq r \leq p-2$.
For every $0 \leq j \leq k/p-1$ the orthogonal reflection in the line $\ell_{r+jp}$ sends
$\bigcup_{i=0}^{2k/p-1}T_{r+pi}$ to
$\bigcup_{i=0}^{2k/p-1}T_{r+1+pi}$, so $S_r(K+a)=
S_{r+1}(K+a)$ if $a\in\ell_{r+jp}$. We deduce that the polynomial
$f_{r}-f_{r+1}$ vanishes on $\bigcup_{j=0}^{k/p-1}\ell_{r+jp}$. If
$k/p\geq 3$ then this union contains at least three lines.
As the polynomial $f_{r}-f_{r+1}$ is homogeneous of degree $2$, it
has to be zero, so $S_r(K+a)-S_r(K)=S_{r+1}(K+a)-S_{r+1}(K)$ for
all $K$ and $a$ satisfying the condition of the proposition. As
$S_r(K)=S_{r+1}(K)$ because $K$ is stable by $W$, we finally obtain
that $S_r(K+a)=S_{r+1}(K+a)$.

We finally consider the case where $k/p=2$, that is, $k=2p$.
If $p$ is even then the orthogonal reflection in the line
$\ell_{r+p/2}$ also send 
$\bigcup_{i=0}^{2k/p-1}T_{r+pi}$ to
$\bigcup_{i=0}^{2k/p-1}T_{r+1+pi}$, so the polynomial 
$f_r-f_{r+1}$ vanishes on the line $\ell_{r+p/2}$, and we can
again deduce that $f_r-f_{r+1}=0$ and that $S_r(K+a)=S_{r+1}(K+a)$.
If $p$ is odd then the orthogonal reflection in the line $\ell$ bisecting
$\ell_{r+(p-1)/2}$ and $\ell_{r+(p+1)/2}$ sends
$\bigcup_{i=0}^{2k/p-1}T_{r+pi}$ to
$\bigcup_{i=0}^{2k/p-1}T_{r+1+pi}$. The line $\ell$ is part of the
arrangement $I_2(2k)$ containing $\Hf$,
since we assumed that $K$ is stable
by the Coxeter group of this arrangement, we obtain that
the polynomial $f_r-f_{r+1}$ vanishes on $\ell$, and then we deduce
as before that $S_r(K+a)=S_{r+1}(K+a)$.

\end{proof}

If $K$ is a disc centered at $0$ and $k=2p$ then
Proposition~\ref{proposition_k_people}
recovers the following result:

\begin{corollary}
[J., M.\ D., J.\ K., A.\ D. and P.\ M.\ Hirschhorn~\cite{Hirschhorn_times_five}]
Cut a disc with $2p$ lines
through a point in the disc such the lines are equally spaced.
Then $p$ people can have $4$ slices each so that
they each have the same amount of pizza.
\label{corollary_Hirschhorn_Hirschhorn_Hirschhorn_Hirschhorn_Hirschhorn}
\end{corollary}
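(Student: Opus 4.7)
The plan is to obtain this as a direct specialization of Proposition~\ref{proposition_k_people} in the case $k=2p$, with $K$ taken to be a disc centered at the origin. First I would translate coordinates so that the common intersection point of the $2p$ equally spaced lines is the origin of $V=\Rrr^{2}$. Then $2p$ equally spaced lines through the origin is by definition the Coxeter arrangement $\Hf$ of type $I_{2}(2p) = I_{2}(k)$ with $k=2p$, and it partitions the plane into $4p = 2k$ chambers $T_{0},T_{1},\ldots,T_{2k-1}$ in cyclic order.

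Second, I would write the original disc as $K+a$, where $K=\mathbb{B}(0,R)$ is the disc of the same radius centered at the origin and $a\in V$ is the vector from the common intersection point (the new origin) to the center of the original disc. Since the intersection point is \emph{inside} the disc, $\|a\|\leq R$, so $a$ (and therefore the whole finite set $\{w(a) : w\in W\}$, which is an orbit of norm $\|a\|$) lies in $K$; by convexity of $K$ the convex hull hypothesis of the proposition is satisfied. Because $K$ is rotationally symmetric about the origin, it is stable not only under the Coxeter group $W$ of $\Hf=I_{2}(2p)$ but in fact under every orthogonal transformation of $V$; in particular it is stable under the Coxeter group of $I_{2}(2k)=I_{2}(4p)$, which takes care of the extra symmetry hypothesis required in the case $k=2p$ with $p$ odd.

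Third, I would apply Proposition~\ref{proposition_k_people} with this data. Since $2k/p=4$, it asserts that for every $0\leq r\leq p-1$,
\[
\sum_{i=0}^{3}\Vol(T_{r+pi}\cap(K+a)) \;=\; \Vol(K)/p.
\]
Interpreting this combinatorially, for each residue class $r\in\{0,1,\ldots,p-1\}$ mod $p$, the four chambers $T_{r},T_{r+p},T_{r+2p},T_{r+3p}$ intersect the disc in a total area of $\Vol(K)/p$; assigning the $r$-th residue class to the $r$-th person gives $p$ people, each receiving exactly $4$ slices and each receiving area $\Vol(K)/p$, as required. Since we are merely invoking the proposition, no new computation is needed; the only point requiring a little care is the verification of the extra stability hypothesis when $p$ is odd, which is immediate here because a disc centered at the origin is fully rotationally invariant.
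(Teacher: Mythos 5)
Your proof is correct and follows exactly the route the paper takes: the paper states that the corollary is recovered from Proposition~\ref{proposition_k_people} by taking $k=2p$ and $K$ a disc centered at the origin, and you have simply filled in the details (translating the common point to the origin, checking $\|a\|\leq R$ gives the convex hull hypothesis via the footnote, noting full rotational symmetry of the disc handles the extra $I_2(2k)$ stability requirement when $p$ is odd, and reading off $2k/p=4$ slices per person with each share $\Vol(K)/p$).
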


It is natural to ask if there are systematic generalizations of this kind
of result for higher-dimensional hyperplane arrangements
that give
ways to share a pizza equally between more than two people.
We present two such results in the following two remarks.

\begin{remark}
{\rm
Using an arrangement $\Hf$ of type $F_{4}$ in $\Rrr^4$ we can
divide a pizza evenly among $4$ people.
Let $W$ denote the Coxeter group generated by $\Hf$.
Let $a\in\Rrr^4$ and let $K$ be a measurable set of finite volume
that contains the convex hull of $\{w(a) : w \in W\}$.
Finally, let $L$ be the translation $K+a$.

We first note that we can write the arrangement $\Hf$
as a disjoint union $\Hf_{1} \sqcup \Hf_{2}$, where~$\Hf_{1}$ and~$\Hf_{2}$ are both of type $D_{4}$.
This partition of $\Hf$ corresponds to the long and short roots
in the crystallographic root system $F_{4}$.
Now we can assign three signs
$\vec{s}(T) = (s,s_{1},s_{2})$ to a chamber $T \in \Tf(\Hf)$.
First let $s$ be $(-1)^{T}$ where the sign is computed
with respect to the arrangement~$\Hf$.
For $i=1,2$ let $T^{(i)}$ be the unique chamber
in $\Tf(\Hf_{i})$ that contains $T$.
Let $s_{i}$ be the sign $(-1)^{T^{(i)}}$ with respect to~$\Hf_{i}$.
Observe that $s = s_{1} \cdot s_{2}$ holds since 
the three separation sets satisfy
$S_{\Hf}(T_{0},T)
=
S_{\Hf_{1}}(T_{0}^{(1)},T^{(1)})
\sqcup
S_{\Hf_{2}}(T_{0}^{(2)},T^{(2)})$.
Hence there are only four possible sign patterns for
$\vec{s}(T)$, which are the elements of
$P = \{(1,1,1), (1,-1,-1), (-1,1,-1), (-1,-1,1)\}$.
We use the set $P$ to label the four people sharing the pizza.
For $p \in P$ let $V_{p}$ be the amount of pizza
that person $p$ receives, that is,
$$
V_{p}
=
\sum_{\substack{T \in \Tf(\Hf) \\ \vec{s}(T) = p}} \Vol(T \cap L) .
$$
Since the hyperplane $\Hf$ satisfies Theorem~\ref{theorem_amazing}, we obtain
\begin{align*}
V_{(1,1,1)}
+
V_{(1,-1,-1)}
& =
V_{(-1,1,-1)}
+
V_{(-1,-1,1)}
=
\Vol(L)/2 .
\end{align*}
Similarly, since $\Hf_{1}$ and $\Hf_{2}$ also satisfy Theorem~\ref{theorem_amazing},
the following two equalities hold:
\begin{align*}
V_{(1,1,1)}
+
V_{(-1,1,-1)}
& =
V_{(1,-1,-1)}
+
V_{(-1,-1,1)}
=
\Vol(L)/2 , \\
V_{(1,1,1)}
+
V_{(-1,-1,1)}
& =
V_{(1,-1,-1)}
+
V_{(-1,1,-1)}
=
\Vol(L)/2 .
\end{align*}
Solving this linear equation system yields
\begin{align*}
V_{(1,1,1)}
& =
V_{(1,-1,-1)}
=
V_{(-1,1,-1)}
=
V_{(-1,-1,1)}
=
\Vol(L)/4 .
\end{align*}
}
\label{remark_F_4}
\end{remark}

\begin{remark}
{\rm
Let $n \geq 2$ be even and
let $\Hf$ be an arrangement of type~$B_{n}$ in $\Rrr^n$.
Using the same idea as the previous remark,
we can divide the boundary (crust) of an $n$-dimensional ball
$\Ball(a,R)$, where $\|a\| \leq R$, evenly among four people.
Note that the arrangement $\Hf$ can be written as the
disjoint union of two Coxeter arrangements of types~$D_{n}$ and~$A_{1}^{n}$ for $n \geq 4$
and a disjoint union of two arrangements of type $A_{1}^{2}$ when $n=2$.
Again, we can assign three signs to each chamber, and
we obtain one of the four sign patterns
in the set $P$.
Let~$S_{p}$ denote the sum
\begin{align*}
S_{p}
& =
\sum_{\substack{T \in \Tf(\Hf) \\ \vec{s}(T) = p}}
\Vol_{n-1}(T \cap \Sss(a,R)) .
\end{align*}
By using Theorem~\ref{theorem_surface_volume}
and
reasoning similar to that of Remark~\ref{remark_F_4},
we obtain 
\begin{align*}
S_{(1,1,1)}
& =
S_{(1,-1,-1)}
=
S_{(-1,1,-1)}
=
S_{(-1,-1,1)}
=
\Vol_{n-1}(\Sss(a,R))/4 .
\end{align*}
}
\end{remark}

Finally, let us end with a conjecture about the type~$A$ arrangement.
The $A_{n}$ arrangement lies inside
the $n$-dimensional space
$\{(x_{1},x_{2}, \ldots,x_{n+1}) : x_{1} + x_{2} + \cdots + x_{n+1} = 0\}$
and consists of the $\binom{n+1}{2}$ hyperplanes
$x_{i} = x_{j}$ where $1 \leq i < j \leq n+1$.

\begin{conjecture}
Let $\Hf$ be a hyperplane arrangement of type~$A_{n}$ with
$n \equiv 2,3 \bmod 4$, and let $a\in V$ such that $\|a\|\leq R$.
Then the pizza quantity~$P(\Hf,\Ball(a,R))$ 
is zero if and only if the point~$a$ lies
on one of the hyperplanes of~$\Hf$.
Furthermore, when the point~$a$ belongs 
to the interior of a chamber $T$ of the arrangement,
the sign of the pizza quantity~$P(\Hf,\Ball(a,R))$
is $(-1)^{T}$, that is, the sign of the chamber~$T$.
\end{conjecture}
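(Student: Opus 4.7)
The plan breaks into three stages. First, the vanishing direction: if $a$ lies on some hyperplane $H\in\Hf$, then the ball $\Ball(a,R)$ is invariant under the orthogonal reflection $s$ in $H$; since $s\in W=S_{n+1}$ has $\det(s)=-1$, Corollary~\ref{corollary_Coxeter_on_a_hyperplane} applied to $f=\ungras_{\Ball(a,R)}$ gives $P(\Hf,\Ball(a,R))=0$. Second, reduction of the sign statement: the map $a\longmapsto P(\Hf,\Ball(a,R))$ is $L^{1}$-continuous, and Proposition~\ref{proposition_equivariance_pizza_quantity} combined with $\Ball(w(a),R)=w(\Ball(a,R))$ yields $P(\Hf,\Ball(w(a),R))=\det(w)\cdot P(\Hf,\Ball(a,R))$ for every $w\in W$. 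Since the sign of the chamber $w(T_{0})$ equals $\det(w)$, the full conjecture reduces to showing $P(\Hf,\Ball(a,R))>0$ whenever $a$ lies in the interior of the base chamber $T_{0}$ and $R\geq\|a\|$.

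As a warm-up I would settle $n=2$ by direct calculation. Applying Theorem~\ref{theorem_reduction}(ii) and using that each even restriction $\Hf_{e}$ for $A_{2}$ is the empty arrangement on the line $H_{e}$, one obtains
\[P(A_{2},\Ball(a,R))=4\bigl(F(\bl{a}{e_{12}})-F(\bl{a}{e_{13}})+F(\bl{a}{e_{23}})\bigr),\]
where $F(b)=\int_{0}^{b}\sqrt{R^{2}-u^{2}}\,du$ and the signs arise from $(-1)^{Z_{0}(e)\circ e}$. The standard realization satisfies the root identity $e_{13}=e_{12}+e_{23}$, so $\bl{a}{e_{13}}=\bl{a}{e_{12}}+\bl{a}{e_{23}}$. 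On the interior of $T_{0}$ all three inner products are strictly positive, and the strict concavity of $F$ together with $F(0)=0$ forces the strict subadditivity $F(\bl{a}{e_{12}})+F(\bl{a}{e_{23}})>F(\bl{a}{e_{13}})$, establishing positivity.

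For general $n\equiv 2,3 \bmod 4$ my plan is to induct via Theorem~\ref{theorem_reduction}(ii). Each even restriction $\Hf_{e}$ of $A_{n}$ is a non-essential copy of $A_{n-2}$ in the hyperplane $H_{e}$ with a one-dimensional kernel along which the arrangement is trivial; essentializing and integrating over this kernel direction expresses $P(\Hf_{e},\Ball_{H_{e}}(b,r))$ as an integral of the essential $A_{n-2}$ pizza quantity applied to balls of varying radius. Iterating this reduction yields a nested signed integral representation of $P(A_{n},\Ball(a,R))$ indexed by chains of pairwise-orthogonal roots. The main obstacle is showing that this signed multi-integral is strictly positive on the interior of $T_{0}$: this demands a higher-dimensional analogue of the subadditivity step, exploiting the linear relations among positive roots of $A_{n}$ governed by the braid matroid, presumably combined with a log-concavity or Brunn--Minkowski-type inequality applied slice-by-slice to the chord-length integrands.

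An alternative would be to work directly with the Vandermonde representation
\[P(A_{n},\Ball(a,R))=\int_{\Ball(a,R)}\operatorname{sgn}\Bigl(\prod_{i<j}(x_{i}-x_{j})\Bigr)\,dV,\]
and attempt a rearrangement argument pairing each odd-signed Weyl image $w(a)$ with a neighbor obtained from $w$ by an adjacent simple transposition, arguing that the volume $\Vol(\Ball(w(a),R)\cap T_{0})$ strictly decreases under such a swap when $a\in\mathring{T_{0}}$. Either route leads to the same fundamental difficulty: converting the $S_{n+1}$-alternating structure of the problem into a positivity statement for an iterated integral involving the radial function $\sqrt{R^{2}-\|x\|^{2}}$. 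I expect this positivity of the signed multi-integral, rather than any of the bookkeeping steps, to be the genuine crux of the proof.
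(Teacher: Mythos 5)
This statement is a \emph{conjecture} in the paper; the authors offer no proof, only a remark that the $n=2$ case (three lines) is Theorem~1 of Mabry--Deiermann. So there is no paper argument for you to match, and the question is whether your proposal actually constitutes a proof. It does not, and you say so yourself: the concluding paragraph concedes that the positivity of the signed multi-integral ``rather than any of the bookkeeping steps'' is ``the genuine crux,'' and that crux is left open.

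What you do have is correct and worth recording. The vanishing when $a\in H$ is immediate from Corollary~\ref{corollary_Coxeter_on_a_hyperplane} with Remark~\ref{remark_stable}. The equivariance $P(\Hf,\Ball(w(a),R))=\det(w)\,P(\Hf,\Ball(a,R))$ together with $(-1)^{w(T_0)}=\det(w)$ is exactly right, and it legitimately reduces the full ``if and only if'' plus sign statement to showing $P(\Hf,\Ball(a,R))>0$ for $a$ in the interior of $T_0$. Your $A_2$ computation is correct: the even restrictions are all empty (the triple point has odd intersection multiplicity $3$), the signs $(+,-,+)$ check out against the base chamber $x_1>x_2>x_3$, the relation $e_{13}=e_{12}+e_{23}$ holds even after normalizing to unit vectors because all roots of $A_2$ have the same length, the constraint $\|a\|\leq R$ guarantees the arguments of $F$ stay in $[0,R]$, and strict concavity of $F$ with $F(0)=0$ gives strict subadditivity. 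This re-derives the known $2$-dimensional result.

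The gap is everything from $n\geq 3$ onward. Note first that for $n\equiv 2,3\bmod 4$ the arrangement $A_n$ does \emph{not} satisfy the parity condition (e.g.\ $|A_3|=6$ vs.\ $\dim=3$), so the polynomial machinery of Section~\ref{section_ball} gives you nothing; the pizza quantity is genuinely transcendental in $(a,R)$, as you can already see from $F$ in the $A_2$ case. Your proposed iteration of Theorem~\ref{theorem_reduction}(ii) through the non-essential $A_{n-2}$ even restrictions is plausible bookkeeping, but it produces a signed sum of nested integrals whose positivity is not a formal consequence of concavity of a single function; the $A_2$ trick uses a single linear relation among three roots, and there is no evident higher-rank analogue that you supply. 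Similarly, the ``adjacent-transposition pairing'' in the Vandermonde picture is only a heuristic: you would need a monotonicity statement for $\Vol(\Ball(w(a),R)\cap T_0)$ under a simple swap, and you neither state nor prove one. So the proposal is an honest research plan, not a proof; the conjecture remains open after your argument, exactly as the paper leaves it.
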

This conjecture 
is true in dimension~$2$;
see Theorem~1 in~\cite{Mabry_Deiermann}
in the case of $3$ lines.
What can be said about the other
irreducible Coxeter arrangements,
that is, type~$D_{n}$ where $n$ is odd?

\section*{Acknowledgements}

The authors thank Dominik Schmid
for introducing us to the Pizza Theorem.
We also thank Theodore~Ehrenborg and the referee
for their comments on the manuscript.
This work was partially 
supported by the LABEX MILYON (ANR-10-LABX-0070) of Universit\'e 
de Lyon, within the program ``Investissements d'Avenir'' (ANR-11-IDEX-0007)
operated by the French National Research Agency (ANR),
and by Princeton University.
The third author also thanks Princeton University
for hosting a one-week visit in Spring 2020,
and the second author thanks the University of
Kentucky for its hospitality during a one-week visit in Fall 2019.
This work was also partially supported by grants from the
Simons Foundation
(\#429370 to Richard~Ehrenborg
and \#422467 to Margaret~Readdy).

\newcommand{\journal}[6]{{\sc #1,} #2, {\it #3} {\bf #4} (#5), #6.}
\newcommand{\book}[4]{{\sc #1,} #2, {\it #3,} #4.}
\newcommand{\bookf}[5]{{\sc #1,} #2, {\it #3,} {\it #4,} #5.}
\newcommand{\arxiv}[3]{{\sc #1,} #2, {\tt #3}.}
\newcommand{\preprint}[3]{{\sc #1,} #2, preprint {(#3)}.}
\newcommand{\preparation}[2]{{\sc #1,} #2, in preparation.}

{\small

}


\begin{thebibliography}{99}

\bibitem{Berzsenyi}
\journal{G.\ Berzsenyi}
            {The pizza theorem -- part II}
            {Quantum}
            {4(4)}{1994}{page 29}



\bibitem{OM}
{\sc A.\ Bj\"orner, M.\ Las Vergnas, B.\ Sturmfels,
N.\ White and G.\ M.\ Ziegler,}
Oriented matroids. Second edition,
Encyclopedia of Mathematics and its Applications, 46,
{\it Cambridge University Press, Cambridge,} 1999.

\bibitem{Bourbaki}
{\sc N.\ Bourbaki,}
{\'{E}l\'{e}ments de math\'{e}matique. {F}asc. {XXXIV}. {G}roupes et
              alg\`ebres de {L}ie. {C}hapitre~{IV}: {G}roupes de {C}oxeter et
              syst\`emes de {T}its. {C}hapitre~{V}: {G}roupes engendr\'{e}s par
              des r\'{e}flexions. {C}hapitre~{VI}: syst\`emes de racines},
{Actualit\'{e}s Scientifiques et Industrielles, No. 1337},
{\it Hermann, Paris},
{1968}.

\bibitem{Carter_Wagon}
\journal{L.\ Carter and S.\ Wagon}
            {Proof without words: Fair allocation of a pizza}
            {Math.\ Mag.}
            {67}{1994}{page 267} 

\bibitem{Ehrenborg_Morel_Readdy}
\preprint{R.\ Ehrenborg, S.\ Morel and M.\ Readdy}
             {A generalization of combinatorial identities for stable discrete series constants}
             {2021}
             arXiv:1912.00506 (to appear in the Journal of Combinatorial Algebra)

\bibitem{Ehrenborg_Morel_Readdy2}
\preprint{R.\ Ehrenborg, S.\ Morel and M.\ Readdy}
             {Pizza and $2$-structures}
             {2021}
             arXiv:2105.07288


\bibitem{Frederickson}
\journal{G.\ N.\ Frederickson}
            {The proof is in the pizza}
            {Math.\ Mag.}
            {85}{2012}{26--33}

\bibitem{Goldberg}
\journal{M.\ Goldberg}
            {Problem 660}
            {Math.\ Mag.}
            {42}{1968}{page 46}

\bibitem{Herb-DSC}
\journal{R. Herb}
            {Discrete series characters as lifts from two-structure groups}
            {Trans. Amer. Math. Soc.}
            {353 (7)}{2001}{2557--2599}

\bibitem{Hirschhorn_times_five}
\journal{J.\ Hirschhorn, M.\ D.\ Hirschhorn, 
             J.\ K.\ Hirschhorn, A.\ D.\ Hirschhorn
             and P.\ M.\ Hirschhorn}
            {The pizza theorem}
            {Austral.\ Math.\ Soc.\ Gaz.}
            {26}{1999}{120--121}

\bibitem{Humphreys}
\bookf{J.\ E.\ Humphreys}
         {Reflection groups and Coxeter groups}
         {Cambridge University Press}
         {Cambridge}
         {1990}
            
\bibitem{Klain_Rota}
\book{D.\ Klain and G.-C.\ Rota}
         {Introduction to geometric probability}
         {Cambridge University Press, Cambridge}
         {1997}

\bibitem{Mabry_Deiermann}
\journal{R.\ Mabry and P.\ Deiermann}
            {Of cheese and crust: a proof of the pizza conjecture and other tasty results}
            {Amer.\ Math.\ Monthly}
            {116}{2009}{423--438}

\bibitem{Schneider}
\book{R.\ Schneider}
          {Convex bodies: the Brunn-Minkowski theory. Second expanded edition,
           Encyclopedia of Mathematics and its Applications, 151}
          {Cambridge University Press, Cambridge}
          {2014}

\bibitem{Stanley_EC_I}
\book{R.\ P.\ Stanley}
     {Enumerative combinatorics. Volume 1. Second edition}
     {Cambridge University Press, Cambridge}
     {2012}

\bibitem{Steinberg}
\journal{R.\ Steinberg}
            {Differential equations invariant under finite reflection groups}
            {Trans. Amer. Math. Soc.}
            {112}{1964}{392--400}

\bibitem{Upton}
\journal{L.\ J.\ Upton}
            {Problem 660}
            {Math.\ Mag.}
            {41}{1967}{page 163}

\end{thebibliography}
\end{document}